\tikzstyle{vertex}=[circle, draw, inner sep=0pt, minimum size=6pt]
\newcommand{\vertex}{\node[vertex]}
\def\NZQ{\Bbb}               
\def\ZZ{{\NZQ Z}}
\def\B'c{{\mathcal{B'}}}
\def\U'c{{\mathcal{U'}}}
\def\opn#1#2{\def#1{\operatorname{#2}}} 
\opn\chara{char}
\opn\length{\ell}
\opn\projdim{proj\,dim}
\opn\injdim{inj\,dim}
\opn\ini{in}
\opn\rank{rank}
\opn\depth{depth}
\opn\sdepth{sdepth}
\opn\indmat{indmat}
\opn\cochord{cochord}
\opn\pdim{pdim}
\opn\height{ht}
\opn\embdim{emb\,dim}
\opn\codim{codim}
\opn\Tr{Tr}
\opn\bigrank{big\,rank}
\opn\superheight{superheight}\opn\lcm{lcm}
\opn\trdeg{tr\,deg}%
\opn\reg{reg}
\opn\lreg{lreg}
\opn\set{set}
\opn\supp{Supp}
\opn\shad{Shad}
\opn\div{div}
\opn\Div{Div}
\opn\cl{cl}
\opn\Cl{Cl}
\opn\Spec{Spec}
\opn\Supp{Supp}
\opn\supp{supp}
\opn\Sing{Sing}
\opn\Ass{Ass}
\opn\Min{Min}
\opn\size{size}
\opn\bigsize{bigsize}
\opn\lex{lex}
\opn\Ann{Ann}
\opn\Rad{Rad}
\opn\Soc{Soc}
\opn\Ker{Ker}
\opn\Coker{Coker}
\opn\Im{Im}
\opn\Hom{Hom}
\opn\Tor{Tor}
\opn\Ext{Ext}
\opn\End{End}
\opn\Aut{Aut}
\opn\id{id}
\opn\nat{nat}
\opn\GL{GL}
\opn\SL{SL}
\opn\mod{mod}
\opn\ord{ord}
\opn\aff{aff}
\opn\con{conv}
\opn\relint{relint}
\opn\st{st}
\opn\lk{lk}
\opn\cn{cn}
\opn\core{core}
\opn\vol{vol}
\opn\gr{gr}
\def\pot#1#2{#1[\kern-0.28ex[#2]\kern-0.28ex]}
\opn\dirlim{\underrightarrow{\lim}}
\opn\invlim{\underleftarrow{\lim}}
\let\tensor=\otimes
\def\pnt{{\raise0.5mm\hbox{\large\bf.}}}
\def\Implies{\ifmmode\Longrightarrow \else
	\unskip${}\Longrightarrow{}$\ignorespaces\fi}
\def\implies{\ifmmode\Rightarrow \else
	\unskip${}\Rightarrow{}$\ignorespaces\fi}
\def\iff{\ifmmode\Longleftrightarrow \else
	\unskip${}\Longleftrightarrow{}$\ignorespaces\fi}
\newtheorem{Theorem}{Theorem}[section]
\newtheorem{Lemma}[Theorem]{Lemma}
\newtheorem{Corollary}[Theorem]{Corollary}
\newtheorem{Proposition}[Theorem]{Proposition}
\newtheorem{Remark}[Theorem]{Remark}
\let\epsilon=\varepsilon
\let\phi=\varphi
\let\kappa=\varkappa
\numberwithin{equation}{section}
\title{ Algebraic invariants of edge ideals of cubic circulant graphs}
\author[Bakhtawar Shaukat$^1$]{Bakhtawar Shaukat$^1$}
\author[Muhammad Ishaq$^1,*$]{Muhammad Ishaq$^{1,*}$}
\author[Ahtsham ul Haq$^1$]{Ahtsham ul Haq$^1$}
\author[Zahid Iqbal$^2$]{Zahid Iqbal$^2$}
\begin{document}
\maketitle
\begin{center}
   $^1$School of Natural Sciences, National University of Sciences and Technology Islamabad, Sector H-12, Islamabad Pakistan.\linebreak
     $^2$Department of Mathematics and Statistics, Institute of Southern Punjab, Multan, Pakistan.\linebreak
     $^*$ Corresponding email: \email{ishaq\_maths@yahoo.com}; Tel: +92-51-90855591\end{center}
\begin{abstract}
We obtain the exact values for depth and projective dimension and lower bounds for Stanley depth of the quotient rings of the edge ideals associated with all cubic circulant graphs.\\ \\ 
\textbf{Keywords:} Monomial ideal, depth; Stanley depth; projective dimension; cubic circulant graphs. \\
\textbf{2010 Mathematics Subject Classification:} Primary: 13C15; Secondary: 13F20; 05C38; 05E99.

\end{abstract}
\section{Introduction}

Let $K$ be a field and $S=K[x_1, \dots , x_q]$ be the polynomial ring over $K$ with standard grading. Let $\mathcal{M}$ be a finitely generated graded $S$-module. Suppose that $\mathcal{M}$ admits the following minimal free resolution:
	$$0\longrightarrow\ \bigoplus_{j\in \ZZ} S(-j)^{\beta_{r,j}(\mathcal{M})} \longrightarrow \bigoplus_{j\in \ZZ}  S(-j)^{\beta_{r-1,j}(\mathcal{M})}
	\longrightarrow \dots 	\longrightarrow\bigoplus_{j\in \ZZ}  S(-j)^{\beta_{0,j}(\mathcal{M})}\longrightarrow \mathcal{M} \longrightarrow\ 0.$$
 Let $\reg(\mathcal{M})$ denotes the \textit{Castelnuovo-Mumford regularity} (or simply \textit{regularity}) of $\mathcal{M}$. Then $$\reg(\mathcal{M})=\max\{j-i:\beta_{i,j}(\mathcal{M})\neq 0\}.$$ Let $\pdim(\mathcal{M})$ denotes the \textit{projective dimension} of $\mathcal{M}$. Then $$\pdim(\mathcal{M})=\max\{i:\beta_{i,j}(\mathcal{M})\neq 0\}.$$ For values, bounds and some other interesting results related to these two invariants we refer the readers to  \cite{regnpro, regul, bakht, circulent}. If $\mathfrak{m}:=(x_1,\dots,x_q)$ be the unique maximal graded ideal of $S$, then the \textit{depth} of $\mathcal{M}$ is defined to be the common length of all maximal $M$-sequences in $\mathfrak{m}$.
Let $\mathcal{M}$ be a finitely generated $\mathbb{Z}^q$-graded $S$-module. Let $uK[Z]$ be a $K$-subspace of $\mathcal{M}$ which is generated by all elements of the form $uy$ where $u$ is a homogeneous element in $\mathcal{M},$ $y$ is a monomial in $K[Z]$ and $Z\subseteq\{x_1,\dots,x_q\}$. If $uK[Z]$ is a free $K[Z]$-module then it is called a Stanley space of dimension $|Z|$. A decomposition $\mathcal{D}$ of $K$-vector space $\mathcal{M}$ as a finite direct sum of Stanley spaces is called a Stanley decomposition of $\mathcal{M}$. Let $\mathcal{D} \, : \, \mathcal{M} = \oplus_{j=1}^m u_j K[Z_j],$ 
	the Stanley depth of $\mathcal{D}$ is  $\sdepth(\mathcal{D})=\min\{|Z_j|:j=1,2,\dots,m\}$. The number
	$$\sdepth(\mathcal{M}):=\max \{\sdepth(\mathcal{D})\,:\mathcal{D}\text{ is a Stanley decomposition of} \, \mathcal{M}\},$$ is called the \textit{Stanley depth} of $\mathcal{M}$. Herzog et al. \cite{herz} gave a method to compute Stanley depth for modules of the type $I/J$, where $J\subset I$ are monomial ideals of $S$.   Recently, Ichim et al. \cite{Ichim2} gave method for computing Stanley depth of any finitely generated $\mathbb{Z}^{q}$-graded $S$-module. However, it is still a challenging task to compute the Stanley even by using these methods. Therefore, it is still important to give values and bounds for Stanley depth of modules. For some interesting results related to Stanley depth we refer the readers to \cite{MC8,MI,ZIA,KS,PFY}.  In 1982 Stanley conjectured in \cite{20} that $\sdepth({\mathcal{M}})\geq \depth({\mathcal{M}})$, this conjecture has been been proved in some special cases; see for instance \cite{hersurvey}. However, in 2016 Duval et al. \cite{21} showed that this conjecture is false for modules of the type $S/I$, where $I$ is a monomila ideal.

Let $G:=(V(G),E(G))$ be a graph with vertex set $V(G)=\{x_1,\dots,x_q\}$ and edge set $E(G).$ Throughout this work, all graphs are finite and simple. The \textit{edge ideal} $I(G)$ associated to $G$ is a squarefree monomial ideal, that is,
$I(G)=(x_{i}x_{j} : \{x_i, x_j\}\in E(G)).$  
 A graph $G$ on vertex set $\{x_1,\dots,x_q\}$ is said to be a \textit{path} of length $q-1$ if $E(G)=\{\{x_i,x_{i+1}\}:i\in\{1,\dots,q-1\}\}$. We denote the path of length $q-1$ is denoted by $\mathbb{P}_q.$ A graph $G$ on vertex set $\{x_1,\dots,x_q\}$ is said to be a \textit{cycle} of length $q$ if $E(G)=E(\mathbb{P}_q)\cup \{\{x_{q},x_1\}\}$. We denote the cycle of length $q$ by $\mathbb{C}_q.$ The \textit{degree} of a vertex of a graph $G$ is the number of edges that are incident to that vertex. A graph $G$ is said to be \textit{$q$-regular} if every vertex of $G$ has degree $q$. A graph $\mathcal{T}$ is said to be a \textit{tree} if there exists a unique path between any two vertices of $\mathcal{T}$.
 A vertex of degree $1$ of a graph is called a \textit{pendant vertex} (or \textit{leaf}).  An \textit{internal vertex} is a vertex that is not a leaf. Let $q\geq 2$, a tree with one internal vertex and $q-1$ leaves incident on it is called a \textit{$q$-star}, we denote a $q$-star by $\mathbb{S}_q$. 
A  graph in which every pair of vertices is connected by an edge is called a \textit{complete graph}. We denote a complete graph on $q$ vertices by $\mathbb{K}_{q}$.
 A vertex $x_j$ is called a \textit{neighbor} of a vertex $x_i$ in a graph $G$ if $\{x_i,x_j\}\in E(G).$ The \textit{neighborhood} of a vertex $x_i$ in a graph $G$ denoted by $N_G(x_i)$  is defined to be the set of all neighbors of $x_i.$  A graph $\mathcal{H}$ is said to be a \textit{subgraph} of a graph $G$, if $V(\mathcal{H}) \subseteq V(G)$ and $E(\mathcal{H}) \subseteq E(G)$. If  $\mathcal{H}$ is a subgraph of $G,$ then $G$ is said to be a supergraph of  $\mathcal{H}.$

 Let $q\geq 2$ and  $\mathbb{S}$ be a subset of $\{1, \dots ,\lfloor\frac{q}{2}\rfloor\}.$ A \textit{circulant graph} $C_{q}(\mathbb{S})$ is a graph on the vertex set $\{x_{1}, \dots, x_{q}\}$ such that $\{x_{i}, x_{j} \} \in E(C_{q}(\mathbb{S}))$ if and only if $|i-j|$ or $q-|i-j|\in \mathbb{S}$. See Figure \ref{fig:c7(1,3)} for examples of circulant graphs. Since $\mathbb{C}_q=C_q(1)$ therefore circulant graphs are sometimes considered as generalized cycles. For convenience the graph $C_q(\{a_1,\dots,a_l\})$ is simply denoted by $C_q(a_1,\dots,a_l)$. A circulant graph $C_q(a_1,\dots,a_l)$ is $2l$-regular, except if $2a_l=q,$ in which case, it is $(2l-1)$-regular. As a consequence  $3$-regular circulant graphs have the form $C_{2n}(a,n)$ with $1\leq a\leq n.$   A 3-regular circulant graph is also called a cubic circulant graph.      
\noindent Several algebraic invariants and other  algebraic properties of the edge ideals of circulant graphs have already been studied; see for instance \cite{wellcover,betti, rinaldo,circulent,cohenmac}. 
 Circulant graphs have also applications in network theory \cite{net,mulnet}, in group theory \cite{isograph} and in the theory of designs and error-correcting codes \cite{error}. 
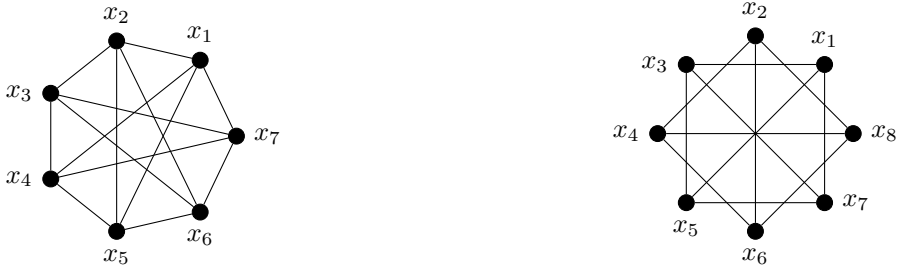
\begin{figure}[H]
	\centering
	\begin{subfigure}[b]{0.45\textwidth}
		\centering
	\[\begin{tikzpicture}[x=1.3cm, y=1.3cm]
		\vertex[fill] (c1) at (51:1) [label=above:$x_{1}$]{};
		\vertex[fill] (c2) at (103:1) [label=above:$x_{2}$]{};
		\vertex[fill] (c3) at (154:1) [label=left:$x_{3}$]{};
		\vertex[fill] (c4) at (206:1) [label=left:$x_{4}$]{};
		\vertex[fill] (c5) at (257:1) [label=below:$x_{5}$]{};
		\vertex[fill] (c6) at (309:1) [label=below:$x_{6}$]{};
		\vertex[fill] (c7) at (360:1) [label=right:$x_{7}$]{};
		\path 
		(c6) edge (c7)
		(c7) edge (c1)
		(c5) edge (c6)
		(c5) edge (c4)
		(c4) edge (c3)
		(c2) edge (c3)
		(c1) edge (c2)
		(c1) edge (c4)
		(c1) edge (c5)
		(c2) edge (c5)
		(c2) edge (c6)
		(c3) edge (c6)
		(c3) edge (c7)
		(c7) edge (c4)
		;
		\end{tikzpicture}\]
	\end{subfigure}
	\hfill
	\begin{subfigure}[b]{0.45\textwidth}
				\centering
	\[\begin{tikzpicture}[x=1.3cm, y=1.3cm]
		\vertex[fill] (c1) at (45:1) [label=above:$x_{1}$]{};
		\vertex[fill] (c2) at (90:1) [label=above:$x_{2}$]{};
		\vertex[fill] (c3) at (135:1) [label=left:$x_{3}$]{};
		\vertex[fill] (c4) at (180:1) [label=left:$x_{4}$]{};
		\vertex[fill] (c5) at (225:1) [label=below:$x_{5}$]{};
		\vertex[fill] (c6) at (270:1) [label=below:$x_{6}$]{};
		\vertex[fill] (c7) at (315:1) [label=right:$x_{7}$]{};
  \vertex[fill] (c8) at (360:1) [label=right:$x_{8}$]{};
		\path 
		(c1) edge (c3)
		(c1) edge (c7)
		(c5) edge (c1)
		(c2) edge (c4)
		(c2) edge (c8)
		(c2) edge (c6)
		(c3) edge (c5)
		(c3) edge (c7)
		(c4) edge (c8)
		(c4) edge (c6)
		(c5) edge (c7)
		(c6) edge (c8)
		;
		\end{tikzpicture}\]
	\end{subfigure}
	\hfill
	\caption{From left to right $C_{7}(1,3) $ and $ C_{8}(2,4) $.}\label{fig:c7(1,3)}
\end{figure}		

The work in this paper is inspired by a recent work of Uribe-Paczka et al. \cite{circulent}, where the authors study Castelnuovo-Mumford regularity of edge ideals of cubic circulant graphs. In this paper, we give values for depth and projective dimension, and lower bounds for Stanley depth of the quotient rings of the edge ideals of all cubic circulant graphs, see Theorem \ref{cy1}, Corollary \ref{cy2} and Theorem \ref{cy3}.   We also give a result in Lemma \ref{lemiso}, which is inspired by 
a result of Cimpoeas {\cite[Proposition 1.3] {cycledepth}}.  Lemma \ref{lemiso}  plays a vital role in the computation of depth and a lower bound for Stanley depth in our main findings. For proving over main results the precise values of the said invariants of the quotient rings of the edge ideals associated with certain supergraphs of ladder graph play a crucial role; see for instance Lemma \ref{The1}, Lemma \ref{diamond} and Lemma \ref{dotfamily}.  We gratefully acknowledge the use of CoCoA \cite{cocoa} and Macaulay 2 \cite{macaulay} for experiments.
\section{Preliminaries}
In this section, we present some results that are frequently used throughout the paper. 

\begin{Lemma}[{Depth Lemma}]\label{le01}
 If $0\rightarrow \mathcal{U} \rightarrow \mathcal{V} \rightarrow \mathcal{W} \rightarrow 0$ is a short exact sequence of modules over a
local ring $S$, or a Noetherian graded ring with $S_0$ local, then
\begin{enumerate}
\item $\depth (\mathcal{V}) \geq \min\{\depth(\mathcal{W}), \depth(\mathcal{U})\}$.
\item $\depth (\mathcal{U}) \geq \min\{\depth(\mathcal{V}), \depth(\mathcal{W}) + 1\}$.
\item $\depth (\mathcal{W}) \geq \min\{\depth(\mathcal{U})-1, \depth(\mathcal{V})\}$.
\end{enumerate}
\end{Lemma}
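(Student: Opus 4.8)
The plan is to reduce all three inequalities to a single homological input, namely the characterization of depth in terms of Ext modules against the residue field. Writing $K = S/\mm$ for the residue field, the classical depth--sensitivity theorem (due to Rees) gives
$$\depth(\mathcal{M}) = \min\{i : \Ext^i_S(K,\mathcal{M}) \neq 0\}$$
for every finitely generated module $\mathcal{M}$, with the convention that the minimum over the empty set is $+\infty$. Thus, setting $d_U = \depth(\mathcal{U})$, $d_V = \depth(\mathcal{V})$, $d_W = \depth(\mathcal{W})$, the module $\Ext^i_S(K,\mathcal{U})$ vanishes precisely for $i < d_U$, and similarly for $\mathcal{V}$ and $\mathcal{W}$.

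Next I would apply the functor $\Hom_S(K,-)$ to the given short exact sequence to obtain the long exact sequence
$$\cdots \to \Ext^{i-1}_S(K,\mathcal{W}) \to \Ext^i_S(K,\mathcal{U}) \to \Ext^i_S(K,\mathcal{V}) \to \Ext^i_S(K,\mathcal{W}) \to \Ext^{i+1}_S(K,\mathcal{U}) \to \cdots$$
Each inequality then drops out of a three-term exactness argument. For part (1), fix $i < \min\{d_U, d_W\}$; the flanking terms $\Ext^i_S(K,\mathcal{U})$ and $\Ext^i_S(K,\mathcal{W})$ both vanish, forcing $\Ext^i_S(K,\mathcal{V}) = 0$, so $d_V \geq \min\{d_U, d_W\}$. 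For part (2), fix $i < \min\{d_V, d_W + 1\}$; then $\Ext^i_S(K,\mathcal{V}) = 0$ and $\Ext^{i-1}_S(K,\mathcal{W}) = 0$, and the segment $\Ext^{i-1}_S(K,\mathcal{W}) \to \Ext^i_S(K,\mathcal{U}) \to \Ext^i_S(K,\mathcal{V})$ gives $\Ext^i_S(K,\mathcal{U}) = 0$, whence $d_U \geq \min\{d_V, d_W + 1\}$. For part (3), fix $i < \min\{d_U - 1, d_V\}$; then $\Ext^i_S(K,\mathcal{V}) = 0$ and $\Ext^{i+1}_S(K,\mathcal{U}) = 0$, and the segment $\Ext^i_S(K,\mathcal{V}) \to \Ext^i_S(K,\mathcal{W}) \to \Ext^{i+1}_S(K,\mathcal{U})$ yields $\Ext^i_S(K,\mathcal{W}) = 0$, so $d_W \geq \min\{d_U - 1, d_V\}$.

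The only substantive ingredient is the depth--Ext dictionary invoked at the outset; once it is in hand, the three inequalities are purely formal consequences of shifting attention by $\pm 1$ along the long exact sequence. The main obstacle, such as it is, lies in handling the boundary and infinite cases cleanly: when one of the depths is $+\infty$ (for instance if a module vanishes) the corresponding Ext modules are identically zero and the minimum conventions must be applied consistently, but no new idea is required. An alternative route would replace $\Ext^i_S(K,-)$ by the local cohomology modules $H^i_{\mm}(-)$ and run the identical long exact sequence argument; the bookkeeping is the same.
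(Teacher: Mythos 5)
Your proof is correct, and it is essentially \emph{the} standard proof: the paper itself states the Depth Lemma without proof, as a quoted preliminary (it is the classical result found, e.g., in Bruns--Herzog or Villarreal's \emph{Monomial Algebras}), and the textbook argument behind it is exactly the one you give --- Rees's characterization $\depth(\mathcal{M})=\min\{i : \Ext^i_S(K,\mathcal{M})\neq 0\}$ fed into the long exact sequence of $\Ext^{\bullet}_S(K,-)$, with the three inequalities read off from three-term exact segments shifted by $0$, $-1$, and $+1$ respectively. All three case checks are accurate, including the degenerate instances ($i=0$ in part (2), where $\Ext^{-1}_S(K,\mathcal{W})=0$ trivially, and the $+\infty$ convention for zero modules). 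The only hypotheses worth making explicit are that Rees's theorem requires the modules to be finitely generated (which is the ambient assumption throughout the paper) and that in the graded case one takes $K=S/\mm$ with $\mm$ the graded maximal ideal; neither affects the argument.
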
 \begin{Lemma}[{\cite[Corollary 1.3]{AR1}}]\label{Cor7}
Let $I\subset S$ be a monomial ideal and $u$ be a monomial such that $u\notin I.$ Then
$\depth (S/(I : u))\geq \depth(S/I).$
\end{Lemma}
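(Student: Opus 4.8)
The plan is to reduce the statement to the case where $u$ is a single variable and then exploit the canonical short exact sequence attached to a colon ideal together with the Depth Lemma (Lemma \ref{le01}). First I would record two elementary facts about monomial colons: $(I:vw)=((I:v):w)$ for monomials $v,w$, and if $u\notin I$ then no divisor of $u$ lies in $I$ (otherwise $u$, being a multiple of that divisor, would lie in $I$). Writing $u=x_{i_1}\cdots x_{i_s}$ and colon-ing one variable at a time, at the $j$-th step the monomial whose presence one must exclude is the prefix product $x_{i_1}\cdots x_{i_j}$, which divides $u$ and hence is not in $I$; so the hypothesis ``the relevant monomial is not in the current ideal'' is preserved throughout. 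Thus it suffices to treat $u=x_k$ a single variable with $x_k\notin I$.

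For a single variable I would use the exact sequence
\[
0 \longrightarrow S/(I:x_k) \xrightarrow{\ \cdot x_k\ } S/I \longrightarrow S/(I+(x_k)) \longrightarrow 0,
\]
in which the first map is multiplication by $x_k$; it is well defined and injective precisely because its kernel is $(I:x_k)$, and its cokernel is $S/(I+(x_k))$. Applying Lemma \ref{le01}(2) with $\mathcal U=S/(I:x_k)$, $\mathcal V=S/I$, $\mathcal W=S/(I+(x_k))$ gives
\[
\depth S/(I:x_k) \ \ge\ \min\{\depth S/I,\ \depth S/(I+(x_k))+1\}.
\]
Hence the whole statement comes down to the single inequality $\depth S/(I+(x_k))\ge \depth S/I-1$: once this is known, the right-hand minimum equals $\depth S/I$ and the lemma follows, and the general case is recovered by the reduction above. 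By the Auslander--Buchsbaum formula this is equivalent to the projective-dimension statement $\pdim S/(I:x_k)\le \pdim S/I$, a reformulation worth keeping in mind.

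The inequality $\depth S/(I+(x_k))\ge \depth S/I-1$ is the main obstacle, because the Depth Lemma applied to the displayed sequence only links the three depths to one another circularly and cannot by itself produce it. I would prove it by induction on the number of variables $q$: the module $S/(I+(x_k))$ is annihilated by $x_k$, so it is canonically a cyclic module over the polynomial ring $\bar S=K[x_1,\dots,\widehat{x_k},\dots,x_q]$ in $q-1$ variables, namely $\bar S/\bar I$ where $\bar I$ is the image of $I$ under $x_k\mapsto 0$, and its depth over $S$ agrees with its depth over $\bar S$. The inductive step then has to compare $\depth_S S/I$ with this restricted depth, i.e. to show that passing to $x_k=0$ lowers the depth by at most one; this is the delicate point and is where the $\mathbb Z^q$-graded structure of $S/I$ must be used rather than formal manipulation of exact sequences. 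With that inequality established, the second paragraph completes the single-variable case and the first paragraph finishes the general one, yielding Lemma \ref{Cor7}.
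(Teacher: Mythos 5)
Your setup is fine as far as it goes: the reduction to $u=x_k$ via $(I:vw)=((I:v):w)$ and the observation that prefix products of $u$ stay outside $I$ is correct, as is the short exact sequence $0\to S/(I:x_k)\xrightarrow{\cdot x_k} S/I\to S/(I,x_k)\to 0$ and the application of Lemma \ref{le01}(2). The genuine gap is exactly the point you flag and then leave open: the inequality $\depth(S/(I,x_k))\geq \depth(S/I)-1$ is never proved, and it is not an auxiliary fact but is \emph{equivalent} to the lemma itself modulo the Depth Lemma applied to the same sequence. Indeed, given the lemma, part (3) of Lemma \ref{le01} yields $\depth(S/(I,x_k))\geq\min\{\depth(S/(I:x_k))-1,\depth(S/I)\}\geq\depth(S/I)-1$; conversely, given your inequality, part (2) yields the lemma, as you note. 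So the proposal replaces the statement by an equally strong unproven one and stops. The concluding sketch --- induction on $q$, viewing $S/(I,x_k)$ as $\bar S/\bar I$ over $\bar S=K[x_1,\dots,\widehat{x_k},\dots,x_q]$ --- supplies no mechanism for the inductive step: passing to $\bar S$ changes the ambient ring and the module simultaneously, and comparing $\depth_S(S/I)$ with $\depth_{\bar S}(\bar S/\bar I)$ is precisely the assertion in question; no inductive hypothesis in fewer variables is formulated that would produce it. Note also that the naive homological shortcut is blocked: $\Tor_1^S(S/I,S/(x_k))\cong (I:x_k)/I$ is typically nonzero, so one cannot bound $\pdim(S/(I,x_k))$ by splicing the Koszul complex on $x_k$ onto a resolution of $S/I$. (Your Auslander--Buchsbaum aside is also off as literally stated: $\depth(S/(I,x_k))\geq\depth(S/I)-1$ translates to $\pdim(S/(I,x_k))\leq\pdim(S/I)+1$, whereas $\pdim(S/(I:x_k))\leq\pdim(S/I)$ is a restatement of the lemma itself.)

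For calibration: the paper offers no proof of this statement either; it is imported verbatim from Rauf \cite[Corollary 1.3]{AR1}, whose proof uses genuinely multigraded input about $S/I$ rather than formal manipulation of exact sequences --- consistent with your own correct diagnosis that the Depth Lemma ``only links the three depths to one another circularly.'' A self-contained blind proof would have to supply that multigraded content, for instance by comparing the $\ZZ^q$-graded components of local cohomology: via Takayama-type descriptions of $H^i_{\mm}(S/I)_a$ one can match each nonvanishing component of $H^i_{\mm}(S/(I:u))$ with a nonvanishing component of $H^i_{\mm}(S/I)$ in a shifted degree, which gives $\depth(S/(I:u))\geq\depth(S/I)$ directly and makes the reduction to a single variable unnecessary. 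As submitted, your argument is an honest and correctly organized reduction, but it is not a proof.
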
 \begin{Lemma}[{\cite[Theorem 4.3]{regul}}]\label{exacther}
		Let $I$ be a monomial ideal and let $u$ be an arbitrary monomial in $S.$ Then 
		$$\depth(S/I)=\depth(S/(I:u))  \,\,\text{if}\,\, \depth(S/(I,u))\geq \depth(S/(I:u)).$$
	\end{Lemma}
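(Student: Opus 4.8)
The plan is to exploit the canonical short exact sequence of finitely generated graded $S$-modules
$$0 \longrightarrow S/(I:u) \xrightarrow{\;\cdot u\;} S/I \longrightarrow S/(I,u) \longrightarrow 0,$$
where the first map is multiplication by $u$ and the second is the natural surjection. To justify exactness I would observe that the kernel of the composite $S \xrightarrow{\cdot u} S \longrightarrow S/I$ is exactly the colon ideal $(I:u)$, so multiplication by $u$ induces an injection $S/(I:u) \hookrightarrow S/I$; its image is $((u)+I)/I$, whose cokernel is $S/((u)+I)=S/(I,u)$. Since $S=K[x_1,\dots,x_q]$ is graded with $S_0=K$ a field, the Depth Lemma applies to this sequence.

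First I would apply part (1) of the Depth Lemma (Lemma \ref{le01}) to obtain
$$\depth(S/I)\geq \min\{\depth(S/(I,u)),\,\depth(S/(I:u))\}.$$
By the standing hypothesis $\depth(S/(I,u))\geq \depth(S/(I:u))$, the minimum on the right equals $\depth(S/(I:u))$, so this already yields $\depth(S/I)\geq \depth(S/(I:u))$. For the reverse inequality I would simply invoke Lemma \ref{Cor7}, which gives $\depth(S/(I:u))\geq \depth(S/I)$ as soon as $u\notin I$. Combining the two inequalities produces the claimed equality.

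The only delicate point, and the one I would address explicitly before citing Lemma \ref{Cor7}, is the degenerate case $u\in I$. In that situation $(I:u)=S$, so $S/(I:u)$ is the zero module; under the usual convention that the zero module has depth $+\infty$, the hypothesis $\depth(S/(I,u))\geq \depth(S/(I:u))$ would force $S/(I,u)=0$, i.e. $(I,u)=S$, which is impossible for a proper monomial ideal $I$ (as $u\in I$ gives $(I,u)=I$). Hence the hypothesis is vacuous when $u\in I$, and we may assume $u\notin I$, legitimizing the use of Lemma \ref{Cor7}. I do not foresee any real obstacle beyond this bookkeeping: the argument is essentially a pairing of the Depth Lemma against Lemma \ref{Cor7} through the colon short exact sequence, with care taken only to apply the hypothesis on the correct side of the minimum.
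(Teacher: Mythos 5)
Your argument is correct, but note that the paper itself contains no proof of Lemma \ref{exacther}: it is quoted verbatim from \cite[Theorem 4.3]{regul}, so the comparison is with that cited proof rather than with anything in this paper. Your route differs from the original in one substantive way: for the inequality $\depth(S/(I:u))\geq\depth(S/I)$ you invoke Rauf's colon lemma (Lemma \ref{Cor7}), which is a genuinely monomial-specific fact, whereas the result of \cite{regul} holds for an arbitrary homogeneous form $f$ and gets this inequality purely formally from the same short exact sequence. Indeed, part (2) of the Depth Lemma (Lemma \ref{le01}) gives $\depth(S/(I:u))\geq\min\{\depth(S/I),\,\depth(S/(I,u))+1\}$; under the hypothesis $\depth(S/(I,u))\geq\depth(S/(I:u))$ the second entry of the minimum is strictly larger than $\depth(S/(I:u))$, so the minimum must be attained at $\depth(S/I)$, forcing $\depth(S/(I:u))\geq\depth(S/I)$ with no appeal to Lemma \ref{Cor7} and no use of the monomial hypothesis. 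So your proof is valid in the monomial setting of the lemma as stated, but slightly less general and dependent on an external ingredient that the purely homological argument avoids; what it buys in exchange is that the hypothesis is used only once, on the easy side of the minimum. Your handling of the degenerate case $u\in I$ (via $\depth(0)=+\infty$, making the hypothesis vacuous for proper $I$) is careful and correct; one could add the cosmetic remark that the injection $S/(I:u)\xrightarrow{\cdot u}S/I$ is graded only after a degree shift by $\deg u$, which is harmless since depth is insensitive to shifts.
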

\noindent Similar results hold for Stanley depth as follows.
\begin{Lemma}[{\cite{AR1}}]\label{le1}
Let $0\rightarrow \mathcal{U}\rightarrow \mathcal{V}\rightarrow \mathcal{W}\rightarrow 0$ be a short exact sequence of $\ZZ^{n}$-graded $S$-modules. Then
\begin{equation*}
\sdepth(\mathcal{V})\geq\min\{\sdepth(\mathcal{U}),\sdepth(\mathcal{W})\}.
\end{equation*}
\end{Lemma}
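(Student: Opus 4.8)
The plan is to construct explicitly a Stanley decomposition of $\mathcal{V}$ whose Stanley depth is at least $\min\{\sdepth(\mathcal{U}),\sdepth(\mathcal{W})\}$, by transporting optimal decompositions of the two outer modules into the middle one. Write the sequence as $0\to\mathcal{U}\xrightarrow{\alpha}\mathcal{V}\xrightarrow{\beta}\mathcal{W}\to 0$, where $\alpha$ is injective, $\beta$ is surjective, $\Im\alpha=\Ker\beta$, and all maps are $\ZZ^{n}$-graded $S$-linear. Fix Stanley decompositions $\mathcal{U}=\bigoplus_i u_iK[Z_i]$ and $\mathcal{W}=\bigoplus_j w_jK[Y_j]$ realizing $\sdepth(\mathcal{U})$ and $\sdepth(\mathcal{W})$, respectively.

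First I would push the decomposition of $\mathcal{U}$ forward. Since $\alpha$ is an injective $S$-module homomorphism, each $\alpha(u_iK[Z_i])=\alpha(u_i)K[Z_i]$ is again a free $K[Z_i]$-module, hence a Stanley space of the same dimension $|Z_i|$, and $\bigoplus_i\alpha(u_i)K[Z_i]=\alpha(\mathcal{U})=\Ker\beta$. Next I would lift the decomposition of $\mathcal{W}$: for each $j$ choose a homogeneous $v_j\in\mathcal{V}$ with $\beta(v_j)=w_j$, which is possible because $\beta$ is graded and surjective and $w_j$ is homogeneous. The key verification is that $v_jK[Y_j]$ is a Stanley space of dimension $|Y_j|$. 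If $f\in K[Y_j]$ satisfies $v_jf=0$, then applying the $S$-linear map $\beta$ gives $w_jf=\beta(v_jf)=0$, and freeness of $w_jK[Y_j]$ forces $f=0$; thus $v_j$ has trivial annihilator in $K[Y_j]$, so $v_jK[Y_j]$ is $K[Y_j]$-free.

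Finally I would assemble these and read off the Stanley depth. Because $\beta(v_jy)=w_jy$ and the $w_jy$ form a $K$-basis of $\mathcal{W}$, the map $\beta$ sends $\bigoplus_j v_jK[Y_j]$ isomorphically onto $\mathcal{W}$; in particular $\beta$ is injective on this part. A standard splitting argument then shows $\mathcal{V}=\alpha(\mathcal{U})\oplus\bigoplus_j v_jK[Y_j]$ as $\ZZ^{n}$-graded $K$-vector spaces: given $v\in\mathcal{V}$, subtract the unique element of $\bigoplus_j v_jK[Y_j]$ whose image under $\beta$ equals $\beta(v)$ to land in $\Ker\beta=\alpha(\mathcal{U})$, and directness follows from the injectivity of $\beta$ on the lifted part. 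Combining with the pushed-forward decomposition of $\alpha(\mathcal{U})$ yields the Stanley decomposition
$$\mathcal{V}=\bigoplus_i\alpha(u_i)K[Z_i]\oplus\bigoplus_j v_jK[Y_j],$$
whose Stanley depth is $\min\{\min_i|Z_i|,\min_j|Y_j|\}=\min\{\sdepth(\mathcal{U}),\sdepth(\mathcal{W})\}$, giving the claimed bound.

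I expect the main obstacle to be the Stanley-space check for the lifts $v_jK[Y_j]$, together with the verification that the pushed-forward and lifted families together form a direct sum. Both hinge on the $S$-linearity of $\beta$ and the exactness $\Im\alpha=\Ker\beta$, rather than on any $S$-module splitting of the sequence, which need not exist; the sequence splits only at the level of graded $K$-vector spaces, and it is precisely this weaker splitting that the argument exploits.
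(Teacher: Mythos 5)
Your proof is correct and coincides with the paper's own treatment: the paper does not reprove this lemma but cites Rauf \cite{AR1}, whose argument is exactly yours --- push the optimal Stanley decomposition of $\mathcal{U}$ forward along the injection and lift that of $\mathcal{W}$ through homogeneous preimages, obtaining a splitting of $\mathcal{V}$ as $\ZZ^n$-graded $K$-vector spaces (not as $S$-modules). Your key verifications --- existence of homogeneous lifts $v_j$ since $\beta$ is graded and surjective, freeness of $v_jK[Y_j]$ via the triviality of $\Ann_{K[Y_j]}(w_j)$, and directness of the assembled decomposition via injectivity of $\beta$ on the lifted part together with $\Im\alpha=\Ker\beta$ --- are all sound and complete.
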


\begin{Lemma}[{\cite[Proposition 2.7]{MC}}]\label{Pro7}
Let $I\subset S$ be a monomial ideal and $u$ be a monomial such that $u\notin I.$ Then $\sdepth (S/(I : u))\geq \sdepth(S/I).$
\end{Lemma}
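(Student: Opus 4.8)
The plan is to realize $S/(I:u)$, up to a $\mathbb{Z}^q$-degree shift, as a monomial submodule of $S/I$ and then cut an optimal Stanley decomposition of $S/I$ down to that submodule. First I would record the standard isomorphism: multiplication by $u$ defines a $\mathbb{Z}^q$-graded map $S/I \to S/I$ whose kernel is $(I:u)/I$ and whose image is $M:=u(S/I)=(uS+I)/I$; hence $S/(I:u)\cong M$ as $\mathbb{Z}^q$-graded modules up to the shift by $\deg u$. Since Stanley depth is invariant under such shifts, it suffices to prove $\sdepth(M)\geq \sdepth(S/I)$. The useful description of $M$ is combinatorial: its $K$-basis is exactly the set of monomials $w\notin I$ that are divisible by $u$ (indeed $w\notin I$ with $u\mid w$ means $w=um$ with $um\notin I$, i.e. $m\notin (I:u)$).

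Next I would fix a Stanley decomposition $\mathcal{D}\colon S/I=\bigoplus_{j} v_j K[Z_j]$ with $\sdepth(\mathcal{D})=\sdepth(S/I)$, where each $v_j$ is a monomial not in $I$ and $Z_j\subseteq\{x_1,\dots,x_q\}$. Because $M$ is spanned by a subset of the monomial $K$-basis of $S/I$ and each Stanley space $v_j K[Z_j]$ is likewise spanned by a subset of that basis, the decomposition is compatible with $M$: one gets $M=\bigoplus_j\bigl(M\cap v_j K[Z_j]\bigr)$ as $K$-vector spaces. So everything reduces to understanding one intersection $M\cap v_j K[Z_j]$, i.e. the set of monomials of $v_j K[Z_j]$ that are divisible by $u$.

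The heart of the argument, and the step I expect to be the main obstacle, is to show that each such intersection is again a Stanley space of the \emph{same} dimension $|Z_j|$. Writing $u=\prod_i x_i^{a_i}$ and $v_j=\prod_i x_i^{b_i}$, a monomial $v_j n$ with $n\in K[Z_j]$ is divisible by $u$ iff the exponent inequalities hold coordinatewise; for $x_i\notin Z_j$ the exponent of $v_j n$ in $x_i$ is fixed at $b_i$, so if $b_i<a_i$ for some $x_i\notin Z_j$ then no monomial of $v_j K[Z_j]$ is divisible by $u$ and $M\cap v_j K[Z_j]=0$. Otherwise the divisibility condition constrains only the exponents on the variables of $Z_j$, and I would check that the surviving monomials are exactly $\lcm(u,v_j)\,K[Z_j]$, a free $K[Z_j]$-module of dimension $|Z_j|$. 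Collecting the nonzero pieces yields a Stanley decomposition $M=\bigoplus_{j}\lcm(u,v_j)K[Z_j]$ over the admissible indices $j$, whose Stanley depth is $\min|Z_j|\geq \sdepth(\mathcal{D})=\sdepth(S/I)$. I would also remark that the naive short exact sequence $0\to (I:u)/I\to S/I\to M\to 0$ together with Lemma \ref{le1} only bounds $\sdepth(S/I)$ from below and thus points the wrong way, which is precisely why the explicit decomposition argument is needed.
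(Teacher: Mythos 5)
Your proof is correct. The paper does not prove this lemma at all---it quotes it as \cite[Proposition 2.7]{MC}---and your argument is essentially the one in that source: restrict an optimal Stanley decomposition $S/I=\bigoplus_j v_jK[Z_j]$ to the monomials divisible by $u$, check that each surviving Stanley space contributes $\lcm(u,v_j)K[Z_j]$ of unchanged dimension $|Z_j|$ (your exponent-by-exponent verification, including the vanishing case $b_i<a_i$ for some $x_i\notin Z_j$, is exactly right), the only cosmetic difference being that you work inside $S/I$ via the shift isomorphism $S/(I:u)\cong (uS+I)/I$ whereas Cimpoeas divides by $u$ and writes the resulting decomposition $S/(I:u)=\bigoplus_j \bigl(\lcm(u,v_j)/u\bigr)K[Z_j]$ directly.
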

	
	\begin{Lemma}[{\cite[Lemma 1.7]{sdepth}}]\label{exacthersdepth}
		Let $I$ be a monomial ideal and let $u$ be a monomial in $S$ such that $u\notin I.$ Then 
		$$\sdepth(S/I)=\sdepth(S/(I:u))  \,\,\text{if}\,\, \sdepth(S/(I,u))\geq \sdepth(S/(I:u)).$$
	\end{Lemma}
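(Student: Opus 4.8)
The plan is to mirror the proof of the depth version, Lemma \ref{exacther}, but substitute the Stanley depth tools already available in this section, namely Lemma \ref{le1} and Lemma \ref{Pro7}. The engine of the argument is the short exact sequence of $\ZZ^{n}$-graded $S$-modules
$$0 \longrightarrow S/(I:u) \xrightarrow{\;\cdot u\;} S/I \longrightarrow S/(I,u) \longrightarrow 0,$$
where the first map is multiplication by $u$. First I would check exactness: a class $\bar v$ is sent to zero in $S/I$ exactly when $uv\in I$, that is, when $v\in(I:u)$, so the kernel of $\cdot u$ is trivial and the map is injective; the image of $\cdot u$ is $(uS+I)/I$, whence the cokernel is $S/(uS+I)=S/(I,u)$. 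Since $u\notin I$, all three modules are nonzero, and because Stanley depth is invariant under the multidegree shift induced by $u$, this is a genuine short exact sequence of $\ZZ^{n}$-graded modules to which Lemma \ref{le1} applies.

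Applying Lemma \ref{le1} with $\mathcal{U}=S/(I:u)$, $\mathcal{V}=S/I$, and $\mathcal{W}=S/(I,u)$ yields
$$\sdepth(S/I)\geq \min\{\sdepth(S/(I:u)),\,\sdepth(S/(I,u))\}.$$
The hypothesis $\sdepth(S/(I,u))\geq \sdepth(S/(I:u))$ then lets me replace the minimum by $\sdepth(S/(I:u))$, giving the first inequality $\sdepth(S/I)\geq \sdepth(S/(I:u))$.

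For the reverse inequality I would invoke Lemma \ref{Pro7} directly with the same monomial $u\notin I$, which gives $\sdepth(S/(I:u))\geq \sdepth(S/I)$ with no extra hypotheses. Combining the two inequalities forces $\sdepth(S/I)=\sdepth(S/(I:u))$, which is the claim.

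The argument is short, and I do not expect a genuine obstacle; the entire substantive content is carried by the two cited lemmas. The only point requiring a little care is the bookkeeping around the grading: multiplication by $u$ shifts multidegrees, so the displayed sequence is exact only after the appropriate shift, and one must remember that \emph{Lemma \ref{le1}} is stated for $\ZZ^{n}$-graded modules. Fortunately Stanley depth does not detect such shifts, so this causes no difficulty.
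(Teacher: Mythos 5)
Your proof is correct and follows essentially the intended route: the paper states Lemma \ref{exacthersdepth} without proof, citing \cite[Lemma 1.7]{sdepth}, and your argument via the short exact sequence $0\to S/(I:u)\xrightarrow{\cdot u} S/I\to S/(I,u)\to 0$ combined with Lemma \ref{le1} (to get $\sdepth(S/I)\geq\sdepth(S/(I:u))$ under the hypothesis) and Lemma \ref{Pro7} (for the reverse inequality) is exactly the standard proof of that cited result. Your handling of the one subtle point is also right: multiplication by $u$ is injective only as a map from the shifted module $S/(I:u)(-\deg u)$, but Stanley depth is unchanged by multidegree shifts, so the application of Lemma \ref{le1} is legitimate.
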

 \noindent When we add new variables to the ring then depth and Stanley depth will likewise increase {\cite[Lemma 3.6]{herz}}. This fact is summarized in the following lemma.
		\begin{Lemma}\label{111}
			Let $J\subset S$ be a monomial ideal, and $\bar{S}=S \tensor_{K} K[x_{q+1}]$ be a polynomial ring in $q+1$ variables. Then  $\depth(\bar{S}/J)=\depth(S/J)+1$ and $   \sdepth(\bar{S}/J)=\sdepth(S/J)+1.$ 
		\end{Lemma}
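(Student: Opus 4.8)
The plan is to exploit the fact that $J$ is generated by monomials involving only $x_1,\dots,x_q$, so that adjoining the single new variable $x_{q+1}$ leaves the defining relations untouched. Concretely, $J\bar{S}$ has the same minimal monomial generators as $J$, which gives the $K$-algebra identification $\bar{S}/J\cong (S/J)\otimes_K K[x_{q+1}]$; in particular $x_{q+1}$ is a nonzerodivisor on $\bar{S}/J$. I would also record once the elementary isomorphism $\bar{S}/(J,x_{q+1})\cong S/J$. Since this quotient is annihilated by $x_{q+1}$, the graded maximal ideal $\bar{\mathfrak{m}}=(x_1,\dots,x_{q+1})$ of $\bar{S}$ and the ideal $\mathfrak{m}=(x_1,\dots,x_q)$ act on it in the same way, so that both its depth and its Stanley depth may be computed equivalently over $S$ or over $\bar{S}$. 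This bookkeeping is what allows the two rings to be compared at all.

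For the depth equality I would argue through the regular element $x_{q+1}$. As $x_{q+1}\in\bar{\mathfrak{m}}$ is a nonzerodivisor on $M:=\bar{S}/J$, a maximal $M$-sequence in $\bar{\mathfrak{m}}$ can be chosen to begin with $x_{q+1}$, and the standard correspondence between $M$-sequences extending $x_{q+1}$ and $M/x_{q+1}M$-sequences yields $\depth(M)=\depth(M/x_{q+1}M)+1$; combined with $M/x_{q+1}M\cong S/J$ from the first paragraph this gives $\depth(\bar{S}/J)=\depth(S/J)+1$. A transparent alternative is flat base change: a minimal graded free resolution of $S/J$ over $S$ tensors up, via the free extension $S\subseteq\bar{S}$, to a minimal graded free resolution of $\bar{S}/J$ over $\bar{S}$ (the differential entries stay inside $\bar{\mathfrak{m}}$), so the graded Betti numbers and hence $\pdim$ are preserved; applying the Auslander--Buchsbaum formula in $S$ (of depth $q$) and in $\bar{S}$ (of depth $q+1$) then forces the depth to increase by exactly one.

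For the Stanley depth, the lower bound is the constructive half. Beginning from an optimal Stanley decomposition $S/J=\bigoplus_{j} u_j K[Z_j]$ with $Z_j\subseteq\{x_1,\dots,x_q\}$ and $\min_j|Z_j|=\sdepth(S/J)$, I would tensor with $K[x_{q+1}]$ and use $\bar{S}/J\cong (S/J)\otimes_K K[x_{q+1}]$ to obtain
\[
\bar{S}/J=\bigoplus_{j} u_j\,K[Z_j\cup\{x_{q+1}\}].
\]
Each summand is again a Stanley space, now with one extra variable, so this decomposition certifies $\sdepth(\bar{S}/J)\geq\sdepth(S/J)+1$.

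The reverse inequality $\sdepth(\bar{S}/J)\leq\sdepth(S/J)+1$ is the main obstacle, because a Stanley decomposition of the larger module need not respect the tensor factorization and so cannot simply be pushed back down to $S/J$. Here I would not attempt a bare-hands argument but reduce to the behaviour of Stanley depth under the regular element $x_{q+1}$: passing from $M=\bar{S}/J$ to $M/x_{q+1}M\cong S/J$ drops the Stanley depth by at most one, which is exactly the substantive content of \cite[Lemma 3.6]{herz}; its proof reduces the computation to the associated finite characteristic poset and compares that poset with its product by a two-element chain. Granting this, one gets $\sdepth(S/J)\geq\sdepth(\bar{S}/J)-1$, and together with the constructive lower bound this establishes $\sdepth(\bar{S}/J)=\sdepth(S/J)+1$.
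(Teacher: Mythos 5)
Your proposal is correct, but be aware that the paper contains no internal proof of this lemma to compare against: it is stated explicitly as a summary of \cite[Lemma 3.6]{herz}, so the paper's ``proof'' is the citation itself. Relative to that, you prove strictly more. Your depth argument is complete and standard in both variants: the regular-element route (using that $x_{q+1}$ is a nonzerodivisor on $\bar{S}/J$, that $\bar{S}/(J,x_{q+1})\cong S/J$, and that depth of a module killed by $x_{q+1}$ is the same over $S$ and $\bar{S}$) and the flat base change route (minimality of the lifted resolution is preserved since the differentials have entries in the old maximal ideal, and Auslander--Buchsbaum then converts equality of projective dimensions into the shift by one in depth). Your tensor construction giving $\sdepth(\bar{S}/J)\geq \sdepth(S/J)+1$ is exactly the easy half of the cited lemma. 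The only step you outsource, the inequality $\sdepth(\bar{S}/J)\leq \sdepth(S/J)+1$, you refer back to \cite[Lemma 3.6]{herz}; this is legitimate here precisely because the paper leans wholesale on the same reference, though as a standalone proof it would be circular since that lemma \emph{is} the statement being proved. It is worth knowing that this step does not require the characteristic-poset machinery of \cite{herz} and admits a two-line direct argument that would make your write-up fully self-contained: a Stanley decomposition $\bar{S}/J=\bigoplus_j u_jK[Z_j]$ is a decomposition into $\ZZ^{q+1}$-graded $K$-subspaces, hence restricts to the $x_{q+1}$-degree-zero component of $\bar{S}/J$, which is $S/J$; each summand with $x_{q+1}\nmid u_j$ restricts to $u_jK[Z_j\setminus\{x_{q+1}\}]$ and each summand with $x_{q+1}\mid u_j$ restricts to zero, yielding a Stanley decomposition of $S/J$ whose spaces lose at most one variable, so $\sdepth(S/J)\geq \sdepth(\bar{S}/J)-1$, which is the missing upper bound.
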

  
	\begin{Lemma}[{\cite[Proposition 2.2.20]{book}}]\label{virtens}
		Let $1\leq r < q$ and $S= \mathcal{S}_1\tensor_K \mathcal{S}_2,$ where $\mathcal{S}_{1}=K[x_{1},\dots, x_{r}]$ and $\mathcal{S}_{2}=K[x_{r+1},\dots, x_{q}].$ If $I\subset \mathcal{S}_{1}$ and $J\subset \mathcal{S}_{2}$ are monomial ideals, then $S/(I+J)\cong \mathcal{S}_{1}/I \tensor_{K}\mathcal{S}_{2}/J.$ 
	\end{Lemma}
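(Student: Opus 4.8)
The plan is to exhibit the standard isomorphism by first producing a natural surjection onto the tensor product and then checking that it is bijective by means of a monomial $K$-basis count. Throughout, $I+J$ is understood as the sum $IS+JS$ of the extensions of $I$ and $J$ to $S$.

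First I would build the map from the two canonical projections $\pi_1\colon\mathcal{S}_1\to\mathcal{S}_1/I$ and $\pi_2\colon\mathcal{S}_2\to\mathcal{S}_2/J$. Since $S=\mathcal{S}_1\otimes_K\mathcal{S}_2$, the tensor product $\pi_1\otimes\pi_2$ is a surjective $K$-algebra homomorphism $\phi\colon S\to(\mathcal{S}_1/I)\otimes_K(\mathcal{S}_2/J)$, sending $a\otimes b\mapsto\pi_1(a)\otimes\pi_2(b)$. Because $\pi_1$ kills $I$ and $\pi_2$ kills $J$, every generator of $IS$ and of $JS$ lies in $\Ker\phi$, so $I+J\subseteq\Ker\phi$ and $\phi$ descends to a surjection $\bar\phi\colon S/(I+J)\to(\mathcal{S}_1/I)\otimes_K(\mathcal{S}_2/J)$. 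It then remains to prove that $\bar\phi$ is injective, equivalently that $\Ker\phi=I+J$.

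For injectivity I would compare monomial $K$-bases. Every monomial $m$ of $S$ factors uniquely as $m=m_1m_2$ with $m_1$ a monomial in $x_1,\dots,x_r$ and $m_2$ a monomial in $x_{r+1},\dots,x_q$. The key step is the membership criterion: since $I$ and $J$ are monomial ideals in disjoint sets of variables, $m\in I+J$ if and only if $m_1\in I$ or $m_2\in J$; consequently the residues of the monomials $m_1m_2$ with $m_1\notin I$ and $m_2\notin J$ form a $K$-basis of $S/(I+J)$. On the other hand, the classes $\pi_1(m_1)$ with $m_1\notin I$ form a $K$-basis of $\mathcal{S}_1/I$ and the classes $\pi_2(m_2)$ with $m_2\notin J$ form a $K$-basis of $\mathcal{S}_2/J$, so the elementary tensors $\pi_1(m_1)\otimes\pi_2(m_2)$ over these indices form a $K$-basis of the tensor product. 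Since $\bar\phi$ carries the first basis bijectively onto the second, it is an isomorphism of $K$-vector spaces, and being a ring homomorphism it is the desired ring isomorphism.

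The main obstacle is precisely the membership criterion feeding the injectivity argument: one inclusion is immediate, but to show that a monomial lying in $I+J$ forces $m_1\in I$ or $m_2\in J$ one must invoke that a monomial belongs to a monomial ideal exactly when it is divisible by one of the ideal's minimal generators, combined with the fact that the generators of $I$ involve only $x_1,\dots,x_r$ while those of $J$ involve only $x_{r+1},\dots,x_q$. Alternatively, one could bypass the basis count by constructing an explicit inverse to $\bar\phi$ via the universal property of the tensor product, setting $\psi\bigl(\pi_1(a)\otimes\pi_2(b)\bigr)=\overline{ab}$, checking that $\psi$ is well defined and $K$-bilinear, and verifying that $\psi$ and $\bar\phi$ are mutually inverse.
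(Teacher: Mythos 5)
Your proof is correct, and since the paper states this lemma only as a cited preliminary (Villarreal, Proposition 2.2.20) without reproducing a proof, there is nothing in-paper to diverge from: your argument — the surjection $\pi_1\otimes\pi_2$ descending to $S/(I+J)$, followed by the monomial-basis count resting on the criterion that $m_1m_2\in I+J$ if and only if $m_1\in I$ or $m_2\in J$ — is exactly the standard proof, and you correctly isolate the one point needing care (divisibility by a minimal generator in disjoint variable sets). Your alternative route via the universal property, defining $\psi(\pi_1(a)\otimes\pi_2(b))=\overline{ab}$ and checking it is a well-defined two-sided inverse, is equally valid and slightly cleaner in that it avoids the basis count altogether.
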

\noindent Combining Lemma \ref{virtens} with {\cite[Proposition 2.2.21]{book}} and {\cite[Theorem 3.1]{AR1}} for depth and Stanley depth, respectively, we get the following useful result.
	\begin{Lemma}\label{LEMMA1.5} Let $1\leq r < q$ and $S= \mathcal{S}_1\tensor_K \mathcal{S}_2,$ where $\mathcal{S}_{1}=K[x_{1},\dots, x_{r}]$ and $\mathcal{S}_{2}=K[x_{r+1},\dots, x_{q}].$  If $I\subset \mathcal{S}_{1}$ and $J\subset \mathcal{S}_{2}$ are monomial ideals, then
		$ \depth_{S}(\mathcal{S}_{1}/I\tensor_{K}\mathcal{S}_{2}/J)= \depth_{S}(S/(I+J))=\depth_{\mathcal{S}_{1}}(\mathcal{S}_{1}/I)+\depth_{\mathcal{S}_{2}}(\mathcal{S}_{2}/J)$ and 	$ \sdepth_{S}(\mathcal{S}_{1}/I \tensor_{K} \mathcal{S}_{2}/J)=\sdepth_{S}(S/(I+J)) \geq \sdepth_{\mathcal{S}_{1}}(\mathcal{S}_{1}/I)+\sdepth_{\mathcal{S}_{2}}(\mathcal{S}_{2}/J).$
	\end{Lemma}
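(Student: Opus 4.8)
The plan is to reduce the whole statement to the tensor-product decomposition furnished by Lemma \ref{virtens} and then to invoke the two cited additivity results, one for depth and one for Stanley depth. First I would record that Lemma \ref{virtens} gives an isomorphism of graded $S$-modules
\[
S/(I+J)\cong \mathcal{S}_1/I \tensor_K \mathcal{S}_2/J .
\]
Since both depth and Stanley depth are invariants of the graded isomorphism class of a module, this isomorphism immediately yields the two middle equalities $\depth_S(\mathcal{S}_1/I \tensor_K \mathcal{S}_2/J)=\depth_S(S/(I+J))$ and $\sdepth_S(\mathcal{S}_1/I \tensor_K \mathcal{S}_2/J)=\sdepth_S(S/(I+J))$. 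Hence the only remaining task is to evaluate the depth and Stanley depth of the tensor product, computed over the big ring $S=\mathcal{S}_1\tensor_K \mathcal{S}_2$, in terms of the corresponding invariants over $\mathcal{S}_1$ and $\mathcal{S}_2$ separately.

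For the depth half I would apply \cite[Proposition 2.2.21]{book}, which asserts the additivity of depth for a tensor product $\mathcal{S}_1/I \tensor_K \mathcal{S}_2/J$ of quotients of polynomial rings over $K$ in disjoint variable sets, namely $\depth_S(\mathcal{S}_1/I \tensor_K \mathcal{S}_2/J)=\depth_{\mathcal{S}_1}(\mathcal{S}_1/I)+\depth_{\mathcal{S}_2}(\mathcal{S}_2/J)$. Chaining this with the middle equality above completes the full string of depth equalities. For the Stanley depth half I would apply \cite[Theorem 3.1]{AR1}, which gives only a lower bound, $\sdepth_S(\mathcal{S}_1/I \tensor_K \mathcal{S}_2/J)\geq \sdepth_{\mathcal{S}_1}(\mathcal{S}_1/I)+\sdepth_{\mathcal{S}_2}(\mathcal{S}_2/J)$; combined with the middle equality this produces the claimed inequality $\sdepth_S(S/(I+J))\geq \sdepth_{\mathcal{S}_1}(\mathcal{S}_1/I)+\sdepth_{\mathcal{S}_2}(\mathcal{S}_2/J)$.

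Because the cited results carry all the substantive content, there is no genuine difficulty to overcome here; the statement is essentially an assembly of three ingredients. The one point that requires care is bookkeeping of the base rings: in each cited formula the invariant on the left is taken over $S=\mathcal{S}_1\tensor_K \mathcal{S}_2$ while those on the right are taken over $\mathcal{S}_1$ and $\mathcal{S}_2$ respectively, so I would keep the subscripts on $\depth$ and $\sdepth$ explicit throughout to ensure the transition from the quotient $S/(I+J)$ to the factors is made consistently. With that convention fixed, the three ingredients combine directly into the asserted equalities and inequality.
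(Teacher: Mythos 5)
Your proposal is correct and follows exactly the route the paper takes: the paper states this lemma as an immediate consequence of combining Lemma \ref{virtens} with \cite[Proposition 2.2.21]{book} for depth and \cite[Theorem 3.1]{AR1} for Stanley depth, which is precisely your assembly. Your added care about keeping the base-ring subscripts on $\depth$ and $\sdepth$ explicit is the right bookkeeping point, but there is no substantive difference from the paper's argument.
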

 \noindent It is obvious and well known that $\depth(S)=\sdepth(S)=q.$ 
	  \begin{Lemma}[{\cite[Theorem 1.4]{miii}}]\label{stan1}
       Let $\mathcal{M}$ be a $\mathbb{Z}^q$-graded $S$-module. If $\sdepth(\mathcal{M})=0$ then  $\depth(\mathcal{M})=0.$  Conversely, if $\depth(\mathcal{M})=0$ and $\dim_K (\mathcal{M}_a)\leq 1$ for any $a\in\mathbb{Z}^q,$ then $\sdepth(\mathcal{M})=0.$
  \end{Lemma}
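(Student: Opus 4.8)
The key observation I would use is the standard characterization that $\depth(\mathcal{M})=0$ if and only if $\mathfrak{m}\in\Ass(\mathcal{M})$, equivalently the socle $\Soc(\mathcal{M})=(0:_{\mathcal{M}}\mathfrak{m})$ is nonzero; this follows from graded prime avoidance together with the finiteness of $\Ass(\mathcal{M})$. Since $\Soc(\mathcal{M})$ is a graded submodule, it is nonzero precisely when it contains a nonzero homogeneous element. I would use this to bridge the two invariants, because homogeneous socle elements are exactly the obstructions one can detect inside an arbitrary Stanley decomposition.

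For the converse I would argue as follows. Assume $\depth(\mathcal{M})=0$ and $\dim_K(\mathcal{M}_a)\le 1$ for all $a\in\mathbb{Z}^q$. Pick a nonzero homogeneous $m\in\Soc(\mathcal{M})$, say of degree $a$; the hypothesis forces $\mathcal{M}_a=Km$. Let $\mathcal{D}:\mathcal{M}=\bigoplus_{j}u_jK[Z_j]$ be any Stanley decomposition. Comparing degree-$a$ components and using $\dim_K\mathcal{M}_a=1$, exactly one summand meets degree $a$, say $u_{j_0}K[Z_{j_0}]$, and freeness of this Stanley space gives a unique monomial $y_0\in K[Z_{j_0}]$ with $m=\lambda u_{j_0}y_0$ for some $\lambda\in K^{\ast}$. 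If some $x_i\in Z_{j_0}$, then $x_im=\lambda u_{j_0}(y_0x_i)$ is a nonzero element of the free module $u_{j_0}K[Z_{j_0}]$, contradicting $m\in\Soc(\mathcal{M})$. Hence $Z_{j_0}=\emptyset$, so $\min_j|Z_j|=0$. As $\mathcal{D}$ was arbitrary, $\sdepth(\mathcal{M})=0$. The role of the hypothesis $\dim_K\mathcal{M}_a\le 1$ is precisely to pin the socle element to a single monomial generator of its Stanley space; without it $m$ could spread across several summands and the argument would break.

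For the forward implication I would prove the contrapositive: if $\depth(\mathcal{M})\ge 1$, i.e. $\Soc(\mathcal{M})=0$, then I must exhibit a Stanley decomposition in which every $Z_j$ is nonempty, giving $\sdepth(\mathcal{M})\ge 1$. The clean case is when some variable $x_i$ is a nonzerodivisor on $\mathcal{M}$: lifting a homogeneous $K$-basis of $\mathcal{M}/x_i\mathcal{M}$ to elements $v_k\in\mathcal{M}$ yields $\mathcal{M}=\bigoplus_k v_kK[x_i]$ as $K$-vector spaces, each summand free over $K[x_i]$ because $x_i$ is a nonzerodivisor (directness and freeness follow from $\bigcap_t x_i^t\mathcal{M}=0$, which holds since the grading is bounded below). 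This is a Stanley decomposition with every $|Z_k|=1$. In general no single variable need be a nonzerodivisor even when $\depth(\mathcal{M})\ge 1$ (for instance $S/(x_1x_2)$), so I would run an induction on $q$: fix $x_q$, let $N=\bigcup_t(0:_{\mathcal{M}}x_q^t)$ be the $x_q$-power-torsion submodule, use that $x_q$ is a nonzerodivisor on $\mathcal{M}/N$ to decompose that quotient with all $Z\ni x_q$, and decompose $N$ by the inductive hypothesis after filtering it by powers of $x_q$ into pieces annihilated by $x_q$, hence modules over $K[x_1,\dots,x_{q-1}]$.

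The main obstacle lies exactly here. For submodules the socle behaves well, $\Soc(N)=N\cap\Soc(\mathcal{M})=0$, so $N$ and those torsion subquotients that are genuine submodules inherit positive depth and the induction feeds them correctly. The difficulty is that filtering $N$ by $x_q$-powers produces quotient pieces such as $N/x_qN$, and the socle does not embed into $\Soc(\mathcal{M})$ through a quotient; a spurious socle element of $N/x_qN$ need not come from one of $\mathcal{M}$. Resolving this is the crux: I would either refine the filtration so that each graded piece appears as an honest submodule of $\mathcal{M}$ (controlling how $\Soc=0$ propagates), or invoke the interval-partition description of Stanley decompositions of finitely generated $\mathbb{Z}^q$-graded modules from Ichim et al. \cite{Ichim2} and translate $\Soc(\mathcal{M})=0$ into the statement that the associated poset admits a partition into intervals none of which is a single point, which is the combinatorial heart of the forward implication.
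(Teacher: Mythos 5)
First, a point of comparison: the paper does not prove this lemma at all; it quotes it verbatim from Cimpoeas \cite{miii}, so your proposal can only be judged on its own merits. Your proof of the converse direction is complete and correct: $\depth(\mathcal{M})=0$ gives $\mathfrak{m}\in\Ass(\mathcal{M})$, hence a nonzero homogeneous socle element $m$ of some degree $a$; since each Stanley space has graded components of dimension at most one and $\dim_K\mathcal{M}_a=1$, the element $m$ lies in a single summand $u_{j_0}K[Z_{j_0}]$, and freeness of that summand forces $Z_{j_0}=\emptyset$. Your diagnosis of where the hypothesis $\dim_K\mathcal{M}_a\le 1$ enters (preventing $m$ from spreading over several summands, across which multiplication by $x_i$ is not compatible with the vector-space decomposition) is exactly right.

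The forward direction, however, is left with a genuine gap, which you candidly flag yourself --- and note this is precisely the direction the paper uses, invoking the lemma to deduce $\sdepth\geq 1$ from $\depth\geq 1$ in the $n=3$ cases of Propositions \ref{mobiousstan} and \ref{circularstan}. Neither of your proposed repairs works as stated: in a filtration of $N=\bigcup_t(0:_{\mathcal{M}}x_q^t)$ by the submodules $(0:_N x_q^i)$, only the bottom layer is a submodule of $\mathcal{M}$, so ``refining the filtration so that each graded piece appears as an honest submodule'' is not achievable; and the interval-partition statement you would import from Ichim et al.\ is essentially a restatement of the theorem to be proved, so invoking it is circular. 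The correct fix is to change the induction parameter and never decompose $N$ internally: induct on the number of variables acting nilpotently on $\mathcal{M}$. If $\Soc(\mathcal{M})=0$ and every variable is nilpotent on $\mathcal{M}$, then $\mathcal{M}$ has finite length, and a nonzero finite-length module has nonzero socle, so $\mathcal{M}=0$. Otherwise choose $x_j$ not nilpotent on $\mathcal{M}$ and set $N=\bigcup_t(0:_{\mathcal{M}}x_j^t)$; then $N\neq\mathcal{M}$, the variable $x_j$ is a nonzerodivisor on $\mathcal{M}/N$, so your ``clean case'' decomposition gives $\sdepth(\mathcal{M}/N)\geq 1$; and $N$ is an honest submodule, so $\Soc(N)=N\cap\Soc(\mathcal{M})=0$, while $x_j$ is now nilpotent on $N$, so the set of nilpotent variables strictly grows and the inductive hypothesis yields $\sdepth(N)\geq 1$ when $N\neq 0$. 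Applying Lemma \ref{le1} to $0\to N\to\mathcal{M}\to\mathcal{M}/N\to 0$ then gives $\sdepth(\mathcal{M})\geq 1$. In short: your converse is correct, your forward direction correctly isolates but does not resolve the crux, and the resolution is to treat the torsion submodule as a single inductive unit attacked with a fresh variable, rather than filtering it by powers of $x_q$.
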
 
	\begin{Lemma}[{\cite[Theorems 1.3.3]{depth}}](Auslander–Buchsbaum formula)\label{auss13}
		Let $ R $ be  a commutative Noetherian local ring and $\mathcal{M}$  be a non-zero finitely generated R-module of finite projective dimension. Then
		\begin{equation*}
			{\pdim}(\mathcal{M})+{\depth}(\mathcal{M})={\depth}(R).
		\end{equation*}
	\end{Lemma}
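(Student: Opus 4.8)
The statement to be established is the Auslander--Buchsbaum formula. I would prove it by induction on $d:=\depth(R)$, letting the Depth Lemma (Lemma~\ref{le01}) do the bookkeeping in the inductive step and isolating the delicate content in the base case $d=0$. The case $d=0$ is exactly what I expect to be the main obstacle, because it is equivalent to the assertion that \emph{over a local ring of depth zero every nonzero finitely generated module of finite projective dimension is free}; once this is known the formula reads $\pdim(\mathcal{M})+\depth(\mathcal{M})=0+0=0=\depth(R)$, and the passage to higher depth is comparatively routine.

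To settle the base case I would use minimality of the resolution together with the existence of a socle element. Since $\depth(R)=0$ means $\mathfrak{m}\in\Ass(R)$, there is a nonzero $y\in R$ with $\mathfrak{m}y=0$. Suppose, for contradiction, that $\mathcal{M}$ is not free, so $p:=\pdim(\mathcal{M})\geq 1$ and a minimal free resolution has $F_{p}\neq 0$ with the last differential $\partial\colon F_{p}\to F_{p-1}$ injective and with all entries in $\mathfrak{m}$. For a basis vector $e$ of $F_{p}$ we then have $\partial(e)\in\mathfrak{m}F_{p-1}$, whence $\partial(ye)=y\,\partial(e)=0$; injectivity of $\partial$ forces $ye=0$, contradicting $y\neq 0$ and $e$ being a basis vector of a free module. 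Hence $\mathcal{M}$ is free, as required. (A coordinate-free alternative would invoke $\depth(\mathcal{M})=\inf\{i:\Ext^{i}_{R}(R/\mathfrak{m},\mathcal{M})\neq 0\}$ and track this index along the long exact $\Ext$-sequence of a syzygy sequence, but the real content is again that minimality forbids freeness from failing.)

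For the inductive step assume $d\geq 1$ and split on $\depth(\mathcal{M})$. If $\depth(\mathcal{M})\geq 1$, then both $R$ and $\mathcal{M}$ have positive depth, so $\mathfrak{m}\notin\Ass(R)\cup\Ass(\mathcal{M})$ and by prime avoidance I can choose $x\in\mathfrak{m}$ that is a nonzerodivisor on both. Passing to $R':=R/xR$ and $\mathcal{M}':=\mathcal{M}/x\mathcal{M}$ lowers the depth of the ring and of the module each by one and preserves the projective dimension (the classical facts $\depth(R')=d-1$, $\depth_{R'}(\mathcal{M}')=\depth(\mathcal{M})-1$, and $\pdim_{R'}(\mathcal{M}')=\pdim_{R}(\mathcal{M})$ for an $R$- and $\mathcal{M}$-regular element). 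The induction hypothesis over $R'$ then yields $\pdim_{R}(\mathcal{M})+\big(\depth(\mathcal{M})-1\big)=d-1$, which is the desired identity.

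It remains to treat $\depth(\mathcal{M})=0$ with $d\geq 1$; here $\mathcal{M}$ is not free, so $p\geq 1$, and I would take a minimal syzygy sequence $0\to N\to F\to\mathcal{M}\to 0$ with $F$ free, $\depth(F)=d$, and $\pdim(N)=p-1$. Feeding $\depth(F)=d\geq 1$ and $\depth(\mathcal{M})=0$ into Lemma~\ref{le01} pins down $\depth(N)=1$: part~(2) gives $\depth(N)\geq\min\{d,1\}=1$, while part~(3) forbids $\depth(N)\geq 2$ since otherwise $\depth(\mathcal{M})\geq\min\{\depth(N)-1,\depth(F)\}\geq 1$. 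As $N$ now has positive depth, the previous case applies to it and gives $\pdim(N)+\depth(N)=d$, i.e. $(p-1)+1=d$, so $\pdim(\mathcal{M})+\depth(\mathcal{M})=d+0=d$. The only genuinely non-formal ingredient in the whole argument is the depth-zero base case; everything else is the Depth Lemma plus standard regular-element reductions.
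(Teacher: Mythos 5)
Your proof is correct and complete: the depth-zero base case (a socle element annihilates the image of the last, minimal differential, forcing any module of finite projective dimension to be free), the regular-element reduction for $\depth(\mathcal{M})\geq 1$, and the syzygy step in which Lemma~\ref{le01}(2),(3) pins down $\depth(N)=1$ are all sound, and the classical transfer facts you invoke ($\depth(R/xR)=\depth(R)-1$, $\depth_{R/xR}(\mathcal{M}/x\mathcal{M})=\depth(\mathcal{M})-1$, and $\pdim_{R/xR}(\mathcal{M}/x\mathcal{M})=\pdim_{R}(\mathcal{M})$, the last via $\Tor^{R}_{1}(\mathcal{M},R/xR)=0$) hold under your hypotheses, with no circularity since the depth-zero-module case only feeds the positive-depth case at the same value of $\depth(R)$. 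The paper itself gives no proof of this lemma --- it is quoted from \cite[Theorem 1.3.3]{depth} --- and your argument is essentially the standard proof found in that cited reference, so there is nothing to reconcile.
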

 \noindent The following lemma combines two result for depth and Stanley depth proved in \cite[Lemma 2.8]{pathsusan} and  \cite[Lemma 4]{stef}, respectively.
\begin{Lemma}\label{paath}
    Let $ q\geq2$. If $ I=I(\mathbb{P}_q)$, then $\depth(S/I)=\sdepth(S/I)=\lceil\frac{q}{3}\rceil.$
\end{Lemma}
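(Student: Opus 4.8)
The plan is to prove both equalities simultaneously by induction on $q$, using the vertex $x_2$ as a ``pivot'' together with the exact Lemmas \ref{exacther} and \ref{exacthersdepth} (which refine the Depth Lemma \ref{le01} and its Stanley-depth analogue). The base cases $q\in\{2,3,4\}$ are checked directly: for instance $S/I(\mathbb{P}_2)=S/(x_1x_2)$ has depth and Stanley depth $1=\lceil 2/3\rceil$, and $q=3,4$ follow from an equally short computation. For the inductive step I would assume the statement for every path on fewer than $q$ vertices, fix $u:=x_2$ (noting $u\notin I$), and feed the colon and the sum ideal into the exact lemmas.

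The heart of the argument is to identify the two auxiliary quotients as smaller path quotients with free variables adjoined. First, $(I:x_2)=(x_1,x_3)+(x_4x_5,\dots,x_{q-1}x_q)$, so that, after killing the variables $x_1,x_3$, the quotient $S/(I:x_2)$ is a copy of $K[x_4,\dots,x_q]/I(\mathbb{P}_{q-3})$ with the single free variable $x_2$ adjoined. By Lemmas \ref{virtens}, \ref{LEMMA1.5} and \ref{111} together with the induction hypothesis this gives $\depth(S/(I:x_2))=\sdepth(S/(I:x_2))=\lceil(q-3)/3\rceil+1=\lceil q/3\rceil$. Second, $(I,x_2)=(x_2)+(x_3x_4,\dots,x_{q-1}x_q)$, so $S/(I,x_2)$ is a copy of $K[x_3,\dots,x_q]/I(\mathbb{P}_{q-2})$ with the free variable $x_1$ adjoined, whence $\depth(S/(I,x_2))=\sdepth(S/(I,x_2))=\lceil(q-2)/3\rceil+1$. (For small $q$ the paths $\mathbb{P}_{q-3},\mathbb{P}_{q-2}$ degenerate, which is exactly why I keep $q=2,3,4$ as base cases so that the induction is well founded.)

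With these two values in hand I would invoke the exact lemmas. The key numerical point is that $\lceil(q-2)/3\rceil+1\geq\lceil q/3\rceil$, which I would verify by a short case analysis on $q\bmod 3$: the residues $0,1,2$ give $\lceil q/3\rceil+1,\ \lceil q/3\rceil,\ \lceil q/3\rceil$ respectively, so the inequality always holds. Hence $\depth(S/(I,x_2))\geq\depth(S/(I:x_2))$, and Lemma \ref{exacther} yields $\depth(S/I)=\depth(S/(I:x_2))=\lceil q/3\rceil$; the same inequality feeds Lemma \ref{exacthersdepth} to give $\sdepth(S/I)=\sdepth(S/(I:x_2))=\lceil q/3\rceil$. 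The only real obstacle is bookkeeping, namely confirming that the degenerate small paths are absorbed by the base cases and checking the residue inequality that powers both exact lemmas; beyond that the depth and Stanley-depth computations run in perfect parallel, which is precisely why the single pivot $x_2$ settles both invariants at once.
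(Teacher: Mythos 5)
Your proof is correct, but there is no internal proof to compare it against: the paper does not prove Lemma \ref{paath} itself, it imports the depth statement from \cite{pathsusan} and the Stanley depth statement from \cite{stef}, merely combining the two citations. Your argument is therefore a genuinely self-contained alternative, and it checks out in every detail: $x_2\notin I$; the identifications $(I:x_2)=(x_1,x_3)+(x_4x_5,\dots,x_{q-1}x_q)$ and $(I,x_2)=(x_2)+(x_3x_4,\dots,x_{q-1}x_q)$ are the correct monomial computations (the generator $x_3x_4$ is absorbed by $x_3$ in the colon), so that $S/(I:x_2)\cong K[x_4,\dots,x_q]/I(\mathbb{P}_{q-3})\tensor_K K[x_2]$ and $S/(I,x_2)\cong K[x_3,\dots,x_q]/I(\mathbb{P}_{q-2})\tensor_K K[x_1]$; the residue check $\lceil\frac{q-2}{3}\rceil+1\geq\lceil\frac{q}{3}\rceil$ is right (with equality for $q\equiv 1,2 \;(\mathrm{mod}\; 3)$ and strict inequality for $q\equiv 0$), and together with $\lceil\frac{q-3}{3}\rceil+1=\lceil\frac{q}{3}\rceil$ it feeds Lemmas \ref{exacther} and \ref{exacthersdepth} exactly as required; and the base cases $q=2,3,4$ make the induction well founded where $\mathbb{P}_{q-3}$ degenerates. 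One point you handled correctly but that deserves emphasis: for the Stanley depth branch the induction must return the \emph{exact} value of $\sdepth(S/(I:x_2))$, which works because Lemma \ref{111} gives equality (not just a bound) when the free variable $x_2$ (resp.\ $x_1$) is adjoined, whereas Lemma \ref{LEMMA1.5} alone would only give an inequality — so your reliance on Lemma \ref{111} rather than \ref{LEMMA1.5} at that step is essential. It is also worth noting that your pivot-and-colon scheme is precisely the technique the paper deploys for its new results (compare the proof of Lemma \ref{The1}, which runs the same colon/sum dichotomy through Lemmas \ref{exacther} and \ref{exacthersdepth} with induction), so what your proof buys is a demonstration that the imported path lemma already follows from the paper's own machinery, at the modest cost of three hand-checked base cases.
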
 	
\begin{Lemma}[{\cite[Proposition 1.3 and Theorem 1.9]{cycledepth}}]\label{cyccc}
    Let $q\geq 3$. If $I=I(\mathbb{C}_{q}),$ then\begin{itemize}
        \item[(a)] 	$\depth(S/I)=\lceil\frac{q-1}{3}\rceil.$ 
          \item[(b)] 	$\sdepth(S/I)=\lceil\frac{q-1}{3}\rceil,$ for $q\equiv 0,2(\mod 3)$ and $$\lceil\frac{q-1}{3}\rceil \leq \sdepth(S/I)\leq \lceil\frac{q}{3}\rceil, \,\,\text{for}\,\, q\equiv 1(\mod 3).$$
    \end{itemize} 
\end{Lemma}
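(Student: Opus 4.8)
The plan is to run the standard vertex-splitting short exact sequence and reduce everything to the already-known path case in Lemma~\ref{paath}. Writing $S=K[x_1,\dots,x_q]$ and $I=I(\mathbb{C}_q)$, I would split off the vertex $x_q$ via
\[
0\longrightarrow S/(I:x_q)\xrightarrow{\;\cdot x_q\;} S/I\longrightarrow S/(I,x_q)\longrightarrow 0 .
\]
Since $x_q$ is adjacent only to $x_1$ and $x_{q-1}$, one has $(I:x_q)=(x_1,x_{q-1})+I(\mathbb{P})$, where $\mathbb{P}$ is the path on $x_2,\dots,x_{q-2}$, while $(I,x_q)=(x_q)+I(\mathbb{P}_{q-1})$ with $\mathbb{P}_{q-1}$ the path on $x_1,\dots,x_{q-1}$. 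Hence $S/(I:x_q)$ is the path quotient on the $q-3$ vertices $x_2,\dots,x_{q-2}$ with $x_1,x_{q-1}$ killed and the extra free variable $x_q$, and $S/(I,x_q)$ is the path quotient on $q-1$ vertices. Combining Lemma~\ref{paath} with Lemma~\ref{111} (for the free variable $x_q$) and Lemma~\ref{LEMMA1.5} then gives
\[
\depth\bigl(S/(I:x_q)\bigr)=\sdepth\bigl(S/(I:x_q)\bigr)=\Bigl\lceil\tfrac{q-3}{3}\Bigr\rceil+1=\Bigl\lceil\tfrac{q}{3}\Bigr\rceil,
\]
\[
\depth\bigl(S/(I,x_q)\bigr)=\sdepth\bigl(S/(I,x_q)\bigr)=\Bigl\lceil\tfrac{q-1}{3}\Bigr\rceil .
\]

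Next I would sandwich $\depth(S/I)$ and $\sdepth(S/I)$ between these two numbers. For the lower bounds, feeding the values above into the Depth Lemma~\ref{le01}(1) and into its Stanley-depth analogue Lemma~\ref{le1} yields $\depth(S/I)\ge\lceil(q-1)/3\rceil$ and $\sdepth(S/I)\ge\lceil(q-1)/3\rceil$, because $\lceil(q-1)/3\rceil\le\lceil q/3\rceil$. For the upper bounds, $x_q\notin I$, so Lemma~\ref{Cor7} and Lemma~\ref{Pro7} give the complementary inequalities
\[
\depth(S/I)\le\depth\bigl(S/(I:x_q)\bigr)=\Bigl\lceil\tfrac{q}{3}\Bigr\rceil,\qquad \sdepth(S/I)\le\sdepth\bigl(S/(I:x_q)\bigr)=\Bigl\lceil\tfrac{q}{3}\Bigr\rceil .
\]
When $q\equiv 0,2\pmod 3$ one has $\lceil q/3\rceil=\lceil(q-1)/3\rceil$, so the two bounds coincide and both invariants equal $\lceil(q-1)/3\rceil$; this settles (a) and the equality in (b) for these residues. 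For $q\equiv 1\pmod 3$ the same two inequalities already give the stated range $\lceil(q-1)/3\rceil\le\sdepth(S/I)\le\lceil q/3\rceil$, so the whole Stanley-depth statement (b) drops out of the elementary lemmas alone; nothing deeper is needed for the $\sdepth$ part.

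The one genuinely hard point is the exact value of $\depth(S/I)$ when $q\equiv 1\pmod 3$, where the sandwich leaves the gap $\lceil(q-1)/3\rceil\le\depth(S/I)\le\lceil(q-1)/3\rceil+1$ and the two outer modules of the sequence have depths differing by exactly one, so the Depth Lemma cannot decide between them. I expect this to be the main obstacle. I would break the tie by proving $H^{\,k}_{\mathfrak m}(S/I)\neq 0$ for $k=\lceil(q-1)/3\rceil$, which forces $\depth(S/I)=k$. Via the long exact sequence in local cohomology attached to the short exact sequence above, and using that $H^{\,k}_{\mathfrak m}(S/(I:x_q))=0$ since $\depth(S/(I:x_q))=k+1$, this reduces to showing that the connecting map $H^{\,k}_{\mathfrak m}(S/(I,x_q))\to H^{\,k+1}_{\mathfrak m}(S/(I:x_q))$ is not injective. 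Equivalently, by Reisner's and Hochster's formulae the question becomes purely topological: it suffices that the independence complex $\mathrm{Ind}(\mathbb{C}_q)$ be homotopy equivalent to a sphere of dimension $\lceil(q-1)/3\rceil-1$ (as it is for $q\not\equiv 0\pmod 3$), so that $\tilde H_{\,k-1}(\mathrm{Ind}(\mathbb{C}_q);K)\neq 0$. Establishing this homotopy type by a fold/discrete-Morse collapse of $\mathrm{Ind}(\mathbb{C}_q)$ is the only nontrivial ingredient; an alternative, more self-contained route is an induction on $q$ with base cases $\mathbb{C}_3,\mathbb{C}_4,\mathbb{C}_5$ in which the connecting homomorphism is tracked directly rather than through the complex's homotopy type.
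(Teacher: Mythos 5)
First, a point of calibration: the paper does not prove Lemma~\ref{cyccc} at all --- it is quoted from Cimpoeas \cite{cycledepth}, and the paper's own Lemma~\ref{lemiso} is explicitly modeled on the technique of that source. So the relevant comparison is with Cimpoeas's argument. Your elementary half is correct and complete as far as it goes: the isomorphisms $S/(I:x_q)\cong\bigl(K[x_2,\dots,x_{q-2}]/I(\mathbb{P}_{q-3})\bigr)\otimes_K K[x_q]$ and $S/(I,x_q)\cong K[x_1,\dots,x_{q-1}]/I(\mathbb{P}_{q-1})$ hold, Lemmas~\ref{paath} and~\ref{111} give the depths and Stanley depths $\lceil q/3\rceil$ and $\lceil (q-1)/3\rceil$, and the sandwich via Lemmas~\ref{le01}, \ref{le1}, \ref{Cor7}, \ref{Pro7} settles all of part~(b) and part~(a) for $q\equiv 0,2\pmod 3$. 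You also correctly diagnose why your sequence stalls at $q\equiv 1\pmod 3$: there the sub has depth $k+1$ and the quotient depth $k$ (where $k=\lceil(q-1)/3\rceil$), and Lemma~\ref{le01}(2) applied to this configuration yields no information.

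Your proposed resolution of the hard case is sound in principle --- Hochster's formula in multidegree $\mathbf{0}$ gives $\dim_K H^{k}_{\mathfrak{m}}(S/I)_{0}=\dim_K\tilde{H}_{k-1}(\mathrm{Ind}(\mathbb{C}_q);K)$, and $\mathrm{Ind}(\mathbb{C}_{3k+1})$ is indeed homotopy equivalent to $S^{k-1}$ (a theorem of Kozlov), so $\depth(S/I)\leq k$ follows --- but as written it is a reduction, not a proof: the fold/discrete-Morse collapse you defer \emph{is} the entire content of the remaining claim, and it is by far the heaviest machinery in sight. The source closes this case with the same elementary tools you already used, by choosing the other short exact sequence: $0\to (I:x_q)/I\to S/I\to S/(I:x_q)\to 0$. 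By the decomposition that this paper abstracts as Lemma~\ref{lemiso}, with $N_{\mathbb{C}_q}(x_q)=\{x_1,x_{q-1}\}$, the module $(I:x_q)/I$ splits as a direct sum of two path-type pieces, each with one free variable, of depths $\lceil\frac{q-3}{3}\rceil+1$ and $\lceil\frac{q-4}{3}\rceil+1$; hence $\depth\bigl((I:x_q)/I\bigr)=\lceil\frac{q-1}{3}\rceil=k$ \emph{exactly}. Since the quotient $S/(I:x_q)$ has depth $k+1$, Lemma~\ref{le01}(2) applied to the submodule gives $k\geq\min\{\depth(S/I),\,k+2\}$, which forces $\depth(S/I)\leq k$ --- precisely the upper bound your sequence could not produce, with no topology at all. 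In short: your proof of (b) and of (a) for $q\equiv 0,2\pmod 3$ is complete; for $q\equiv 1\pmod 3$ you correctly isolate the obstruction but leave the decisive input unproven, and the colon-module sequence with Lemma~\ref{lemiso} is the cheap way to finish.
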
 
\begin{Lemma}[{\cite[Theorem 2.6 and Theorem 2.7]{AA}}]\label{leAli}
			Let $q\geq 2$. If $I=I(\mathbb{S}_{q})$, then $$\depth(S/I)=\sdepth(S/I))=1.$$
		\end{Lemma}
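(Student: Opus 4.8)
The plan is to relabel the vertices so that $x_1$ is the unique internal vertex of $\mathbb{S}_q$ and $x_2,\dots,x_q$ are the $q-1$ leaves incident on it. Since every edge of $\mathbb{S}_q$ is of the form $\{x_1,x_j\}$, the edge ideal is $I=I(\mathbb{S}_q)=(x_1x_2,x_1x_3,\dots,x_1x_q)=x_1(x_2,\dots,x_q)$. The entire argument reduces to studying the two ideals obtained from $I$ by coloning and by adding the monomial $u=x_1$; note that $u=x_1\notin I$ because every generator of $I$ has degree two, so the hypotheses of Lemmas~\ref{exacther} and~\ref{exacthersdepth} are applicable.

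First I would identify these two ideals explicitly. As $S$ is an integral domain and $I=x_1(x_2,\dots,x_q)$, a monomial $f$ lies in $(I:x_1)$ if and only if $f\in(x_2,\dots,x_q)$, whence $(I:x_1)=(x_2,\dots,x_q)$; and $(I,x_1)=(x_1)$ since each generator $x_1x_j$ is a multiple of $x_1$. Therefore $S/(I:x_1)=S/(x_2,\dots,x_q)\cong K[x_1]$ and $S/(I,x_1)=S/(x_1)\cong K[x_2,\dots,x_q]$. Viewing these as $S$-modules and using Lemma~\ref{LEMMA1.5} together with the elementary values $\depth(S)=\sdepth(S)=q$ for polynomial rings, one finds $\depth(S/(I:x_1))=\sdepth(S/(I:x_1))=1$ and $\depth(S/(I,x_1))=\sdepth(S/(I,x_1))=q-1$.

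With these numbers in hand the result is immediate. For depth, Lemma~\ref{exacther} asserts $\depth(S/I)=\depth(S/(I:x_1))$ as soon as $\depth(S/(I,x_1))\geq\depth(S/(I:x_1))$; since $q-1\geq1$ for all $q\geq2$, this inequality holds and hence $\depth(S/I)=1$. Replacing depth by Stanley depth and invoking Lemma~\ref{exacthersdepth} in exactly the same way (again using $q-1\geq1$) gives $\sdepth(S/I)=\sdepth(S/(I:x_1))=1$. This establishes both equalities at once.

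Because every step is a routine colon computation, there is no genuine obstacle here; the only points deserving attention are the correct determination of $(I:x_1)$ and $(I,x_1)$ and the verification that the numerical hypothesis $q-1\geq1$ of the two equality lemmas holds for every $q\geq2$. Should one wish to bypass Lemmas~\ref{exacther} and~\ref{exacthersdepth}, the bounds $\depth(S/I)\geq1$ and $\sdepth(S/I)\geq1$ also follow from the short exact sequence $0\to S/(I:x_1)\to S/I\to S/(I,x_1)\to0$ via Lemma~\ref{le01}(1) and Lemma~\ref{le1}, while the reverse bound $\depth(S/I)\leq1$ comes from observing that the minimal vertex cover $\{x_2,\dots,x_q\}$ makes $(x_2,\dots,x_q)$ an associated prime of $S/I$ with $\dim S/(x_2,\dots,x_q)=1$.
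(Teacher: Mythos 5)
Your proof is correct, but there is nothing to compare it against inside the paper: Lemma \ref{leAli} is imported verbatim from Alipour and Tehranian \cite[Theorems 2.6 and 2.7]{AA}, and the paper gives no proof of it. What you have produced is a short self-contained substitute that uses precisely the machinery this paper deploys everywhere else (compare the proof of Lemma \ref{The1}): with $x_1$ the center, $I=x_1(x_2,\dots,x_q)$, the colon and sum ideals are $(I:x_1)=(x_2,\dots,x_q)$ and $(I,x_1)=(x_1)$, the two quotients are $K[x_1]$ and $K[x_2,\dots,x_q]$ with depth and Stanley depth $1$ and $q-1$ respectively, and since $q-1\geq 1$ the hypotheses of Lemmas \ref{exacther} and \ref{exacthersdepth} hold (note $x_1\notin I$, as $I$ is generated in degree $2$), giving both equalities at once. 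Two minor points of hygiene: the appeal to Lemma \ref{LEMMA1.5} is unnecessary and, for Stanley depth, that lemma only yields an inequality in general — here the exact values are immediate, since for $S/(x_2,\dots,x_q)$ every Stanley space $uK[Z]$ must have $Z\subseteq\{x_1\}$ (any $x_j\in Z$ with $j\geq 2$ would kill freeness), so $\sdepth=1$ on the nose, and $S/(x_1)\cong K[x_2,\dots,x_q]$ has $\depth=\sdepth=q-1$ by the paper's observation that $\depth(S)=\sdepth(S)=q$. Your fallback argument is also sound for depth: $(x_2,\dots,x_q)$ is a minimal prime of $I$ (it corresponds to the minimal vertex cover $\{x_2,\dots,x_q\}$ of $\mathbb{S}_q$), whence $\depth(S/I)\leq\dim S/(x_2,\dots,x_q)=1$; but as stated it leaves the Stanley depth upper bound unaddressed — if you want to bypass Lemma \ref{exacthersdepth} there too, Lemma \ref{Pro7} gives $\sdepth(S/I)\leq\sdepth(S/(I:x_1))=1$ directly.
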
 \noindent We recall the following result  proved in  \cite[Corollary 10.3.7]{herhib} for depth and for Stanley depth in \cite[Theorem 1.1]{MC8}. \begin{Lemma}\label{com}
			Let $q\geq 2$. If $I=I(\mathbb{K}_{q})$, then $\depth(S/I)=\sdepth(S/I)=1.$
		\end{Lemma}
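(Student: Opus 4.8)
The plan is to prove the formula $\depth(S/I(\mathbb{K}_q)) = \sdepth(S/I(\mathbb{K}_q)) = 1$ for the complete graph $\mathbb{K}_q$ with $q \geq 2$. Since this is a known result cited from \cite[Corollary 10.3.7]{herhib} and \cite[Theorem 1.1]{MC8}, I would give a self-contained argument using the inductive machinery already assembled in the Preliminaries. The edge ideal is $I(\mathbb{K}_q) = (x_i x_j : 1 \leq i < j \leq q)$, the ideal generated by all squarefree degree-two monomials, so $S/I$ is essentially the Stanley--Reisner ring whose only nonzero squarefree pieces beyond the constants are the single variables.

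First I would establish the depth. The cleanest route is the short exact sequence
\[
0 \longrightarrow S/(I : x_q) \stackrel{\cdot x_q}{\longrightarrow} S/I \longrightarrow S/(I, x_q) \longrightarrow 0.
\]
Here $(I : x_q) = (x_1, \dots, x_{q-1})$, since multiplying $x_q$ by any other variable lands in $I$; thus $S/(I:x_q) \cong K[x_q]$ up to the other variables being killed, giving $\depth S/(I:x_q) = 1$. On the other side, $(I, x_q) = I(\mathbb{K}_{q-1}) + (x_q)$, and by Lemma \ref{111} its depth equals that of $S'/I(\mathbb{K}_{q-1})$ over the smaller ring, which by induction is $1$. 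The base case $q = 2$ is the single edge $I = (x_1 x_2)$, where $S/I$ has depth $1$ directly. Applying the Depth Lemma (Lemma \ref{le01}) to the sequence forces $\depth(S/I) = 1$: part (1) gives $\geq 1$, and since both outer terms have depth $1$, a short argument (or Lemma \ref{exacther}) pins it exactly at $1$.

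For the Stanley depth, I would argue the lower bound $\sdepth(S/I) \geq 1$ via the analogous inductive scheme using Lemma \ref{le1} together with Lemma \ref{Pro7} and Lemma \ref{111} for Stanley depth, mirroring the depth computation. The matching upper bound $\sdepth(S/I) \leq 1$ then follows from the converse direction of Lemma \ref{stan1}: since $\depth(S/I) = 1 \neq 0$ I cannot invoke it directly, so instead I would observe that $S/I$ has Krull dimension $1$ (the only minimal primes are the $(x_1,\dots,\widehat{x_i},\dots,x_q)$, each of height $q-1$), and for any module the Stanley depth is bounded above by the Krull dimension; combining $1 \leq \sdepth \leq \dim = 1$ closes the gap.

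The main obstacle I anticipate is handling the upper bound for Stanley depth cleanly, since the general inequality $\sdepth(\mathcal{M}) \leq \dim(\mathcal{M})$ is a standard but separate fact not explicitly stated in the excerpt. If I wanted to stay strictly within the cited lemmas, the alternative is to exhibit an explicit Stanley decomposition of $S/I(\mathbb{K}_q)$ of Stanley depth exactly $1$: writing $S/I$ as the $K$-span of $1$ and the pure powers $x_i^k$, one takes the decomposition $K \oplus \bigoplus_{i=1}^q x_i K[x_i]$, which is visibly a direct sum of Stanley spaces of dimensions $0$ and $1$. The delicate point is that the summand $K = 1 \cdot K[\emptyset]$ has dimension $0$, so this particular decomposition only witnesses $\sdepth \geq 0$; one must merge the constant into one of the $x_i K[x_i]$ pieces or argue more carefully that no decomposition can do better than $1$, which again reduces to the dimension bound. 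I therefore expect the Krull-dimension argument to be the linchpin of the Stanley-depth half.
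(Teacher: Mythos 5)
Your proof is essentially correct, but note that the paper does not prove Lemma \ref{com} at all: it is quoted from the literature, with the depth statement attributed to \cite[Corollary 10.3.7]{herhib} and the Stanley depth statement to \cite[Theorem 1.1]{MC8} (the ideal $I(\mathbb{K}_q)$ being the squarefree Veronese ideal of degree $2$). So your self-contained inductive argument is a genuinely different route, and a sound one: the colon computation $(I:x_q)=(x_1,\dots,x_{q-1})$, hence $S/(I:x_q)\cong K[x_q]$ of depth $1$, together with $S/(I,x_q)\cong K[x_1,\dots,x_{q-1}]/I(\mathbb{K}_{q-1})$ and induction, pins the depth at $1$ via Lemma \ref{exacther}. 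Two small repairs. First, your appeal to Lemma \ref{111} for $S/(I,x_q)$ is misplaced: that lemma adds a variable, whereas here $x_q$ is killed, and the isomorphism with the smaller complete-graph quotient holds directly with no depth shift; as written your invocation would spuriously add $1$. Second, you do not need the general inequality $\sdepth \leq \dim$ (which, as you note, is not among the cited lemmas): for $I=I(\mathbb{K}_q)$ any Stanley space $uK[Z]\subseteq S/I$ with two distinct variables $z_1,z_2\in Z$ would have to contain $uz_1z_2$, a monomial whose support meets at least two vertices and hence lies in $I$, contradicting freeness; so every Stanley space in any decomposition has $|Z|\leq 1$ and $\sdepth(S/I)\leq 1$ follows in one line. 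Alternatively, the merged decomposition you gesture at, $S/I = K[x_1]\oplus\bigoplus_{i=2}^{q} x_iK[x_i]$, gives $\sdepth\geq 1$ directly, and Lemma \ref{exacthersdepth} applied to $u=x_q$ (with $\sdepth(S/(I,x_q))=1\geq 1=\sdepth(S/(I:x_q))$ by induction) even yields the exact value without any dimension bound. What your approach buys is a proof entirely inside the paper's toolkit, matching the colon/quotient induction pattern used throughout Section 3; what the paper's citation buys is brevity and, in the case of \cite{MC8}, a result covering all squarefree Veronese ideals rather than just degree $2$.
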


\section{Invariants of cyclic modules associated to some supergraphs of ladder graph}
For $n\geq 2,$ the graph $A_n$ as shown in Figure \ref{fig:1} is called  ladder graph  on $2n$ vertices. We introduce some supergraphs of ladder graph namely $B_n,C_n$ and $D_n$ that play a significant role in our main results. It will be convenient to label the vertices of the aforementioned graphs as shown in Figure \ref{fig:1} and Figure \ref{fig:2}. The vertex sets and edge sets of these graphs are:
 \begin{itemize}
     
     \item $V(A_n)=\underset{i=1}{\overset{n}{\cup}}\{x_i,y_i\},$  $E(A_{n})=\underset{i=1}{\overset{n-1}{\cup}}\{\{x_i,y_i\}, \{x_i,x_{i+1}\},\{y_i,y_{i+1}\} \}\cup \{x_n,y_n\},$
     \item $V(B_n)=V(A_n)\cup \{y_{n+1}\},$ $E(B_n)=E(A_n)\cup \{y_n,y_{n+1}\},$ 
     \item $V(C_{n})=V(A_{n})\cup\big\{y_{n+1},y_{n+2}\big\},$ $E(C_{n})=E(A_{n})\cup\big\{\{y_{n},y_{n+1}\},\{y_{1},y_{n+2}\}\big\},$ 
     \item $V(D_{n})=V(A_{n})\cup\big\{x_{n+1},y_{n+1}\big\},$    $E(D_{n})=E(A_{n})\cup\big\{\{y_{n},y_{n+1}\},\{x_{1},x_{n+1}\}\big\}.$
 \end{itemize}

\begin{figure}[H]
	\centering
	\begin{subfigure}[b]{0.45\textwidth}
		\centering
	\[\begin{tikzpicture}[x=0.8cm, y=0.7cm]
 	\vertex[fill] (6) at (0,0) [label=below:${x_{1}}$] {};
 	\vertex[fill] (7) at (1,0) [label=below:${x_{2}}$] {};
 	\vertex[fill] (8) at (2,0) [label=below:${x_{3}}$] {};
 	\vertex[fill] (9) at (3,0) [label=below:${x_{4}}$] {};
 	\vertex[fill] (10) at (4,0) [label=below:${x_{n-2}}$] {};
  \vertex[fill] (11) at (5,0) [label=below:${x_{n-1}}$] {};
   \vertex[fill] (12) at (6,0) [label=below:${x_{n}}$] {};
 	\vertex[fill] (6a) at (0,1) [label=above:${y_{1}}$] {};
 	\vertex[fill] (7a) at (1,1) [label=above:${y_{2}}$] {};
 	\vertex[fill] (8a) at (2,1) [label=above:${y_{3}}$] {};
 	\vertex[fill] (9a) at (3,1) [label=above:${y_{4}}$] {};
 	\vertex[fill] (10a) at (4,1) [label=above:${y_{n-2}}$] {};
 	\vertex[fill] (11a) at (5,1) [label=above:${y_{n-1}}$] {};
  \vertex[fill] (12a) at (6,1) [label=above:${y_{n}}$] {};
  \draw (3,1) node {} -- (4,1) [dashed] node {};
	\draw (3,0) node {} -- (4,0) [dashed] node {};
 	\path 
 	(6) edge (7)
 	(7) edge (8)
 	(8) edge (9)
 	(6a) edge (7a)
 	(7a) edge (8a)
 	(8a) edge (9a)
 	(6) edge (6a)
  (10) edge (11)
  (11) edge (12)
  (11) edge (11a)
  (11a) edge (12a)
  (12) edge (12a)
 	(7) edge (7a)
 	(8) edge (8a)
 	(9) edge (9a)
 	(10) edge (10a)
 	(11a) edge (10a)
 	;
 	\end{tikzpicture}\]
	\end{subfigure}
	\hfill
	\begin{subfigure}[b]{0.45\textwidth}
		\centering
	\[\begin{tikzpicture}[x=0.8cm, y=0.7cm]
 	\vertex[fill] (6) at (0,0) [label=below:${x_{1}}$] {};
 	\vertex[fill] (7) at (1,0) [label=below:${x_{2}}$] {};
 	\vertex[fill] (8) at (2,0) [label=below:${x_{3}}$] {};
 	\vertex[fill] (9) at (3,0) [label=below:${x_{4}}$] {};
 	\vertex[fill] (10) at (4,0) [label=below:${x_{n-2}}$] {};
  \vertex[fill] (11) at (5,0) [label=below:${x_{n-1}}$] {};
   \vertex[fill] (12) at (6,0) [label=below:${x_{n}}$] {};
 	\vertex[fill] (6a) at (0,1) [label=above:${y_{1}}$] {};
 	\vertex[fill] (7a) at (1,1) [label=above:${y_{2}}$] {};
 	\vertex[fill] (8a) at (2,1) [label=above:${y_{3}}$] {};
 	\vertex[fill] (9a) at (3,1) [label=above:${y_{4}}$] {};
 	\vertex[fill] (10a) at (4,1) [label=above:${y_{n-2}}$] {};
 	\vertex[fill] (11a) at (5,1) [label=above:${y_{n-1}}$] {};
  \vertex[fill] (12a) at (6,1) [label=above:${y_{n}}$] {};
  \vertex[fill] (13a) at (7,1) [label=above:${y_{n+1}}$] {};
  \draw (3,1) node {} -- (4,1) [dashed] node {};
	\draw (3,0) node {} -- (4,0) [dashed] node {};
 	\path 
 	(6) edge (7)
 	(7) edge (8)
 	(8) edge (9)
 	(6a) edge (7a)
 	(7a) edge (8a)
 	(8a) edge (9a)
 	(6) edge (6a)
  (10) edge (11)
  (11) edge (12)
  (12a) edge (13a)
  (11) edge (11a)
  (11a) edge (12a)
  (12) edge (12a)
 	(7) edge (7a)
 	(8) edge (8a)
 	(9) edge (9a)
 	(10) edge (10a)
 	(11a) edge (10a)
 	;
 	\end{tikzpicture}\]
	\end{subfigure}
	\hfill
	\caption{From left to right $A_n $ and $ B_n $.}
	\label{fig:1}
\end{figure}
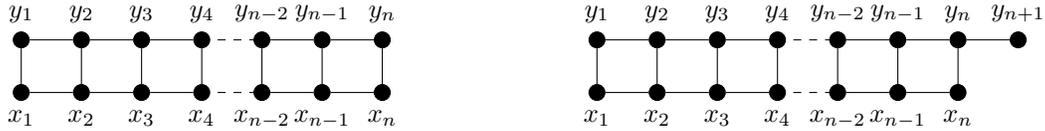
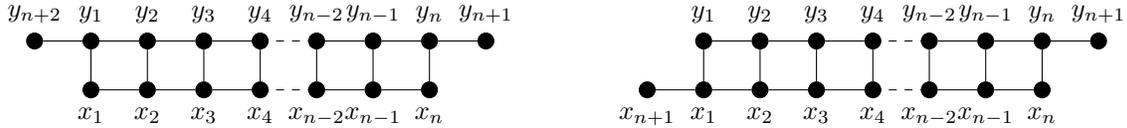
\begin{figure}[H]
	\centering
	\begin{subfigure}[b]{0.45\textwidth}
		\centering
	\[\begin{tikzpicture}[x=0.75cm, y=0.65cm]
 	\vertex[fill] (6) at (0,0) [label=below:${x_{1}}$] {};
 	\vertex[fill] (7) at (1,0) [label=below:${x_{2}}$] {};
 	\vertex[fill] (8) at (2,0) [label=below:${x_{3}}$] {};
 	\vertex[fill] (9) at (3,0) [label=below:${x_{4}}$] {};
 	\vertex[fill] (10) at (4,0) [label=below:${x_{n-2}}$] {};
  \vertex[fill] (11) at (5,0) [label=below:${x_{n-1}}$] {};
   \vertex[fill] (12) at (6,0) [label=below:${x_{n}}$] {};
       \vertex[fill] (66a) at (-1,1) [label=above:${y_{n+2}}$] {};
 	\vertex[fill] (6a) at (0,1) [label=above:${y_{1}}$] {};
 	\vertex[fill] (7a) at (1,1) [label=above:${y_{2}}$] {};
 	\vertex[fill] (8a) at (2,1) [label=above:${y_{3}}$] {};
 	\vertex[fill] (9a) at (3,1) [label=above:${y_{4}}$] {};
 	\vertex[fill] (10a) at (4,1) [label=above:${y_{n-2}}$] {};
 	\vertex[fill] (11a) at (5,1) [label=above:${y_{n-1}}$] {};
  \vertex[fill] (12a) at (6,1) [label=above:${y_{n}}$] {};
  \vertex[fill] (13a) at (7,1) [label=above:${y_{n+1}}$] {};
  \draw (3,1) node {} -- (4,1) [dashed] node {};
	\draw (3,0) node {} -- (4,0) [dashed] node {};
 	\path 
 	(6) edge (7)
 	(7) edge (8)
 	(8) edge (9)
 	(6a) edge (7a)
 	(7a) edge (8a)
 	(8a) edge (9a)
 	(6) edge (6a)
  (10) edge (11)
  (11) edge (12)
  (12a) edge (13a)
  (11) edge (11a)
  (11a) edge (12a)
  (12) edge (12a)
 	(7) edge (7a)
 	(8) edge (8a)
 	(9) edge (9a)
 	(10) edge (10a)
 	(11a) edge (10a)
  (7a) edge (66a)
 	;
 	\end{tikzpicture}\]
	\end{subfigure}
	\hfill
	\begin{subfigure}[b]{0.45\textwidth}
		\centering
	\[\begin{tikzpicture}[x=0.75cm, y=0.65cm]
 \vertex[fill] (66) at (-1,0) [label=below:${x_{n+1}}$] {};
 	\vertex[fill] (6) at (0,0) [label=below:${x_{1}}$] {};
 	\vertex[fill] (7) at (1,0) [label=below:${x_{2}}$] {};
 	\vertex[fill] (8) at (2,0) [label=below:${x_{3}}$] {};
 	\vertex[fill] (9) at (3,0) [label=below:${x_{4}}$] {};
 	\vertex[fill] (10) at (4,0) [label=below:${x_{n-2}}$] {};
  \vertex[fill] (11) at (5,0) [label=below:${x_{n-1}}$] {};
   \vertex[fill] (12) at (6,0) [label=below:${x_{n}}$] {};
 	\vertex[fill] (6a) at (0,1) [label=above:${y_{1}}$] {};
 	\vertex[fill] (7a) at (1,1) [label=above:${y_{2}}$] {};
 	\vertex[fill] (8a) at (2,1) [label=above:${y_{3}}$] {};
 	\vertex[fill] (9a) at (3,1) [label=above:${y_{4}}$] {};
 	\vertex[fill] (10a) at (4,1) [label=above:${y_{n-2}}$] {};
 	\vertex[fill] (11a) at (5,1) [label=above:${y_{n-1}}$] {};
  \vertex[fill] (12a) at (6,1) [label=above:${y_{n}}$] {};
  \vertex[fill] (13a) at (7,1) [label=above:${y_{n+1}}$] {};
  \draw (3,1) node {} -- (4,1) [dashed] node {};
	\draw (3,0) node {} -- (4,0) [dashed] node {};
 	\path 
 	(6) edge (7)
  (6) edge (66)
 	(7) edge (8)
 	(8) edge (9)
 	(6a) edge (7a)
 	(7a) edge (8a)
 	(8a) edge (9a)
 	(6) edge (6a)
  (10) edge (11)
  (11) edge (12)
  (12a) edge (13a)
  (11) edge (11a)
  (11a) edge (12a)
  (12) edge (12a)
 	(7) edge (7a)
 	(8) edge (8a)
 	(9) edge (9a)
 	(10) edge (10a)
 	(11a) edge (10a)
 	;
 	\end{tikzpicture}\]
	\end{subfigure}
	\hfill
	\caption{From left to right $C_n $ and $ D_n $.}
	\label{fig:2}
\end{figure} \noindent In this section, we compute  the values of depth, Stanley depth and projective dimension of the cyclic modules  $K[V(B_n)]/I(B_n),$  $K[V(C_n)]/I(C_n)$ and $K[V(D_n)]/I(D_n).$ These values play a vital role in our main results in last section. For cyclic module $K[V(A_n)]/I(A_n),$ we give exact value of Stanley depth when $n\equiv 1(\mod 2)$  and find sharp bounds when $n\equiv 0(\mod 2).$  First, we prove the following lemma that will help in computing depth and lower bound for Stanley depth, throughout the paper. As mentioned earlier this lemma is inspired by {\cite[Proposition 1.3]{cycledepth}}.
\begin{Lemma}\label{lemiso}
    Let $G$ be a  connected graph with $V(G)=\{x_1,\dots, x_n\}.$ If $N_G(x_i)=\{x_{i_1},\dots, x_{i_l}\}$, then $$(I(G):x_i)/I(G)\cong \bigoplus_{t=1}^{l} S_t/J_t[x_{i_t}],$$ where
     $S_1=K[V(G)\backslash N_G(x_{i_1})]$ for $t\geq 2$, $S_t=K[V(G)\backslash \big(N_G(x_{i_t})\cup \{x_{i_1},x_{i_2},\dots,x_{i_{t-1}}\}\big)]$ and for $t\geq 1$,  $J_t=(S_t \cap I(G)).$
     
\end{Lemma}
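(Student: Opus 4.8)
The plan is to prove the asserted isomorphism as one of $\mathbb{Z}^n$-graded $K$-vector spaces, by producing an explicit monomial $K$-basis of $(I(G):x_i)/I(G)$ and then partitioning it according to the smallest-indexed neighbor of $x_i$ dividing each basis monomial. First I would record the shape of the colon ideal. Since $I(G)$ is an edge ideal, a monomial $m$ lies in $(I(G):x_i)$ exactly when $mx_i$ is divisible by some edge $x_ax_b$; splitting into the cases where this edge is or is not incident to $x_i$ gives $(I(G):x_i)=(x_{i_1},\dots,x_{i_l})+I(G)$. Consequently a monomial $m$ represents a nonzero class in $(I(G):x_i)/I(G)$ if and only if $m\notin I(G)$ and $x_{i_t}\mid m$ for at least one $t$; equivalently, $\operatorname{supp}(m)$ is an independent set of $G$ containing some neighbor of $x_i$. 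These monomials form a $\mathbb{Z}^n$-homogeneous $K$-basis $\mathcal{B}$ of the quotient.

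Next I would partition $\mathcal{B}$. For $m\in\mathcal{B}$ let $t(m)$ be the least index $t$ with $x_{i_t}\mid m$, and set $B_t=\{m\in\mathcal{B}:t(m)=t\}$, so that $\mathcal{B}=\bigsqcup_{t=1}^{l}B_t$ and hence $(I(G):x_i)/I(G)=\bigoplus_{t=1}^{l}\langle B_t\rangle_K$ as $\mathbb{Z}^n$-graded $K$-vector spaces. The heart of the argument is to identify $\langle B_t\rangle_K$ with $S_t/J_t[x_{i_t}]$. Fix $t$ and $m\in B_t$: divisibility by $x_{i_t}$ together with independence of the support forces $\operatorname{supp}(m)\cap N_G(x_{i_t})=\emptyset$, while minimality of $t$ forces $x_{i_1},\dots,x_{i_{t-1}}\nmid m$; thus $\operatorname{supp}(m)\subseteq V(G)\setminus\big(N_G(x_{i_t})\cup\{x_{i_1},\dots,x_{i_{t-1}}\}\big)=V(S_t)$, so $m$ is a monomial of $S_t$, and conversely every monomial of $S_t$ divisible by $x_{i_t}$ with independent support lies in $B_t$. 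Because $N_G(x_{i_t})$ has been deleted, $x_{i_t}$ is an isolated vertex of the subgraph induced on $V(S_t)$, so it occurs in no generator of $J_t=S_t\cap I(G)$ and plays the role of a free variable, while the independence condition on the support is exactly $m\notin J_t$. One then checks that $\langle B_t\rangle_K$ is closed under multiplication by every variable of $S_t$ (such a product either lands again in $B_t$ or falls into $I(G)$), so $\langle B_t\rangle_K$ is an $S_t$-submodule, and $m\mapsto m$ is a degree-preserving $S_t$-linear isomorphism onto $S_t/J_t[x_{i_t}]$, the factor $x_{i_t}$ being absorbed as a harmless degree shift. Summing over $t$ yields the stated decomposition.

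The step I expect to be most delicate is this last identification: one must verify simultaneously that the three membership conditions defining $B_t$ cut out precisely the variable set $V(S_t)$ and precisely the ideal $J_t$, keeping track of the asymmetric roles of the deleted neighbors $N_G(x_{i_t})$ (forced by independence) and of the earlier neighbors $x_{i_1},\dots,x_{i_{t-1}}$ (forced by minimality of $t$), together with the fact that $x_{i_t}$ survives in $S_t$ as an isolated, hence free, variable. I would also be explicit about the category of the isomorphism: the global decomposition is one of $\mathbb{Z}^n$-graded $K$-vector spaces and is \emph{not} an $S$-module isomorphism in general, since multiplying $m\in B_t$ by a neighbor $x_{i_s}$ with $s<t$ may produce a nonzero class in $B_s$ rather than vanish. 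For the applications this is exactly the right amount of structure: gluing Stanley decompositions of the summands gives the lower bound $\sdepth\big((I(G):x_i)/I(G)\big)\ge\min_t\sdepth\big(S_t/J_t[x_{i_t}]\big)$ via Lemma \ref{le1}, and the very same partition, read as the associated graded of the $S$-module filtration of $(I(G):x_i)/I(G)$ whose $t$-th layer consists of the classes with $t(m)\le t$, has successive quotients $\cong S_t/J_t[x_{i_t}]$, which feeds the Depth Lemma \ref{le01} to control the depth.
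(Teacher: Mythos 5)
Your argument is correct, and its combinatorial core coincides with the paper's own proof: the paper likewise observes that every monomial $u\in (I(G):x_i)\setminus I(G)$ is divisible by some neighbor of $x_i$, partitions these monomials according to the smallest index $t$ with $x_{i_t}\mid u$, writes $u=x_{i_t}^{\alpha_t}v_t$ with $v_t\in S_t$ and $v_t\notin J_t$, and finally strips the factor $x_{i_t}$ using that $x_{i_t}$ is regular on $(S_t/J_t)[x_{i_t}]$. Where you genuinely depart from the paper is in the category of the isomorphism, and on this point your caution is warranted: the paper's proof asserts an \emph{$S$-module} isomorphism, but, exactly as you note, multiplication by an earlier neighbor $x_{i_s}$ with $s<t$ can carry a class of the $t$-th block to a nonzero class of the $s$-th block, so the blocks are not $S$-submodules. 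A minimal counterexample to the module-level reading is $G=\mathbb{P}_3$ with $x_i$ its middle vertex: there $(I(G):x_i)/I(G)\cong (x_{i_1},x_{i_2})K[x_{i_1},x_{i_2}]$, which is torsion-free and indecomposable, whereas the right-hand side is, up to degree shifts, $K[x_{i_1},x_{i_2}]\oplus K[x_{i_2}]$ and has $x_{i_1}$-torsion; so no $S$-module isomorphism exists. Your formulation, a $\mathbb{Z}^n$-graded $K$-vector space isomorphism whose $t$-th block carries the $S_t$-structure you verified, is the correct statement, and it suffices for Stanley depth, since Stanley decompositions are vector-space data. Your closing filtration remark is precisely the repair the depth applications need: $F_t=\langle B_1\cup\cdots\cup B_t\rangle_K$ is an $S$-submodule because multiplying by any variable either lands in $I(G)$ or can only decrease $t(m)$, and $F_t/F_{t-1}\cong (S_t/J_t)[x_{i_t}]$ up to shift, so iterating Lemma \ref{le01}(1) gives $\depth\big((I(G):x_i)/I(G)\big)\geq \min_t \depth\big((S_t/J_t)[x_{i_t}]\big)$. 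Be aware, though, of what this buys and what it does not: the filtration yields only the lower bound, whereas the paper later uses the exact equality of this depth with the minimum (for instance in Theorem \ref{The2} and Propositions \ref{mobious} and \ref{circular}, where the equality feeds Lemma \ref{le01}(2) to pin down $\depth(S/I)$); that equality follows from the module-level reading of the Lemma, which is false in general, but not from your (correct) proof, so those downstream equalities require separate justification.
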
 \begin{proof}
    If $u\in (I(G):x_{i})$ is a monomial such that $u\notin I(G),$ then it follows that $u$ is divisible by at least one variable from $N_G(x_i)= \{x_{i_1},\dots, x_{i_l}\}.$ Indeed, if $u$ is not divisible by any of the variables from the set of $N_G(x_{i})$ then $u\in I(G),$ a contradiction. Without loss of generality we may assume that $x_{i_1}|u$ then $u=x^{\alpha_1}_{i_1}v_{1}$ with $\alpha_1\geq 1.$ Since $u\notin I(G)$, it follows that $v_{1}\in S_1=K[V(G) \backslash N_G(x_{i_1})] $ and   $v_1 \notin J_1=(S_1\cap I(G)).$  Thus $u \in x_{i_1} (S_1/J_1)[ x_{i_1}].$ Now, if $x_{i_2}|u$ and $x_{i_1}\nmid u$, then  $u=x^{\alpha_2}_{i_2}v_{2}$ with $\alpha_2\geq 1.$ It follows that $v_{2}\in S_2=K[V(G) \backslash (N_G(x_{i_2})\cup \{x_{i_1}\})]$ and   $v_2 \notin J_2=(S_2\cap I(G)).$  Thus $u \in x_{i_2} (S_2/J_2)[ x_{i_2}].$ In a similar manner,  for  $3\leq t \leq  l,$ if  $x_{i_t}|u$ and $x_{i_1}\nmid u, x_{i_2}\nmid u$,  \dots,  $x_{i_{t-1}}\nmid u$ then  $u=x^{\alpha_t}_{i_t}v_{t}$ with $\alpha_t\geq 1.$ Since $u\notin I(G)$, it follows that $v_{t}\in S_t=K[V(G)\backslash \big(N_G(x_{i_t})\cup  \{x_{i_1},x_{i_2},\dots,x_{i_{t-1}}\}\big)]$ and   $v_t \notin J_t=( S_t\cap I(G)).$  Thus $u \in x_{i_t} (S_t/J_t)[ x_{i_t}]$
and we have the following $S$-module isomorphism
$$(I(G):x_i)/I(G)\cong \bigoplus_{t=1}^{l}x_{i_t} (S_t/J_t)[x_{i_t}].$$ It is easy to see that $ x_{i_t}$ is regular on  $S_t/J_t[x_{i_t}], $ therefore we have $ x_{i_t} (S_t/J_t)[x_{i_t}] \cong (S_t/J_t)[x_{{i_t}}].$ This completes the proof. 
\end{proof}
\begin{Remark}\label{rem1}
   \em{If $n\leq 1$, then we define the quotient rings associated to $A_n$, $B_n$, $C_n$ and $D_n$ appearing in the proofs of this section as follows: 
   \begin{itemize}

       \item $K[V(B_0)]/I(B_0) = K[x]$   and   $\depth(K[x])=\sdepth(K[x])=1.$
       
       \item $ K[V(A_1)]/I(A_1) = K[V(\mathbb{P}_2)]/I(\mathbb{P}_2) $ and  by using Lemma \ref{paath},  $\depth(K[V(\mathbb{P}_2)]/I(\mathbb{P}_2)) =\sdepth(K[V(\mathbb{P}_2)]/I(\mathbb{P}_2))= 1.$
       \item  $K[V(B_1)]/I(B_1)= K[V(\mathbb{P}_3)]/I(\mathbb{P}_3),$ by Lemma \ref{paath} we have  $\depth( K[V(\mathbb{P}_3)]/I(\mathbb{P}_3))=\sdepth( K[V(\mathbb{P}_3)]/I(\mathbb{P}_3))=1.$ 
       
       \item $K[V(C_1)]/I(C_1)= K[V(\mathbb{S}_4)]/I(\mathbb{S}_4)$ and by Lemma \ref{leAli}, we get $ \depth(K[V(\mathbb{S}_4)]/I(\mathbb{S}_4))=\sdepth(K[V(\mathbb{S}_4)]/I(\mathbb{S}_4))=1.$
       \item $K[V(D_1)]/I(D_1)= K[V(\mathbb{P}_4)]/I(\mathbb{P}_4)$ and by using Lemma \ref{paath},  $\depth(K[V(\mathbb{P}_4)]/I(\mathbb{P}_4))=\sdepth(K[V(\mathbb{P}_4)]/I(\mathbb{P}_4))=2.$ 
   \end{itemize}}\end{Remark} 


\noindent Let $\mathbb{G}(I)$ denotes the minimal set of monomial generators of monomial ideal $I.$ For a monomial ideal $I$, $\supp{(I)}:=\{x_{i}:x_{i}|u\, \,  \text{for some} \,\,  u \in \mathbb{G}(I)\}.$	
	\begin{Remark}
{\em Let $I$ be a squarefree monomial ideal of $S$ minimally generated by monomials of degree at most $2.$ We associate a graph $G_{I}$ to the ideal $I$ with $V(G_I)=\supp(I)$  and $E(G_I)=\{\{x_{i},x_{j}\}:x_{i}x_{j}\in \mathbb{G}(I)\}.$ Let $x_{t},x_l \in S$ be a variable of the polynomial ring $S$ such that $x_{t},x_l \notin I.$ Then $(I:x_{t}),$  $(I,x_{t}),$  $((I,x_{t}),x_l)$ and  $((I,x_{t}):x_l)$ are the monomial ideals of $S$ such that $G_{(I:x_{t})},$ $G_{(I,x_{t})},$ $G_{((I,x_{t}),x_l)}$ and $G_{((I,x_{t}):x_l)}$ are subgraphs of $G_{I}.$ 
By using the labeling of Figure \ref{fig:2},  see for instance; Figure  \ref{fig:11} and \ref{fig:21} as examples of $G_{(I(C_{7}): x_{7})},G_{(I(C_{7}), x_{7})},G_{((I(C_{7}), x_{7}),y_8)}$  and $G_{((I(C_{7}), x_{7}):y_8)}.$
From Figures \ref{fig:11} and \ref{fig:21}, after suitable renumbering of variables we have the following isomorphisms:
	\begin{equation*}
	   	K[V(C_{7})]/(I(C_{7}): x_{7}) \cong K[V(C_{5})]/I(C_5) \tensor_K K[x_7,y_8],
	\end{equation*}
	\begin{equation*}
	   	K[V(C_{7})]/(I(C_{7}), x_{7}) \cong K[V(C_{6})]/(I(C_6),y_7y_8),
	\end{equation*}\begin{equation*}
	   	K[V(C_{7})]/((I(C_{7}), x_{7}),y_8) \cong K[V(C_{6})]/I(C_6),
	\end{equation*}  \begin{equation*}
	   	K[V(C_{7})]/((I(C_{7}), x_{7}):y_8) \cong K[V(B_{6})]/I(B_6) \tensor_K K[y_8].
	\end{equation*}

\begin{figure}[H]
	\centering
	\begin{subfigure}[b]{0.45\textwidth}
			\centering
	\[\begin{tikzpicture}[x=0.7cm, y=0.6cm]
 	\vertex[fill] (6) at (0,0) [label=below:${x_{1}}$] {};
 	\vertex[fill] (7) at (1,0) [label=below:${x_{2}}$] {};
 	\vertex[fill] (8) at (2,0) [label=below:${x_{3}}$] {};
 	\vertex[fill] (9) at (3,0) [label=below:${x_{4}}$] {};
 	\vertex[fill] (10) at (4,0) [label=below:${x_{5}}$] {};
  \vertex[fill] (11) at (5,0) [label=below:${x_{6}}$] {};
       \vertex[fill] (66a) at (-1,1) [label=above:${y_{9}}$] {};
 	\vertex[fill] (6a) at (0,1) [label=above:${y_{1}}$] {};
 	\vertex[fill] (7a) at (1,1) [label=above:${y_{2}}$] {};
 	\vertex[fill] (8a) at (2,1) [label=above:${y_{3}}$] {};
 	\vertex[fill] (9a) at (3,1) [label=above:${y_{4}}$] {};
 	\vertex[fill] (10a) at (4,1) [label=above:${y_{5}}$] {};
 	\vertex[fill] (11a) at (5,1) [label=above:${y_{6}}$] {};
  \vertex[fill] (12a) at (6,1) [label=above:${y_{7}}$] {};
 	\path 
(9) edge (10)
(9a) edge (10a)
 	(6) edge (7)
 	(7) edge (8)
 	(8) edge (9)
 	(6a) edge (7a)
 	(7a) edge (8a)
 	(8a) edge (9a)
 	(6) edge (6a)
 	(7) edge (7a)
 	(8) edge (8a)
 	(9) edge (9a)
 	(10) edge (10a)
 	(11a) edge (10a)
  (7a) edge (66a)
 	;
 	\end{tikzpicture}\]
	\end{subfigure}
	\hfill
	\begin{subfigure}[b]{0.45\textwidth}
		\centering
	\[\begin{tikzpicture}[x=0.7cm, y=0.6cm]
 	\vertex[fill] (6) at (0,0) [label=below:${x_{1}}$] {};
 	\vertex[fill] (7) at (1,0) [label=below:${x_{2}}$] {};
 	\vertex[fill] (8) at (2,0) [label=below:${x_{3}}$] {};
 	\vertex[fill] (9) at (3,0) [label=below:${x_{4}}$] {};
 	\vertex[fill] (10) at (4,0) [label=below:${x_{5}}$] {};
  \vertex[fill] (11) at (5,0) [label=below:${x_{6}}$] {};
   \vertex[fill] (12) at (6,0) [label=below:${x_{7}}$] {};
       \vertex[fill] (66a) at (-1,1) [label=above:${y_{9}}$] {};
 	\vertex[fill] (6a) at (0,1) [label=above:${y_{1}}$] {};
 	\vertex[fill] (7a) at (1,1) [label=above:${y_{2}}$] {};
 	\vertex[fill] (8a) at (2,1) [label=above:${y_{3}}$] {};
 	\vertex[fill] (9a) at (3,1) [label=above:${y_{4}}$] {};
 	\vertex[fill] (10a) at (4,1) [label=above:${y_{5}}$] {};
 	\vertex[fill] (11a) at (5,1) [label=above:${y_{6}}$] {};
  \vertex[fill] (12a) at (6,1) [label=above:${y_{7}}$] {};
  \vertex[fill] (13a) at (7,1) [label=above:${y_{8}}$] {};
  
 	\path 
(9) edge (10)
(9a) edge (10a)
 	(6) edge (7)
 	(7) edge (8)
 	(8) edge (9)
 	(6a) edge (7a)
 	(7a) edge (8a)
 	(8a) edge (9a)
 	(6) edge (6a)
  (10) edge (11)
 (12a) edge (13a)
  (11) edge (11a)
  (11a) edge (12a)
 	(7) edge (7a)
 	(8) edge (8a)
 	(9) edge (9a)
 	(10) edge (10a)
 	(11a) edge (10a)
  (7a) edge (66a)
 	;
 	\end{tikzpicture}\]
	\end{subfigure}
	\hfill
	\caption{From left to right $G_{(I(C_{7}): x_{7})}$ and $G_{(I(C_{7}), x_{7})}$.}
	\label{fig:11}
\end{figure}
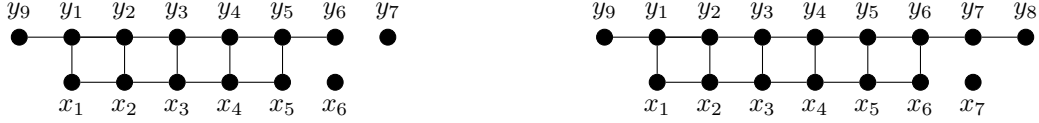		

\begin{figure}[H]
	\centering
	\begin{subfigure}[b]{0.45\textwidth}
		\centering
	\[\begin{tikzpicture}[x=0.7cm, y=0.6cm]
 	\vertex[fill] (6) at (0,0) [label=below:${x_{1}}$] {};
 	\vertex[fill] (7) at (1,0) [label=below:${x_{2}}$] {};
 	\vertex[fill] (8) at (2,0) [label=below:${x_{3}}$] {};
 	\vertex[fill] (9) at (3,0) [label=below:${x_{4}}$] {};
 	\vertex[fill] (10) at (4,0) [label=below:${x_{5}}$] {};
  \vertex[fill] (11) at (5,0) [label=below:${x_{6}}$] {};
   \vertex[fill] (12) at (6,0) [label=below:${x_{7}}$] {};
       \vertex[fill] (66a) at (-1,1) [label=above:${y_{9}}$] {};
 	\vertex[fill] (6a) at (0,1) [label=above:${y_{1}}$] {};
 	\vertex[fill] (7a) at (1,1) [label=above:${y_{2}}$] {};
 	\vertex[fill] (8a) at (2,1) [label=above:${y_{3}}$] {};
 	\vertex[fill] (9a) at (3,1) [label=above:${y_{4}}$] {};
 	\vertex[fill] (10a) at (4,1) [label=above:${y_{5}}$] {};
 	\vertex[fill] (11a) at (5,1) [label=above:${y_{6}}$] {};
  \vertex[fill] (12a) at (6,1) [label=above:${y_{7}}$] {};
  \vertex[fill] (13a) at (7,1) [label=above:${y_{8}}$] {};
  
 	\path 
(9) edge (10)
(9a) edge (10a)
 	(6) edge (7)
 	(7) edge (8)
 	(8) edge (9)
 	(6a) edge (7a)
 	(7a) edge (8a)
 	(8a) edge (9a)
 	(6) edge (6a)
  (10) edge (11)
  (11) edge (11a)
  (11a) edge (12a)
 	(7) edge (7a)
 	(8) edge (8a)
 	(9) edge (9a)
 	(10) edge (10a)
 	(11a) edge (10a)
  (7a) edge (66a)
 	;
 	\end{tikzpicture}\]
	\end{subfigure}
	\hfill
	\begin{subfigure}[b]{0.45\textwidth}
			\centering
	\[\begin{tikzpicture}[x=0.7cm, y=0.6cm]
 	\vertex[fill] (6) at (0,0) [label=below:${x_{1}}$] {};
 	\vertex[fill] (7) at (1,0) [label=below:${x_{2}}$] {};
 	\vertex[fill] (8) at (2,0) [label=below:${x_{3}}$] {};
 	\vertex[fill] (9) at (3,0) [label=below:${x_{4}}$] {};
 	\vertex[fill] (10) at (4,0) [label=below:${x_{5}}$] {};
  \vertex[fill] (11) at (5,0) [label=below:${x_{6}}$] {};
   \vertex[fill] (12) at (6,0) [label=below:${x_{7}}$] {};
       \vertex[fill] (66a) at (-1,1) [label=above:${y_{9}}$] {};
 	\vertex[fill] (6a) at (0,1) [label=above:${y_{1}}$] {};
 	\vertex[fill] (7a) at (1,1) [label=above:${y_{2}}$] {};
 	\vertex[fill] (8a) at (2,1) [label=above:${y_{3}}$] {};
 	\vertex[fill] (9a) at (3,1) [label=above:${y_{4}}$] {};
 	\vertex[fill] (10a) at (4,1) [label=above:${y_{5}}$] {};
 	\vertex[fill] (11a) at (5,1) [label=above:${y_{6}}$] {};
  \vertex[fill] (12a) at (6,1) [label=above:${y_{7}}$] {};
  
 	\path 
(9) edge (10)
(9a) edge (10a)
 	(6) edge (7)
 	(7) edge (8)
 	(8) edge (9)
 	(6a) edge (7a)
 	(7a) edge (8a)
 	(8a) edge (9a)
 	(6) edge (6a)
  (10) edge (11)
  (11) edge (11a)
 	(7) edge (7a)
 	(8) edge (8a)
 	(9) edge (9a)
 	(10) edge (10a)
 	(11a) edge (10a)
  (7a) edge (66a)
 	;
 	\end{tikzpicture}\]
	\end{subfigure}
	\hfill
	\caption{From left to right $G_{((I(C_{7}), x_{7}),y_8)} $ and $ G_{((I(C_{7}), x_{7}):y_8)}$.}
	\label{fig:21}
\end{figure} } 
\end{Remark}

\begin{Lemma}\label{The1}
Let $n\geq 2$ and  $S:=K[V(B_n)].$ Then
$\depth(S/I(B_n))=\sdepth(S/I(B_n))=\lceil\frac{n+1}{2}\rceil.$ 
\end{Lemma}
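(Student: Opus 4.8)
The plan is to induct on $n$, peeling off the column carrying the pendant vertex $y_{n+1}$ by analysing the vertex $y_n$ to which it is attached. The base cases are recorded in Remark \ref{rem1}: both $B_0$ and $B_1$ have depth and Stanley depth equal to $1=\lceil\frac{m+1}{2}\rceil$ for $m\in\{0,1\}$. For the inductive step fix $n\geq 2$, write $I:=I(B_n)$ and $S:=K[V(B_n)]$, and study the two monomial ideals $(I:y_n)$ and $(I,y_n)$, which sit in the short exact sequence
$$0\to S/(I:y_n)\xrightarrow{\ \cdot y_n\ }S/I\to S/(I,y_n)\to 0.$$
The idea is to identify both end terms with (polynomial extensions of) the quotients attached to $B_{n-2}$ and $B_{n-1}$, and then to transfer the exact value down the induction by the "$\depth=\depth$ of the colon'' lemmas.

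The combinatorial heart of the proof is these two identifications. Since $N_{B_n}(y_n)=\{y_{n-1},x_n,y_{n+1}\}$, forming $(I:y_n)$ sends exactly these three variables into the ideal while leaving $y_n$ free; the surviving graph on $\{x_1,\dots,x_{n-1},y_1,\dots,y_{n-2}\}$ is the ladder $A_{n-2}$ with a single pendant vertex $x_{n-1}$ attached to $x_{n-2}$. Because $A_{n-2}$ is invariant under interchanging the $x$- and $y$-rows, attaching a pendant at the end of the $x$-row yields a graph isomorphic to $B_{n-2}$, so by Lemma \ref{virtens}
$$S/(I:y_n)\cong K[V(B_{n-2})]/I(B_{n-2})\otimes_K K[y_n].$$
(Equivalently, this colon quotient can be read off from Lemma \ref{lemiso} applied to $x_i=y_n$.) In the same way, killing $y_n$ deletes the three edges at $y_n$, leaving $x_n$ a pendant on $x_{n-1}$ and $y_{n+1}$ isolated, and the same symmetry gives
$$S/(I,y_n)\cong K[V(B_{n-1})]/I(B_{n-1})\otimes_K K[y_{n+1}].$$
The degenerate small cases of these isomorphisms are exactly the $B_0,B_1$ of Remark \ref{rem1}.

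With the isomorphisms established, Lemma \ref{111} together with the induction hypothesis yields
$$\depth(S/(I:y_n))=\lceil\tfrac{n-1}{2}\rceil+1,\qquad \depth(S/(I,y_n))=\lceil\tfrac{n}{2}\rceil+1,$$
and the identical equalities hold for $\sdepth$. Since $\lceil\frac{n}{2}\rceil\geq\lceil\frac{n-1}{2}\rceil$, the hypothesis $\depth(S/(I,y_n))\geq\depth(S/(I:y_n))$ of Lemma \ref{exacther} is met, giving $\depth(S/I)=\depth(S/(I:y_n))=\lceil\frac{n-1}{2}\rceil+1$; the very same numerical comparison feeds Lemma \ref{exacthersdepth} to give $\sdepth(S/I)=\sdepth(S/(I:y_n))=\lceil\frac{n-1}{2}\rceil+1$. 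The elementary identity $\lceil\frac{n-1}{2}\rceil+1=\lceil\frac{n+1}{2}\rceil$ then closes the induction and proves both equalities simultaneously.

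I expect the main obstacle to be the bookkeeping in the two combinatorial isomorphisms, specifically making the $x\leftrightarrow y$ relabeling precise so that the truncated ladders are genuinely $B_{n-2}$ and $B_{n-1}$, and confirming that $y_n$ (respectively $y_{n+1}$) really detaches as a free polynomial variable rather than lingering in the ideal. Once these are pinned down, the depth and Stanley depth arguments run in perfect parallel through Lemmas \ref{exacther} and \ref{exacthersdepth}, so the Stanley depth requires no separate work beyond the single inequality $\lceil\frac{n}{2}\rceil\geq\lceil\frac{n-1}{2}\rceil$.
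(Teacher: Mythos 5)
Your proposal is correct and takes essentially the same route as the paper: the identical short exact sequence at $y_n$, the identical isomorphisms \eqref{ff1} and \eqref{ff2} identifying the colon and sum quotients with $B_{n-2}$ and $B_{n-1}$ (up to a free variable), and the identical appeals to Lemma \ref{exacther}, Lemma \ref{exacthersdepth} and Lemma \ref{111}, with the same numerical comparison $\lceil\frac{n}{2}\rceil+1\geq\lceil\frac{n+1}{2}\rceil$ closing the induction. Your only deviations are cosmetic: you justify the isomorphisms via the $x\leftrightarrow y$ row symmetry explicitly, and you start the induction from the $B_0,B_1$ cases of Remark \ref{rem1} rather than treating $n=2,3$ separately as the paper does.
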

\begin{proof}
 First we prove the result for depth. Consider the following short exact sequence
\begin{equation*}\label{es1}
0\longrightarrow S/(I(B_n):y_{n})\xrightarrow{\cdot y_{n}} S/I(B_n)\longrightarrow S/(I(B_n),y_{n})\longrightarrow 0.
\end{equation*}
After a suitable numbering of variables we have the following isomorphisms 
\begin{equation}\label{ff1}
    S/(I(B_n):y_{n})\cong K[V(B_{n-2})]/(I(B_{n-2}))\tensor_K K[y_n],
\end{equation}
\begin{equation}\label{ff2}
    S/(I(B_n),y_{n})\cong  K[V(B_{n-1})]/(I(B_{n-1}))\tensor_K K[y_{n+1}].
\end{equation}
If $n=2,$ then by Lemma \ref{111} and Remark \ref{rem1}, $\depth(S/(I(B_2):y_{2}))=\depth(K[V(B_{0})]/I(B_{0}))+1=\depth(K[x_1])+1=2$ and $$\depth(S/(I(B_2),y_{2}))=\depth(K[V(B_{1})]/I(B_{1}))+1=\depth(K[V(\mathbb{P}_3)]/I(\mathbb{P}_3))+1=2.$$ By Lemma \ref{exacther}, $\depth(K[V(B_2)]/I(B_2))=2.$ Similarly, the required result follows when $n=3.$  Let $n\geq 4.$ By induction on $n$ and applying Lemma \ref{111} on Equations (\ref{ff1}) and  (\ref{ff2}), it follows that $$\depth(S/(I(B_n):y_{n}))=\depth(K[V(B_{n-2})]/I(B_{n-2}))+1= \lceil\frac{n-2+1}{2}\rceil+1=\lceil\frac{n+1}{2}\rceil,$$
 $$\depth(S/(I(B_n),y_{n}))=\depth(K[V(B_{n-1})]/I(B_{n-1}))+1 =\lceil\frac{n-1+1}{2}\rceil+1=\lceil\frac{n}{2}\rceil +1.$$ 
Clearly,  $\depth(S/(I(B_n),y_{n}))\geq \depth(S/(I(B_n):y_{n}))$, therefore by Lemma \ref{exacther}, we get the required result. 	For Stanley depth, the proof is similar to depth by using Lemma \ref{exacthersdepth} in place of Lemma \ref{exacther}.
\end{proof}
\begin{Corollary}
    Let $n\geq 2$ and  $S:=K[V(B_n)].$ Then
$\pdim(S/I(B_n))=2n-\lceil\frac{n+1}{2}\rceil+1.$ 
\end{Corollary}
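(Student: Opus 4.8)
The plan is to apply the Auslander--Buchsbaum formula (Lemma \ref{auss13}) to the cyclic module $S/I(B_n)$, where $S=K[V(B_n)]$ is a polynomial ring in $|V(B_n)|$ variables. The key observation is simply to count the number of variables: since $V(B_n)=V(A_n)\cup\{y_{n+1}\}$ and $|V(A_n)|=2n$, the ring $S$ has exactly $2n+1$ variables, so $\depth(S)=2n+1$.

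First I would invoke Lemma \ref{auss13}, which gives
\begin{equation*}
\pdim(S/I(B_n))=\depth(S)-\depth(S/I(B_n)).
\end{equation*}
The hypothesis of Auslander--Buchsbaum is satisfied because $S$ is a (localization of a) Noetherian ring and $S/I(B_n)$ is a finitely generated module of finite projective dimension (the Hilbert syzygy theorem guarantees finite projective dimension over the polynomial ring). Next I would substitute the two quantities just determined: $\depth(S)=2n+1$ from the variable count, and $\depth(S/I(B_n))=\lceil\frac{n+1}{2}\rceil$ from the immediately preceding Lemma \ref{The1}.

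Carrying out the substitution yields
\begin{equation*}
\pdim(S/I(B_n))=(2n+1)-\Big\lceil\tfrac{n+1}{2}\Big\rceil=2n-\Big\lceil\tfrac{n+1}{2}\Big\rceil+1,
\end{equation*}
which is exactly the claimed formula. There is essentially no obstacle here; the only point requiring any care is the correct count of the number of variables of $S$, namely that $B_n$ has $2n+1$ vertices (the $2n$ vertices of the ladder $A_n$ together with the single extra vertex $y_{n+1}$), and correspondingly $\depth(S)=\sdepth(S)=2n+1$ as noted in the remark following Lemma \ref{LEMMA1.5}. Once this count is fixed, the corollary is an immediate consequence of combining Lemma \ref{The1} with Lemma \ref{auss13}.
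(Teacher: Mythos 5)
Your proposal is correct and follows exactly the paper's proof: the authors likewise deduce the corollary by combining the Auslander--Buchsbaum formula (Lemma \ref{auss13}) with the depth value $\depth(S/I(B_n))=\lceil\frac{n+1}{2}\rceil$ from Lemma \ref{The1}. Your explicit verification that $|V(B_n)|=2n+1$, so that $\depth(S)=2n+1$, is the only detail the paper leaves implicit, and your arithmetic matches the stated formula.
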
 \begin{proof}
    The proof follows by Lemma \ref{auss13}  and Lemma \ref{The1}.
\end{proof}
\noindent The projective dimension of $K[V(A_n)]/I(A_n)$ has been computed in \cite{ladder}.
\begin{Theorem}[{\cite[Theorem 5.5]{ladder}}]\label{ladd}
     If $n\geq 2$, then $\pdim(K[V(A_n)]/I(A_n))=\lfloor\frac{3n}{2}\rfloor.$
\end{Theorem}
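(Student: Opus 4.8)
The plan is to deduce the projective dimension from the depth via the Auslander--Buchsbaum formula and then to compute that depth by an inductive short exact sequence argument. Write $S:=K[V(A_n)]$; since $A_n$ has $2n$ vertices we have $\depth(S)=2n$, so by Lemma \ref{auss13} one has $\pdim(S/I(A_n))=2n-\depth(S/I(A_n))$. As $2n-\lfloor\frac{3n}{2}\rfloor=\lceil\frac{n}{2}\rceil$, the asserted formula $\pdim(S/I(A_n))=\lfloor\frac{3n}{2}\rfloor$ is equivalent to the statement $\depth(S/I(A_n))=\lceil\frac{n}{2}\rceil$, which I would prove by induction on $n$, taking $A_2=\mathbb{C}_4$ (so that $\depth(S/I(A_2))=1$ by Lemma \ref{cyccc}) together with $A_3$ as base cases.

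For the inductive step I would break the module along the vertex $y_n$, whose neighbours in $A_n$ are $x_n$ and $y_{n-1}$, using the short exact sequence
\[
0\longrightarrow S/(I(A_n):y_{n})\xrightarrow{\;\cdot y_n\;} S/I(A_n)\longrightarrow S/(I(A_n),y_{n})\longrightarrow 0 .
\]
Passing to $(I(A_n):y_n)$ kills $x_n$ and $y_{n-1}$, which leaves the ladder $A_{n-2}$ together with a single pendant edge at a corner and an isolated vertex $y_n$; passing to $(I(A_n),y_n)$ deletes $y_n$ and leaves the ladder $A_{n-1}$ with one pendant edge at a corner. Up to the $x\leftrightarrow y$ symmetry of the ladder these are exactly the graphs $B_{n-2}$ and $B_{n-1}$, so after a suitable renumbering one obtains $S/(I(A_n):y_n)\cong K[V(B_{n-2})]/I(B_{n-2})\tensor_K K[y_n]$ and $S/(I(A_n),y_n)\cong K[V(B_{n-1})]/I(B_{n-1})$. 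Lemma \ref{The1} (together with Lemma \ref{111} for the extra variable $y_n$) then gives $\depth(S/(I(A_n):y_n))=\lceil\frac{n-1}{2}\rceil+1$ and $\depth(S/(I(A_n),y_n))=\lceil\frac{n}{2}\rceil$.

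With these two values in hand, part (1) of the Depth Lemma (Lemma \ref{le01}) yields $\depth(S/I(A_n))\geq\lceil\frac{n}{2}\rceil$ for every $n$. When $n$ is odd both outer terms of the sequence have depth exactly $\lceil\frac{n}{2}\rceil=\frac{n+1}{2}$, and then parts (1) and (2) of the Depth Lemma squeeze the middle term: part (2) forbids $\depth(S/I(A_n))>\frac{n+1}{2}$, so equality holds and the induction closes in the odd case.

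The hard part is the upper bound when $n$ is even. Here the colon term has depth $\frac{n}{2}+1$, strictly larger than the depth $\frac{n}{2}$ of the quotient term, so Lemma \ref{exacther} does not apply, and because $\depth(S/(I(A_n):y_n))=\depth(S/(I(A_n),y_n))+1$ exactly, the Depth Lemma delivers only the inequality $\depth(S/I(A_n))\geq\frac{n}{2}$; one sees this obstruction already for $A_2=\mathbb{C}_4$, where $\depth=1$ while the colon term has depth $2$. To force the reverse inequality I would either appeal to the explicit minimal free resolution used in \cite{ladder}, or try to produce a monomial $u\notin I(A_n)$ whose colon $(I(A_n):u)$ collapses the ladder to a disjoint union of edges of depth exactly $\frac{n}{2}$ (so that Lemma \ref{Cor7} gives $\depth(S/I(A_n))\leq\frac{n}{2}$), or analyse the connecting homomorphism in the local cohomology sequence attached to the splitting above. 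Pinning down this even case is the main obstacle.
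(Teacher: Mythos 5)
Your skeleton is sound as far as it goes: the reduction via Lemma \ref{auss13} to $\depth(S/I(A_n))=\lceil\frac{n}{2}\rceil$, the two isomorphisms $S/(I(A_n):y_n)\cong K[V(B_{n-2})]/I(B_{n-2})\otimes_K K[y_n]$ (this is exactly Equation (\ref{sss1}) of the paper) and $S/(I(A_n),y_n)\cong K[V(B_{n-1})]/I(B_{n-1})$, the depth values $\lceil\frac{n-1}{2}\rceil+1$ and $\lceil\frac{n}{2}\rceil$ from Lemmas \ref{The1} and \ref{111}, and the odd-$n$ squeeze via Lemma \ref{le01}(2) are all correct. But the even case that you flag as ``the main obstacle'' is a genuine gap, not a technicality: with your choice of short exact sequence the two outer depths are $\frac{n}{2}+1$ and $\frac{n}{2}$, the Depth Lemma pins the middle term only to $\{\frac{n}{2},\frac{n}{2}+1\}$, and none of your three escape routes is carried out. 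Worse, route (i) is circular in this context — the statement \emph{is} \cite[Theorem 5.5]{ladder}, and indeed the paper itself offers no proof of Theorem \ref{ladd} beyond this citation, deriving the depth from it by Auslander--Buchsbaum — while route (ii) is not guaranteed to exist: already for $A_2=\mathbb{C}_4$ every monomial $u\notin I$ with $u\neq 1$ has $S/(I:u)$ a polynomial ring in two variables, of depth $2>1$, so Lemma \ref{Cor7} can never certify the upper bound there, and small even ladders show the same inflation of colon depths.

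What closes the gap — and this is the paper's independent argument in Theorem \ref{The2}, which together with Lemma \ref{auss13} reproves Theorem \ref{ladd} — is to use the \emph{other} short exact sequence
\begin{equation*}
0\longrightarrow (I(A_n):y_n)/I(A_n)\longrightarrow S/I(A_n)\longrightarrow S/(I(A_n):y_n)\longrightarrow 0,
\end{equation*}
whose kernel can be computed \emph{exactly} by Lemma \ref{lemiso}: splitting monomials of $(I(A_n):y_n)\setminus I(A_n)$ according to which neighbor of $y_n$ divides them gives
\begin{equation*}
(I(A_n):y_n)/I(A_n)\cong K[V(B_{n-3})]/I(B_{n-3})\otimes_K K[x_n,y_{n-1}]\;\oplus\; K[V(A_{n-2})]/I(A_{n-2})\otimes_K K[x_n],
\end{equation*}
so by Lemma \ref{The1} and induction on $n$ (this is where the induction on the ladder actually enters, through the $A_{n-2}$ summand, rather than through $B$-graphs alone) its depth is $\min\bigl\{\lceil\frac{n-2}{2}\rceil+2,\lceil\frac{n-2}{2}\rceil+1\bigr\}=\lceil\frac{n}{2}\rceil$. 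Now the kernel carries the \emph{smaller} of the two outer depths: since $\lceil\frac{n}{2}\rceil<\lceil\frac{n+1}{2}\rceil+1=\depth(S/(I(A_n):y_n))+1$, Lemma \ref{le01}(2) forces $\depth(S/I(A_n))\leq\lceil\frac{n}{2}\rceil$, and part (1) gives the matching lower bound — uniformly in the parity of $n$, with no case split. In effect, knowing the kernel module exactly is the substitute for the connecting-homomorphism analysis you gesture at in route (iii); without Lemma \ref{lemiso} (or some equivalent computation of $(I(A_n):y_n)/I(A_n)$), your even case remains unproved.
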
\noindent Consequently, one can compute its depth by using Auslander–Buchsbaum formula.  \begin{Corollary}
   If $n\geq 2,$ then $\depth(K[V(A_n)]/I(A_n))=2n-\lfloor\frac{3n}{2}\rfloor=\lceil\frac{n}{2}\rceil.$
\end{Corollary}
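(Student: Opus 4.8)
The plan is to deduce the depth directly from the projective dimension via the Auslander–Buchsbaum formula, since Theorem \ref{ladd} already supplies the value $\pdim(K[V(A_n)]/I(A_n))=\lfloor\frac{3n}{2}\rfloor$. First I would observe that $A_n$ has exactly $2n$ vertices, so $S:=K[V(A_n)]$ is a polynomial ring in $2n$ variables and hence $\depth(S)=2n$. Since $\mathcal{M}:=K[V(A_n)]/I(A_n)$ is a nonzero finitely generated graded $S$-module, it has finite projective dimension by Hilbert's syzygy theorem, so the hypotheses of Lemma \ref{auss13} are satisfied.

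Next, applying Lemma \ref{auss13} to $\mathcal{M}$ gives
\begin{equation*}
\pdim(\mathcal{M})+\depth(\mathcal{M})=\depth(S)=2n,
\end{equation*}
and substituting the value $\pdim(\mathcal{M})=\lfloor\frac{3n}{2}\rfloor$ from Theorem \ref{ladd} yields $\depth(\mathcal{M})=2n-\lfloor\frac{3n}{2}\rfloor$, which is the first claimed equality.

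It remains to verify the closed form $2n-\lfloor\frac{3n}{2}\rfloor=\lceil\frac{n}{2}\rceil$. I would check this by parity of $n$: if $n$ is even then both $\frac{3n}{2}$ and $\frac{n}{2}$ are integers, so $\lfloor\frac{3n}{2}\rfloor+\lceil\frac{n}{2}\rceil=\frac{3n}{2}+\frac{n}{2}=2n$; if $n$ is odd then $\lfloor\frac{3n}{2}\rfloor=\frac{3n-1}{2}$ and $\lceil\frac{n}{2}\rceil=\frac{n+1}{2}$, which again sum to $2n$. In either case $\lfloor\frac{3n}{2}\rfloor+\lceil\frac{n}{2}\rceil=2n$, so $2n-\lfloor\frac{3n}{2}\rfloor=\lceil\frac{n}{2}\rceil$, as required. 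Since the whole argument is a single substitution into the Auslander–Buchsbaum formula followed by an elementary parity computation, there is no genuine obstacle here; the only substantive input is the external result of Theorem \ref{ladd}, which is invoked rather than reproved.
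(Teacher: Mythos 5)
Your proposal is correct and follows exactly the paper's own route: the paper also derives this corollary by substituting the value $\pdim(K[V(A_n)]/I(A_n))=\lfloor\frac{3n}{2}\rfloor$ from Theorem \ref{ladd} into the Auslander--Buchsbaum formula (Lemma \ref{auss13}) over the polynomial ring in $2n$ variables. Your additional remarks --- invoking Hilbert's syzygy theorem for finiteness of projective dimension and checking the identity $2n-\lfloor\frac{3n}{2}\rfloor=\lceil\frac{n}{2}\rceil$ by parity --- simply spell out details the paper leaves implicit.
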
 \begin{proof}
    The required result follows by using  Lemma \ref{auss13}  and Theorem \ref{ladd}.
\end{proof}  \noindent Here we give an alternative proof for depth by using Lemma \ref{lemiso} we include this proof because proof for Stanley depth is analogous. In Remark \ref{rem2} we explain a situations that arises in special cases in upcoming proofs.

\begin{Remark}\label{rem2}
\em{ Let $i\in\mathbb{Z^+},$ if $k<i$ then we consider $\cup^{k}_{i}\{x_{i}y_{i},x_{i}x_{i+1},y_{i}y_{i+1}\}=\emptyset$. Also we take $x_ay_b=0$, whenever $a$ or $b$ is not positive.}  
\end{Remark}

\begin{Theorem}\label{The2}
    If $n\geq 2$ and $S=K[V(A_n)],$  then $\sdepth(S/I(A_{n}))\geq\depth(S/I(A_{n}))=\lceil\frac{n}{2}\rceil.$
\end{Theorem}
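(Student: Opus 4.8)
The plan is to extract both invariants from a single short exact sequence built from the colon submodule, rather than iterating the more familiar sequence $0\to S/(I:u)\to S/I\to S/(I,u)\to 0$. Write $I:=I(A_n)$ and choose the corner vertex $y_n$, whose neighbourhood is $N_{A_n}(y_n)=\{x_n,y_{n-1}\}$. I would work with the exact sequence of $S$-modules
\[
0\longrightarrow (I:y_n)/I \longrightarrow S/I \longrightarrow S/(I:y_n)\longrightarrow 0,
\]
in which the submodule $(I:y_n)/I$ is precisely the object described by Lemma \ref{lemiso}. The reason for choosing this sequence is that Lemma \ref{lemiso} gives the kernel term an explicit direct-sum form whose depth and Stanley depth can be computed \emph{exactly}, and this is what allows part (2) of Lemma \ref{le01} to bound the depth from above.

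First I would identify the two terms. Applying Lemma \ref{lemiso} with $x_i=y_n$ splits $(I:y_n)/I$ into one summand per neighbour of $y_n$. Deleting the neighbours of $x_n$ leaves the ladder on $x_1,\dots,x_{n-2},y_1,\dots,y_{n-2}$ with the extra vertex $y_{n-1}$ attached at $y_{n-2}$, i.e.\ a copy of $B_{n-2}$ together with the free variable $x_n$, so the first summand is $(K[V(B_{n-2})]/I(B_{n-2}))[x_n]$; deleting the neighbours of $y_{n-1}$ (together with $x_n$) leaves a copy of $B_{n-3}$, giving the second summand $(K[V(B_{n-3})]/I(B_{n-3}))[y_{n-1}]$. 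By Lemma \ref{The1} and Lemma \ref{111} these summands have depth $\lceil\frac{n-1}{2}\rceil+1$ and $\lceil\frac{n-2}{2}\rceil+1=\lceil\frac{n}{2}\rceil$, so the direct sum has depth equal to their minimum, namely $\lceil\frac{n}{2}\rceil$. For the quotient term, $(I:y_n)=I+(x_n,y_{n-1})$ yields $S/(I:y_n)\cong (K[V(B_{n-2})]/I(B_{n-2}))\otimes_K K[y_n]$, of depth $\lceil\frac{n-1}{2}\rceil+1$ by Lemma \ref{The1} and Lemma \ref{111}. The degenerate small cases $n=2,3$, where $B_{n-3}$ is empty, are absorbed by the conventions of Remark \ref{rem1} and Remark \ref{rem2}.

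With these two values I would close the argument. Part (1) of Lemma \ref{le01} gives $\depth(S/I)\ge \min\{\lceil\frac{n}{2}\rceil,\ \lceil\frac{n-1}{2}\rceil+1\}=\lceil\frac{n}{2}\rceil$. For the reverse inequality, part (2) gives $\lceil\frac{n}{2}\rceil=\depth((I:y_n)/I)\ge\min\{\depth(S/I),\ \lceil\frac{n-1}{2}\rceil+2\}$; since $\lceil\frac{n-1}{2}\rceil+2>\lceil\frac{n}{2}\rceil$, the minimum must be $\depth(S/I)$, forcing $\depth(S/I)\le\lceil\frac{n}{2}\rceil$, hence equality. The Stanley depth statement follows from the same sequence: Lemma \ref{le1} gives $\sdepth(S/I)\ge\min\{\sdepth((I:y_n)/I),\ \sdepth(S/(I:y_n))\}$, and the same two-summand description, now using the Stanley-depth halves of Lemma \ref{The1} and Lemma \ref{111} together with the fact that the Stanley depth of a direct sum is at least the minimum of the summands' Stanley depths, bounds both terms below by $\lceil\frac{n}{2}\rceil$. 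I expect the genuinely delicate point to be the depth upper bound when $n$ is even: there $\depth(S/(I:y_n))=\frac{n}{2}+1$ strictly exceeds the target, so the customary sequence through $S/(I,y_n)$ cannot detect the correct value, and it is exactly the precise evaluation of $\depth((I:y_n)/I)$ through Lemma \ref{lemiso}, combined with part (2) of Lemma \ref{le01}, that resolves the gap.
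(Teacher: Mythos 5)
Your proposal is correct, and it follows the same skeleton as the paper's proof: the same short exact sequence $0\to (I(A_n):y_n)/I(A_n)\to S/I(A_n)\to S/(I(A_n):y_n)\to 0$, Lemma \ref{lemiso} for the kernel term, Lemma \ref{The1} together with Lemma \ref{111} for the numerical inputs, both directions of the Depth Lemma \ref{le01} for the two inequalities, and Lemma \ref{le1} for the Stanley depth lower bound. The one genuine difference is the order in which you feed the neighbours of $y_n$ into Lemma \ref{lemiso}. Processing $x_n$ first, you obtain
$(I(A_n):y_n)/I(A_n)\cong K[V(B_{n-2})]/I(B_{n-2})\otimes_K K[x_n]\,\oplus\,K[V(B_{n-3})]/I(B_{n-3})\otimes_K K[y_{n-1}]$,
so every summand is a $B$-type graph and the exact value $\lceil\frac{n}{2}\rceil$ of the kernel's depth follows from Lemma \ref{The1} alone, with no induction. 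The paper processes $y_{n-1}$ first and gets
$K[V(B_{n-3})]/I(B_{n-3})\otimes_K K[x_n,y_{n-1}]\,\oplus\,K[V(A_{n-2})]/I(A_{n-2})\otimes_K K[x_n]$;
the $A_{n-2}$ summand forces an induction on $n$ and the separate treatment of $n=3$ and $n=4$, so your ordering buys a cleaner, induction-free argument at no cost. The only loose point is your claim that the small cases are ``absorbed by the conventions'': Remark \ref{rem1} covers $B_0$ and $B_1$ (so $n=3$ and $n=4$ are genuinely automatic in your setup), but $B_{-1}$, which your second summand produces at $n=2$, is defined nowhere in the paper; you should either check directly that this summand is $K[y_1]$, of depth $1$, so that the formula $\lceil\frac{n-2}{2}\rceil+1$ still reads correctly, or dispose of $n=2$ as the paper does, via $A_2\cong\mathbb{C}_4$ and Lemma \ref{cyccc}. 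Your closing remark is also accurate and worth retaining: for even $n$ one has $\depth(S/(I(A_n),y_n))=\frac{n}{2}<\frac{n}{2}+1=\depth(S/(I(A_n):y_n))$, so the hypothesis of Lemma \ref{exacther} fails and the customary colon-versus-sum comparison cannot close the upper bound; it is precisely the exact evaluation of $\depth\bigl((I(A_n):y_n)/I(A_n)\bigr)$ via Lemma \ref{lemiso}, fed into part (2) of Lemma \ref{le01}, that does, and this is implicit (though stated very tersely) in the paper's ``by using Depth Lemma on Equation (3.4)'' as well.
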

\begin{proof}
 We first prove the result for depth.
 If $n=2,$  then by Lemma \ref{cyccc},  $\depth(K[V(A_2)]/I(A_{2}))=1.$ Let $n\geq 3$. Consider the following short exact sequence
\begin{equation}\label{es2q}
0\longrightarrow (I(A_{n}):y_{n})/I(A_{n})\xrightarrow{\cdot y_{n}} S/I(A_{n})\longrightarrow S/(I(A_{n}):y_{n})\longrightarrow 0.
\end{equation} We have the following $S$-module isomorphism
\begin{equation}\label{sss1}
    S/(I(A_n):y_{n}) \cong K[V(B_{n-2})]/I(B_{n-2})\tensor_K K[y_{n}].
\end{equation} Here $N_{A_n}(y_n)=\{y_{n-1},x_n\},$ $S_1=K[V(A_n)\backslash N_{A_n}(y_{n-1})],$ $S_2=K[V(A_n)\backslash (N_{A_n}(x_{n})\cup \{y_{n-1}\})],$ $J_1=(S_1\cap I(A_n)),$ $J_2=(S_2\cap I(A_n)),$ then by using Lemma \ref{lemiso}, we have 
 \begin{equation}\label{GHJ}
     \begin{split}
        (I(A_n):y_{n})/I(A_n)&\cong S_1/J_1[y_{n-1}] \oplus S_2/J_2[x_{n}]\\&\cong  \frac{K[x_{1},\dots,x_{n-2},x_n,y_{1},\dots,y_{n-3}]}{\big(\cup^{n-4}_{i=1}\{x_{i}y_{i},x_{i}x_{i+1},y_{i}y_{i+1}\}\cup\{x_{n-3}y_{n-3},x_{n-3}x_{n-2} \}\big)}[y_{n-1}]
\\ & \quad\oplus \frac{K[x_{1},\dots,x_{n-2},y_{1},\dots,y_{n-2}]}{\big(\cup^{n-3}_{i=1}\{x_{i}y_{i},x_{i}x_{i+1},y_{i}y_{i+1}\}\cup\{x_{n-2}y_{n-2}\}\big)}[x_{n}]\\\\&\cong K[V(B_{n-3})]/I(B_{n-3})\tensor_K K[x_n,y_{n-1}]\oplus K[V(A_{n-2})]/I(A_{n-2}) \tensor_K K[x_n].
     \end{split}
 \end{equation} If $n=3,$ then we have \begin{equation}\label{gg1}
     K[V(A_3)]/(I(A_3):y_{3})\cong \frac{K[x_1,x_2,y_1]}{(y_1x_1,x_1x_2)}[y_3] \cong K[V(B_1)]/I(B_1) \tensor_K K[y_{3}],
 \end{equation}
 \begin{equation}\label{gg2} \begin{split}
     (I(A_3):y_{3})/I(A_3) &\cong \frac{K[x_1,x_3]}{(0)} [y_2] \oplus \frac{K[x_1,y_1]}{(x_1y_1)}[x_3] \\&\cong K[x_1,x_3,y_2]\oplus K[V(A_1)]/I(A_1)\tensor_K K[x_3].
 \end{split}\end{equation}  By applying Lemma \ref{111}  on Equations (\ref{gg1}) and (\ref{gg2}) and using Remark \ref{rem1} we have $$\depth(K[V(A_3)]/(I(A_3):y_{3}))=\depth(K[V(B_1)]/I(B_1))+1=\depth(K[V(\mathbb{P}_3)]/I(\mathbb{P}_3))+1=2$$  and 
 \begin{equation*}
     \begin{split}
         \depth((I(A_3):y_{3})/I(A_3))& =\min\{\depth( K[x_1,x_3,y_2]),\depth( K[V(A_1)]/I(A_1))+1)\}\\&=\min\{3,\depth(K[V(\mathbb{P}_2)]/I(\mathbb{P}_2))+1)\}\\&=\min\{3,2\}= 2.
     \end{split}
 \end{equation*}
  Thus by applying Depth Lemma on Equation (\ref{es2q}), we get $\depth(K[V(A_3)]/I(A_3))=2.$ If $n=4,$ as stated in Remark \ref{rem2}, we have $\cup^{n-4}_{i=1}\{x_{i}y_{i},x_{i}x_{i+1},y_{i}y_{i+1}\}=\emptyset$ and using the similar strategy and induction on $n,$ one can get the required result. Let $n\geq 5.$   
 By applying Lemma \ref{111} and Lemma \ref{The1} on Equation (\ref{sss1}), we get $$\depth(S/(I(A_n):y_{n}))=\depth(K[V(B_{n-2})]/I(B_{n-2}))+1= \lceil\frac{n-2+1}{2}\rceil+1=\lceil\frac{n+1}{2}\rceil.$$ By  using Equation (\ref{GHJ}) and applying Lemma \ref{111}, Lemma \ref{The1} and induction on $n$, we have
\begin{equation*}
    \begin{split} \depth((I(A_{n}):y_{n})/I(A_{n}))&=\min\Big\{\depth(K[V(B_{n-3})]/I(B_{n-3}))+\depth(K[x_n,y_{n-1}]), \\& \quad \quad\quad\quad\depth(K[V(A_{n-2})]/I(A_{n-2}))+\depth(K[x_n])\Big\}\\& =\min\Big\{\lceil\frac{n-3+1}{2}\rceil+2,\lceil\frac{n-2}{2}\rceil+1\Big\}= \lceil\frac{n}{2}\rceil.
    \end{split}
\end{equation*} We get the required result by using Depth Lemma on  Equation (\ref{es2q}). Now we prove the result for Stanley depth.  If $n=2,$  then by Lemma \ref{cyccc},  we have $\sdepth(K[V(A_2)]/I(A_{2}))\geq 1.$ If $n=3,$ then  by using Lemma \ref{111} and  Remark \ref{rem1} on Equations (\ref{gg1}) and  (\ref{gg2}) $$\sdepth(K[V(A_3)]/(I(A_3):y_{3}))=\sdepth(K[V(B_1)]/I(B_1))+1=\sdepth(K[V(\mathbb{P}_3)]/I(\mathbb{P}_3))+1=2$$  and 
 \begin{equation*}
     \begin{split}
         \sdepth((I(A_3):y_{3})/I(A_3))& \geq \min\{\sdepth( K[x_1,x_2,y_2]),\sdepth( K[V(A_1)]/I(A_1))+1)\}\\&\geq\min\{3,\sdepth(K[V(\mathbb{P}_2)]/I(\mathbb{P}_2))+1)\}\\&\geq\min\{3,1+1\}= 2.
     \end{split}
 \end{equation*}
  Thus by applying Lemma \ref{le1} on Equation (\ref{es2q}),  we get $\sdepth(K[V(A_3)]/I(A_3))\geq 2.$
For $n\geq 4$, we get the required lower bound for Stanley depth by using the similar arguments just by using Lemma \ref{le1} in place of Depth Lemma on the exact sequence (\ref{es2q}).
\end{proof}
	
\begin{Corollary}\label{Cor1}
    Let $n\geq 2$ and $S=K[V(A_n)].$ If \,$n\equiv 0\,(\mod 2),$ then $\sdepth(S/I(A_n))\in \{\lceil\frac{n}{2}\rceil, \lceil\frac{n+1}{2}\rceil \},$   otherwise we have $\sdepth(S/I(A_n))=\lceil\frac{n}{2}\rceil.$\end{Corollary}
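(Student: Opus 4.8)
The plan is to combine the lower bound already in hand with a matching upper bound extracted from a colon ideal. Theorem \ref{The2} supplies $\sdepth(S/I(A_n)) \geq \lceil \frac{n}{2}\rceil$, so the only work left is to produce the upper bound $\sdepth(S/I(A_n)) \leq \lceil \frac{n+1}{2}\rceil$; both assertions of the corollary will then fall out by comparing $\lceil \frac{n}{2}\rceil$ with $\lceil \frac{n+1}{2}\rceil$ according to the parity of $n$.

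For the upper bound I would apply Lemma \ref{Pro7} with the monomial $u = y_n$ (noting $y_n \notin I(A_n)$), which gives $\sdepth(S/I(A_n)) \leq \sdepth\bigl(S/(I(A_n):y_n)\bigr)$. The right-hand side is completely determined by the isomorphism (\ref{sss1}), namely $S/(I(A_n):y_n) \cong K[V(B_{n-2})]/I(B_{n-2}) \otimes_K K[y_n]$, which was already established inside the proof of Theorem \ref{The2}. Invoking Lemma \ref{111} together with Lemma \ref{The1} then yields $\sdepth\bigl(S/(I(A_n):y_n)\bigr) = \sdepth\bigl(K[V(B_{n-2})]/I(B_{n-2})\bigr) + 1 = \lceil \frac{(n-2)+1}{2}\rceil + 1 = \lceil \frac{n-1}{2}\rceil + 1 = \lceil \frac{n+1}{2}\rceil$, the last equality holding because adding an integer commutes with the ceiling. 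Hence $\sdepth(S/I(A_n)) \leq \lceil \frac{n+1}{2}\rceil$.

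To conclude I would split on the parity of $n$. When $n$ is odd, both $\lceil \frac{n}{2}\rceil$ and $\lceil \frac{n+1}{2}\rceil$ equal $\frac{n+1}{2}$, so the lower and upper bounds coincide and force $\sdepth(S/I(A_n)) = \lceil \frac{n}{2}\rceil$. When $n$ is even, $\lceil \frac{n}{2}\rceil = \frac{n}{2}$ while $\lceil \frac{n+1}{2}\rceil = \frac{n}{2}+1$, so the two bounds confine the value to $\{\lceil \frac{n}{2}\rceil, \lceil \frac{n+1}{2}\rceil\}$, exactly as claimed. I do not expect a genuine obstacle: the whole argument is bookkeeping once one notices that Lemma \ref{Pro7} turns the colon-ideal computation already carried out for Theorem \ref{The2} into an upper bound. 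The only points requiring care are the degenerate base cases $n \in \{2,3\}$, where $B_{n-2}$ collapses and one must fall back on the conventions of Remark \ref{rem1} (using $\sdepth(K[V(B_0)]/I(B_0)) = \sdepth(K[V(B_1)]/I(B_1)) = 1$), and the elementary identity $\lceil \frac{n-1}{2}\rceil + 1 = \lceil \frac{n+1}{2}\rceil$ that makes the two parity cases line up with the stated formulas.
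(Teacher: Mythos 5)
Your proposal is correct and follows essentially the same route as the paper's own proof: the lower bound from Theorem \ref{The2}, the upper bound $\sdepth(S/I(A_n))\leq \sdepth(S/(I(A_n):y_n))=\lceil\frac{n+1}{2}\rceil$ via Lemma \ref{Pro7}, Equation (\ref{sss1}), Lemma \ref{111} and Lemma \ref{The1}, and then a parity comparison. The only cosmetic difference is at $n=2$, where the paper invokes Lemma \ref{cyccc} (since $A_2\cong \mathbb{C}_4$) while you fall back on the $B_0$ convention of Remark \ref{rem1}; both yield the same conclusion.
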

\begin{proof} If $n=2,$ then one can easily see that the result follows by Theorem \ref{The2} and  Lemma \ref{cyccc}. If $n\geq 3$, then by Theorem \ref{The2}, we only need to show that $\sdepth(S/I(A_n))\leq \lceil\frac{n+1}{2}\rceil$. For $y_n \notin I(A_n),$ and by using Lemma \ref{Pro7}, we have
$\sdepth(S/I(A_n))\leq \sdepth(S/(I(A_n):y_{n})).$  By applying   Lemma \ref{111} and Lemma \ref{The1} on Equation (\ref{sss1}), we get $\sdepth(S/(I(A_n):y_{n})= \lceil\frac{n-2+1}{2}\rceil+1=\lceil\frac{n+1}{2}\rceil$  and the required result follows.
\end{proof} 	

		\begin{Lemma}\label{diamond}
		    Let $n\geq 2$ and $S=K[V(C_n)]$. Then 
			
			\begin{equation*}
				\depth(S/I(C_n))=\sdepth(S/I(C_n))=\left\{\begin{matrix}
				\lceil\frac{n}{2} \rceil+1, & \,\,\,\,\,\text{if}\,\,\, \,n\equiv 0,3\, (\mod\, 4);\\ \\
					\lceil\frac{n+1}{2} \rceil, & \text{if}\,\,\, \, n\equiv 1 \, (\mod\, 4);
					\\	\\\lceil\frac{n+1}{2} \rceil+1, & \text{if}\,\,\,\,\, \, n\equiv 2\, (\mod\, 4).
				\end{matrix}\right.
			\end{equation*} 
		\end{Lemma}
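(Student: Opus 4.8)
The plan is to induct on $n$, running two nested short exact sequences: one obtained by splitting on the corner variable $x_n$, and a second splitting on the pendant variable $y_{n+1}$. This mirrors the strategy of Lemma \ref{The1} and is tailored to the four isomorphisms recorded in the Remark preceding this statement. The base cases $n=2,3$ (and, to start the recursion cleanly, $n=4,5$) are treated directly: for small $n$ the quotient rings collapse to cycles, stars, or paths, whose invariants are supplied by Remark \ref{rem1}, Lemma \ref{cyccc} and Lemma \ref{paath}.

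For the inductive step I would begin with
$$0\longrightarrow S/(I(C_n):x_n)\xrightarrow{\ \cdot x_n\ } S/I(C_n)\longrightarrow S/(I(C_n),x_n)\longrightarrow 0.$$
Using $S/(I(C_n):x_n)\cong K[V(C_{n-2})]/I(C_{n-2})\otimes_K K[x_n,y_{n+1}]$ together with Lemma \ref{111} and Lemma \ref{LEMMA1.5} gives $\depth(S/(I(C_n):x_n))=\depth(K[V(C_{n-2})]/I(C_{n-2}))+2$, known by induction. The other term satisfies $S/(I(C_n),x_n)\cong K[V(C_{n-1})]/(I(C_{n-1}),y_ny_{n+1})$, which I would resolve by a second sequence splitting on $y_{n+1}$: its colon reduces to $B_{n-1}$, so by Lemma \ref{The1} its depth is $\depth(K[V(B_{n-1})]/I(B_{n-1}))+1=\lceil\frac{n}{2}\rceil+1$, while adjoining $y_{n+1}$ to the ideal returns $K[V(C_{n-1})]/I(C_{n-1})$, whose depth is the inductive value. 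Feeding these into the Depth Lemma (Lemma \ref{le01}) and Lemma \ref{exacther} produces $\depth(S/(I(C_n),x_n))$ in terms of the inductive data for $C_{n-1}$ and $B_{n-1}$.

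To pin $\depth(S/I(C_n))$ down exactly — rather than rely on the first sequence alone, where the two outer terms may have depths differing by one and leave the Depth Lemma inconclusive — I would compute the submodule $(I(C_n):x_n)/I(C_n)$ explicitly via Lemma \ref{lemiso}. Since $N_{C_n}(x_n)=\{x_{n-1},y_n\}$, that lemma decomposes it as a direct sum of two Stanley spaces; after the deletions $C_n\setminus N[x_{n-1}]$ and $C_n\setminus(N[y_n]\cup\{x_{n-1}\})$ — where the closed-neighbourhood and vanishing conventions of Remark \ref{rem2} must be applied carefully because of the two pendant vertices — each summand becomes, up to free variables and an isolated edge, the quotient ring of a smaller member of the $B$/$C$ family whose depth is already known. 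Its depth is then the minimum of the two resulting values. Substituting this exact depth and $\depth(S/(I(C_n):x_n))$ into $0\to (I(C_n):x_n)/I(C_n)\to S/I(C_n)\to S/(I(C_n):x_n)\to 0$ and invoking both inequalities of the Depth Lemma determines $\depth(S/I(C_n))$, while Lemma \ref{Cor7} furnishes the matching upper bound whenever needed. The argument then splits into the four residue classes of $n\bmod 4$: in each class one inserts the inductive closed form and the ceiling values from Lemma \ref{The1}, simplifies the minima, and checks that the stated formula is reproduced.

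The hard part will be exactly this residue-by-residue bookkeeping, together with the correct identification of the graphs generated by the colon and deletion operations (the delicate relabelling forced by the two pendants is where errors are easiest). In the residues where the depths entering a sequence coincide or differ by one, the Depth Lemma alone is insufficient and must be combined with the explicit submodule computation of Lemma \ref{lemiso} and the upper bound of Lemma \ref{Cor7}. Once the depth statement is secured, the Stanley-depth equality follows by rerunning the identical induction with Lemma \ref{le1} in place of the Depth Lemma and Lemma \ref{exacthersdepth} in place of Lemma \ref{exacther}: the decomposition of Lemma \ref{lemiso} yields an honest Stanley decomposition giving the matching lower bound $\sdepth\ge\depth$, and Lemma \ref{Pro7} supplies the upper bound that forces $\sdepth(S/I(C_n))=\depth(S/I(C_n))$.
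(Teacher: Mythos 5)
Your scaffolding (base cases; the nested short exact sequences on $x_n$ and $y_{n+1}$ with the three isomorphisms to $C_{n-2}\otimes K[x_n,y_{n+1}]$, $C_{n-1}$, and $B_{n-1}\otimes K[y_{n+1}]$; Stanley depth by rerunning with Lemmas \ref{le1}, \ref{Pro7}, \ref{exacthersdepth}) matches the paper's setup, and it does close the cases $n\equiv 3,0\,(\mathrm{mod}\,4)$. The genuine gap is your mechanism for exactness: computing $\depth\bigl((I(C_n):x_n)/I(C_n)\bigr)$ as the minimum of the two pieces produced by Lemma \ref{lemiso} and then squeezing with both inequalities of the Depth Lemma. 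First, the decomposition of Lemma \ref{lemiso} is not an $S$-module direct sum but only a multidegree-preserving vector-space decomposition (equivalently, the associated graded of a module filtration): e.g.\ $u=x_1y_n$ lies in the $y_n$-class, yet $x_{n-1}u=x_1x_{n-1}y_n\notin I(C_n)$ for $n\geq 5$, so multiplication by $x_{n-1}$ maps the second summand nontrivially into the first. Hence the minimum of the piece depths is only a lower bound for the depth of the submodule, never an exact value. Second, and decisively, in the residue class $n\equiv 2\,(\mathrm{mod}\,4)$ this minimum is strictly too small: your piece from deleting $N[x_{n-1}]$ is $K[V(C_{n-3})]/I(C_{n-3})$ tensored with an isolated-edge quotient and a free variable, of depth $\lceil\frac{n-3}{2}\rceil+1+2=\frac{n}{2}+2$, while the piece from deleting $N[y_n]\cup\{x_{n-1}\}$ is $K[V(B_{n-2})]/I(B_{n-2})\otimes_K K[y_n]$, of depth $\lceil\frac{n-1}{2}\rceil+1=\frac{n}{2}+1$; the minimum $\frac{n}{2}+1$ sits below $\depth(S/(I(C_n):x_n))=\depth(C_{n-2}\text{-quotient})+2=\frac{n}{2}+2$, so your rule ``$\depth U<\depth W$ forces $\depth V=\depth U$'' would yield $\frac{n}{2}+1$ and \emph{contradict} the claimed value $\lceil\frac{n+1}{2}\rceil+1=\frac{n}{2}+2$. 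Indeed, running Depth Lemma (2) backwards from the true value shows $\depth\bigl((I(C_n):x_n)/I(C_n)\bigr)\geq \frac{n}{2}+2$, strictly above your minimum, so no amount of residue-by-residue bookkeeping repairs the method; read merely as a lower bound it leaves a gap of one in exactly the case ($n\equiv 2$) where your stage-one sequences are also inconclusive.

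The paper's proof of this lemma never invokes Lemma \ref{lemiso} (that tool appears only later, for the circulant graphs themselves). All of its exact sequences are between \emph{quotient rings}: for $n\equiv 1\,(\mathrm{mod}\,4)$ it splits first at $y_{n+1}$ (giving $B_n$ on the sum side) and then at $x_n$; for $n\equiv 2\,(\mathrm{mod}\,4)$ it needs a triple-nested sequence at $x_{n-1}$, then $y_{n-1}$, then $x_{n-2}$, whose end terms reduce to $C_{n-3}$, $B_{n-2}\otimes\mathbb{P}_3$, $C_{n-4}$ and $B_{n-3}$. Moreover, every case is closed from above by Lemma \ref{Cor7} at a hand-picked monomial with colon ring of exactly the target depth: $x_{n-2}y_n$, $x_n$, $x_n$, $x_{n-1}y_n$ in the residues $1,2,3,0$ respectively. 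Your proposal identifies none of these monomials and instead leans on the flawed submodule computation, so both the depth argument and the matching Stanley depth upper bounds (which require the same colons via Lemma \ref{Pro7}) remain open in the residues $n\equiv 1,2\,(\mathrm{mod}\,4)$.
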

		
		\begin{proof} First, we prove the result for depth. If $n=2,$ then consider the following short exact sequence \begin{equation}\label{dd1}
0\longrightarrow S/(I(C_2):y_{2})\xrightarrow{\cdot y_{2}} S/I(C_2)\longrightarrow S/(I(C_2),y_{2})\longrightarrow 0.
\end{equation} Here $S/(I(C_2):y_2)\cong K[x_1,y_2,y_4]$ and  $S/(I(C_2),y_2)\cong K[y_3] \tensor_K K[V(\mathbb{P}_4)]/I(\mathbb{P}_4).$  By using  Lemma \ref{111} and Lemma \ref{paath}, we get $\depth(S/(I(C_2):y_2))=\depth( K[x_1,y_2,y_4])=3$ and $\depth(S/(I(C_2),y_2))=\depth(  K[y_3])+\depth(K[V(\mathbb{P}_4)]/I(\mathbb{P}_4))=3.$  By applying Lemma \ref{exacther} on Equation (\ref{dd1}), we get  $\depth(S/(I(C_2))=3=\lceil\frac{2+1}{2} \rceil+1.$ Let $n\geq 3$ and consider the following short exact sequences \begin{equation*}
0\longrightarrow S/(I(C_n):x_{n})\xrightarrow{\cdot x_{n}} S/I(C_n)\longrightarrow S/(I(C_n),x_{n})\longrightarrow 0,
\end{equation*} \begin{equation*}
0\longrightarrow S/\big((I(C_n),x_{n}):y_{n+1}\big)\xrightarrow{\cdot y_{n+1}} S/(I(C_n),x_{n})\longrightarrow S/\big((I(C_n),x_{n}),y_{n+1}\big)\longrightarrow 0, 
\end{equation*} and  by Depth Lemma  \begin{equation}\label{es322}
\depth(S/I(C_n)\geq \min \big\{\depth\big(S/(I(C_n):x_{n})\big), \depth\big(S/(I(C_n),x_{n})\big)\big\},
\end{equation}\begin{equation}\label{es3222} 
\depth(S/(I(C_n),x_{n}))\geq \min \big\{\depth\big(S/((I(C_n),x_{n}):y_{n+1})\big), \depth \big(S/((I(C_n),x_{n}),y_{n+1})\big)\big\}.
\end{equation} After a suitable numbering of variables, we have the following $K$-algebra isomorphisms:
 \begin{equation}\label{s112}
    S/(I(C_{n}):x_{n}) \cong K[V(C_{n-2})]/I(C_{n-2}) \tensor_K K[x_{n},y_{n+1}].
\end{equation} 
\begin{equation}\label{s113}
    S/\big((I(C_n),x_{n}),y_{n+1}\big)\cong K[V(C_{n-1})]/I(C_{n-1}),
\end{equation} \begin{equation}\label{s114}
    S/\big((I(C_n),x_{n}):y_{n+1}\big)\cong K[V(B_{n-1})]/I(B_{n-1}) \tensor_K K[y_{n+1}].
\end{equation} If $n=3,$ we have by Equation (\ref{s112}), \begin{equation}\label{fhj}
    S/(I(C_{3}):x_{3}) \cong K[V(C_{1})]/I(C_{1})  \tensor_K K[x_{3},y_{4}]. \end{equation} By Lemma \ref{111} and Remark \ref{rem1}, we have  $\depth(S/(I(C_{3}):x_{3}))=\depth(K[V(C_{1})]/I(C_{1}))+2=\depth(K[V(\mathbb{S}_4)]/I(\mathbb{S}_4))+2=3.$  By using Equations (\ref{s113}) and   (\ref{s114}),    $ S/\big((I(C_3),x_{3}),y_{4}\big)\cong K[V(C_{2})]/I(C_{2})$ and $S/\big((I(C_3),x_{3}):y_{4}\big)\cong K[V(B_{2})]/I(B_{2}) \tensor_K K[y_{4}].$ By using induction on $n,$ we have $\depth\big(S/((I(C_3),x_{3}),y_{4})\big)=\depth(K[V(C_{2})]/I(C_{2}))=3$ and by Lemma \ref{111} and Lemma  \ref{The1},  we get $\depth\big(S/((I(C_3),x_{3}):y_{4})\big)=\depth(K[V(B_{2})]/I(B_{2}))+1=3.$ By   Equation (\ref{es3222}), $\depth(S/(I(C_3),x_{3}))\geq 3$ and by Equation (\ref{es322}), $\depth(S/(I(C_3))\geq 3.$ For the upper bound, we use  Lemma \ref{Cor7} and Equation (\ref{fhj}), that is $\depth(S/(I(C_3))\leq  \depth\big(S/(I(C_3):x_{3})\big)=3. $ Thus we get $\depth(S/(I(C_3))=\lceil\frac{3}{2}\rceil+1=3.$ If  $n=4,$ by using similar strategy as we did when $n=3,$ we get the required lower bound that is $\depth(S/I(C_{4}))\geq \lceil \frac{4}{2}\rceil+1=3.$  For the upper bound,  since $x_{3}y_{4}\notin I(C_4),$  we have  $$  S/(I(C_{4}):x_{3}y_{4}) \cong  K[V(\mathbb{S}_4)]/I(\mathbb{S}_4) \tensor_K K[x_{3},y_{4}],$$  and by Lemma \ref{111} and Lemma \ref{leAli}, $\depth(S/(I(C_{4}):x_{3}y_{4}))=\depth( K[V(\mathbb{S}_4)]/I(\mathbb{S}_4))+2=3.$ Therefore, by  Lemma \ref{Cor7}, $\depth(S/I(C_{4}))\leq \depth(S/(I(C_{4}):x_{3}y_{4}))=3.$  Let $n\geq  5.$  We consider the following cases:
  \begin{description} \item[Case 1] Let $n\equiv 1(\mod 4).$ We  consider the short exact sequence
\begin{equation}\label{es31hh}
    0\longrightarrow S/(I(C_n):y_{n+1})\xrightarrow{\cdot y_{n+1}} S/I(C_n)\longrightarrow S/(I(C_n),y_{n+1})\longrightarrow 0,\end{equation} 
Here $S/(I(C_n),y_{n+1}) \cong K[V(B_n)]/I(B_n)$. By Lemma \ref{The1}, $\depth(S/(I(C_n),y_{n+1}))= \lceil\frac{n+1}{2}\rceil.$   Consider another short exact sequence
\begin{equation}\label{es32hh}\begin{split}
0\longrightarrow S/\big((I(C_n):y_{n+1}):x_{n}\big)\xrightarrow{\cdot x_{n}} &S/(I(C_n):y_{n+1})\longrightarrow  \\ &S/\big((I(C_n):y_{n+1}),x_{n}\big)\longrightarrow 0.\end{split}
\end{equation} Here $S/\big((I(C_n):y_{n+1}),x_{n}\big)\cong K[V(B_{n-1})]/I(B_{n-1}) \tensor_K K[y_{n+1}].$ By Lemma \ref{111} and Lemma \ref{The1}, it follows that $\depth\big(S/((I(C_n):y_{n+1}),x_{n})\big)= \lceil\frac{n-1+1}{2}\rceil+1=\lceil\frac{n}{2}\rceil+1.$ Also we have  $S/\big((I(C_n):y_{n+1}):x_{n}\big)\cong K[V(C_{n-2})]/I(C_{n-2})\tensor_K K[y_{n+1},x_{n}].$ Since $n-2\equiv 3(\mod 4),$ by induction on $n$ and  Lemma \ref{111}, we get $\depth(S/((I(C_n):y_{n+1}):x_{n}))= \lceil\frac{n-2}{2}\rceil+1+2=\lceil\frac{n}{2}\rceil+2.$ By applying Depth Lemma on Equations (\ref{es31hh}) and  (\ref{es32hh}) \begin{equation}\label{vv2}
\depth(S/(I(C_n))\geq \min\big \{\depth(S/(I(C_n):y_{n+1})), \depth(S/(I(C_n),y_{n+1}))\big\},
\end{equation} \begin{equation}\label{vv1}\begin{split}
\depth(S/(I(C_n):y_{n+1}))\geq &\min\big \{\depth\big(S/((I(C_n):y_{n+1}):x_{n})\big), \\&\quad\quad\quad\depth\big(S/((I(C_n):y_{n+1}),x_{n})\big)\big\},
\end{split}
\end{equation}  By Equation (\ref{vv1}), $\depth(S/(I(C_n):y_{n+1}))\geq \lceil\frac{n}{2}\rceil+1.$ Since,  $\depth(S/(I(C_n):y_{n+1})) > \depth(S/(I(C_n),y_{n+1})),$ by Equation (\ref{vv2})
 we get $  \depth(S/(I(C_n))\geq \lceil\frac{n+1}{2}\rceil.$ For the other inequality, we have  $x_{n-2}y_n \notin I(C_n),$ and the following $K$-algebra isomorphism: \begin{equation*}
    S/(I(C_{n}):x_{n-2}y_{n}) \cong K[V(C_{n-4})]/I(C_{n-4}) \tensor_K K[x_{n-2},y_{n}].
\end{equation*} Since $n-4\equiv 1(\mod 4),$ by Remark \ref{rem1}, Lemma \ref{Cor7}, Lemma \ref{111}  and induction on $n,$ we get $\depth(S/I(C_{n}))\leq \depth(S/(I(C_{n}):x_{n-2}y_{n}))=\lceil\frac{n-4+1}{2}\rceil+2=\lceil\frac{n+1}{2}\rceil,$ as required.
				\item[Case 2] Let $n\equiv 2(\mod 4).$ Consider the following short exact sequences: \begin{equation*}
0\longrightarrow S/(I(C_n):x_{n-1})\xrightarrow{\cdot x_{n-1}} S/I(C_n)\longrightarrow S/(I(C_n),x_{n-1})\longrightarrow 0,
\end{equation*} \begin{equation*}\begin{split}
0\longrightarrow S/\big((I(C_n),x_{n-1}):y_{n-1}\big)&\xrightarrow{\cdot y_{n-1}} S/(I(C_n),x_{n-1})\\& \longrightarrow S/\big((I(C_n),x_{n-1}),y_{n-1}\big)\longrightarrow 0,\end{split}
\end{equation*} \begin{equation*} \begin{split}
0\longrightarrow S/\big(((I(C_n),x_{n-1}):y_{n-1}):x_{n-2}\big) &\xrightarrow{\cdot x_{n-2}} S/\big((I(C_n),x_{n-1}):y_{n-1}\big)\\& \longrightarrow S/\big(((I(C_n),x_{n-1}):y_{n-1}),x_{n-2}\big)\longrightarrow 0.
\end{split}
\end{equation*} We have the following $K$-algebra isomorphisms: $$S/(I(C_{n}):x_{n-1}) \cong K[V(C_{n-3})]/I(C_{n-3}) \tensor_K K[x_{n-1}] \tensor_K K[V(\mathbb{P}_2)]/I(\mathbb{P}_2),$$ $$S/\big((I(C_n),x_{n-1}),y_{n-1}\big)\cong K[V(B_{n-2})]/I(B_{n-2}) \tensor_K K[V(\mathbb{P}_3)]/I(\mathbb{P}_3),$$ $$S/\big(((I(C_n),x_{n-1}):y_{n-1}):x_{n-2}\big) \cong K[V(C_{n-4})]/I(C_{n-4}) \tensor_K K[x_{n-2},y_{n-1},x_n,y_{n+1}],$$
$$ S/\big(((I(C_n),x_{n-1}):y_{n-1}),x_{n-2}\big) \cong K[V(B_{n-3})]/I(B_{n-3}) \tensor_K K[y_{n-1},x_{n},y_{n+1}].$$ 
 Since  $n-3\equiv 3(\mod 4),$ by using induction on $n,$ Lemma \ref{LEMMA1.5} and Lemma \ref{paath}, we have $\depth(S/I(C_{n}):x_{n-1})=\depth(K[V(C_{n-3})]/I(C_{n-3}))  + \depth(K[V(\mathbb{P}_2)]/I(\mathbb{P}_2))+1=\lceil\frac{n-3}{2}\rceil+3=\lceil\frac{n+1}{2}\rceil+1.$   By using Lemma \ref{paath}, Lemma \ref{The1} and Lemma \ref{LEMMA1.5}, we have $\depth({S/((I(C_n),x_{n-1}),y_{n-1})})=\depth(K[V(B_{n-2})]/I(B_{n-2}) )+\depth(K[V(\mathbb{P}_3)]/I(\mathbb{P}_3))=\lceil\frac{n-2+1}{2}\rceil+2=\lceil\frac{n+1}{2}\rceil+1.$  Since $n-4\equiv 2(\mod 4)$ and $n-3\equiv 3(\mod 4).$  By induction on $n$ and Lemma \ref{Cor7}, we have \begin{equation*}
     \begin{split}
         \depth\big(S/\big(((I(C_n),x_{n-1}):y_{n-1}):x_{n-2}\big)\big)&=\depth( K[V(C_{n-4})]/I(C_{n-4}))+4\\&=\lceil\frac{n-4+1}{2}\rceil+1+4=\lceil\frac{n+3}{2}\rceil+2,
     \end{split}
 \end{equation*}
and by Lemma \ref{The1},
\begin{equation*}
    \begin{split}
        \depth\big(S/\big(((I(C_n),x_{n-1}):y_{n-1}),x_{n-2}\big)\big)&=\depth(K[V(B_{n-3})]/I(B_{n-3}) )+3\\&=\lceil\frac{n-3+1}{2}\rceil+3=\lceil\frac{n+1}{2}\rceil+1.
    \end{split}
\end{equation*}
By Depth Lemma on short exact sequences \begin{equation}\label{d44}
\depth(S/I(C_n)\geq \min \big\{\depth(S/(I(C_n):x_{n-1})), \depth(S/(I(C_n),x_{n-1}))\big\}.
\end{equation}\begin{equation}\label{D22}\begin{split}
\depth(S/(I(C_n),x_{n-1}))&\geq \min \Big\{\depth\big(S/((I(C_n),x_{n-1}):y_{n-1})\big),\\& \quad \quad\quad\quad\depth\big(S/((I(C_n),x_{n-1}),y_{n-1})\big)\Big\}.\end{split} \end{equation} \begin{equation}\label{d11} \begin{split}
\depth\big(S/((I(C_n),x_{n-1}):y_{n-1})\big)& \geq \min \Big\{\depth\big(S/(((I(C_n),x_{n-1}):y_{n-1}):x_{n-2})\big), \\&\quad\quad \quad \quad\depth\big(S/(((I(C_n),x_{n-1}):y_{n-1}),x_{n-2})\big)\Big\}.\end{split}\end{equation} 
Clearly $\lceil\frac{n+1}{2}\rceil+1 < \lceil\frac{n+3}{2}\rceil+2,$ by Equation (\ref{d11}) we have $\depth(S/((I(C_n),x_{n-1}):y_{n-1}))\geq \lceil\frac{n+1}{2}\rceil+1 .$ By Equation (\ref{D22}), we have $\depth(S/(I(C_n),x_{n-1}))\geq  \lceil\frac{n+1}{2}\rceil+1$ and by Equation (\ref{d44}), we get $\depth(S/I(C_n)\geq \lceil\frac{n+1}{2}\rceil+1.$ For the other inequality, we have  $x_{n} \notin I(C_n).$  Since $n-2\equiv 0(\mod 4),$ by using induction on $n,$  Lemma \ref{Cor7} and Lemma \ref{111} on Equation (\ref{s112}), we get $\depth(S/I(C_{n}))\leq \depth(S/(I(C_{n}):x_{n}))=\lceil\frac{n-2}{2}\rceil+1+2=\lceil\frac{n}{2}\rceil+2=\lceil\frac{n+1}{2}\rceil+1.$ 
				\item[Case 3] If $n\equiv 3(\mod 4).$  Since $n-2\equiv 1(\mod 4)$ and  $n-1\equiv 2(\mod 4),$ by induction on $n$ and Lemma \ref{111} on Equation (\ref{s112}), we get \begin{equation}\label{s111}\begin{split}\depth(S/(I(C_n):x_{n}))&=\depth( K[V(C_{n-2})]/I(C_{n-2}))+2\\&= \lceil\frac{n-2+1}{2}\rceil+2=\lceil\frac{n+1}{2}\rceil+1=\lceil\frac{n}{2}\rceil+1. 
				\end{split}\end{equation}   
By Equation   (\ref{s113}), we get  $\depth\big(S/((I(C_n),x_{n}),y_{n+1})\big)=\depth(K[V(C_{n-1})]/I(C_{n-1}))= \lceil\frac{n-1+1}{2}\rceil+1=\lceil\frac{n}{2}\rceil+1$ and by Lemma \ref{111} and Lemma \ref{The1} on  Equation (\ref{s114}), we get $\depth\big(S/((I(C_n),x_{n}):y_{n+1})\big)=\depth(K[V(B_{n-1})]/I(B_{n-1}))+1= \lceil\frac{n}{2}\rceil+1.$ Since $\depth\big(S/((I(C_n),x_{n}),y_{n+1})\big)=\depth\big(S/((I(C_n),x_{n}):y_{n+1})\big),$ therefore by  Equation (\ref{es3222}), $\depth(S/(I(C_n),x_{n}))\geq  \lceil\frac{n}{2}\rceil+1.$ Also $\depth(S/(I(C_n),x_{n}))= \depth(S/(I(C_n):x_{n})),$ by Equation (\ref{es322}) we get  $\depth(S/(I(C_n))\geq \lceil\frac{n}{2}\rceil+1.$ For the upper bound, we use  Lemma \ref{Cor7} and Equation (\ref{s111}), that is $\depth(S/(I(C_n))\leq  \depth(S/(I(C_n):x_{n}))=\lceil\frac{n}{2}\rceil+1, $ the required result.
				\item[Case 4] Let $n\equiv 0(\mod 4).$   In this case $n-2\equiv 2(\mod 4),$  by using induction on $n$ and Lemma \ref{111}  on Equation (\ref{s112}), we get 
    \begin{equation*}
    \depth(S/(I(C_n):x_{n}))=\depth( K[V(C_{n-2})]/I(C_{n-2}))+2=\lceil\frac{n-2+1}{2}\rceil+3=\lceil\frac{n+5}{2}\rceil.
\end{equation*}  
    As $n-1\equiv 3(\mod 4),$  by using induction on $n$ and Lemma \ref{111} on Equation  (\ref{s113}),    $$\depth\big(S/((I(C_n),x_{n}),y_{n+1})\big)= \depth(K[V(C_{n-1})]/I(C_{n-1}))= \lceil\frac{n-1}{2}\rceil+1=\lceil\frac{n}{2}\rceil+1.$$ By applying Lemma \ref{111} and Lemma \ref{The1}  on Equation  (\ref{s114}),  we have 
    \begin{equation*}
        \begin{split}
\depth\big(S/((I(C_n),x_{n}):y_{n+1})\big)&=\depth( K[V(B_{n-1})]/I(B_{n-1}))+1\\&= \lceil\frac{n-1+1}{2}\rceil+1=\lceil\frac{n}{2}\rceil+1.
        \end{split}
    \end{equation*} By  using Equation (\ref{es3222}), we get $\depth(S/(I(C_n),x_{n}))\geq \depth(S/(I(C_n),x_{n})=\lceil\frac{n}{2}\rceil+1.$ Here we have  $\depth(S/(I(C_n),x_{n}))\geq \depth(S/(I(C_n):x_{n})),$ thus by Equation (\ref{es322}), we get   $\depth(S/(I(C_n))\geq \lceil\frac{n}{2}\rceil+1.$  For the other inequality,   $x_{n-1}y_n \notin I(C_n)$ and consider \begin{equation}\label{bb1}
    S/(I(C_{n}):x_{n-1}y_{n}) \cong K[V(C_{n-3})]/I(C_{n-3}) \tensor_K K[x_{n-1},y_{n}].
\end{equation} Since $n-3\equiv 1(\mod 4),$ by using induction on $n,$  Lemma \ref{Cor7} and  Lemma \ref{111}  on Equation (\ref{bb1}),   $\depth(S/I(C_{n}))\leq \depth(S/(I(C_{n}):x_{n-1}y_n))=\depth( K[V(C_{n-3})]/I(C_{n-3}))+2=\lceil\frac{n-3+1}{2}\rceil+2=\lceil\frac{n}{2}\rceil+1.$ 
    \end{description} This completes the proof for depth. Proof for Stanley depth is similar as depth just by replacing Depth Lemma and Lemma \ref{Cor7} by Lemma \ref{le1} and 
 Lemma \ref{Pro7}. Also by using Lemma \ref{exacthersdepth} in place of Lemma \ref{exacther}.
\end{proof} \begin{Corollary}
     Let $n\geq 2$ and $S=K[V(C_n)]$. Then 
			
			\begin{equation*}
				\pdim(S/I(C_n))=\left\{\begin{matrix}
				2n-\lceil\frac{n}{2} \rceil+1, & \quad \text{if}\, \, n\equiv 0,3\, (\mod\, 4);\\ \\
					2n-\lceil\frac{n+1}{2} \rceil+2, &\quad \text{if}\, \, n\equiv 1 \, (\mod\, 4);
					\\	\\2n-\lceil\frac{n+1}{2} \rceil+1, &\quad \text{if}\, \, n\equiv 2\, (\mod\, 4).
				\end{matrix}\right.
			\end{equation*} 
\end{Corollary}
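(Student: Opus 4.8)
The plan is to deduce this entirely from the Auslander--Buchsbaum formula (Lemma \ref{auss13}) together with the depth values already established in Lemma \ref{diamond}; no new structural work is needed. The first step is to record the size of the ambient ring. Since $V(C_n)=V(A_n)\cup\{y_{n+1},y_{n+2}\}$ and $|V(A_n)|=2n$, the ring $S=K[V(C_n)]$ is a polynomial ring in $2n+2$ variables, so $\depth(S)=2n+2$. As $S/I(C_n)$ is a finitely generated graded $S$-module of finite projective dimension, Lemma \ref{auss13} applies (in its graded form, exactly as used for the corollary following Lemma \ref{The1}) and yields
$$\pdim(S/I(C_n))=\depth(S)-\depth(S/I(C_n))=2n+2-\depth(S/I(C_n)).$$

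The remaining step is a routine case analysis according to the residue of $n$ modulo $4$, substituting the depth values from Lemma \ref{diamond}. For $n\equiv 0,3\,(\mod\, 4)$ we have $\depth(S/I(C_n))=\lceil\frac{n}{2}\rceil+1$, so $\pdim=2n+2-(\lceil\frac{n}{2}\rceil+1)=2n-\lceil\frac{n}{2}\rceil+1$. For $n\equiv 1\,(\mod\, 4)$ we have $\depth(S/I(C_n))=\lceil\frac{n+1}{2}\rceil$, so $\pdim=2n+2-\lceil\frac{n+1}{2}\rceil=2n-\lceil\frac{n+1}{2}\rceil+2$. Finally, for $n\equiv 2\,(\mod\, 4)$ we have $\depth(S/I(C_n))=\lceil\frac{n+1}{2}\rceil+1$, so $\pdim=2n+2-(\lceil\frac{n+1}{2}\rceil+1)=2n-\lceil\frac{n+1}{2}\rceil+1$. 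These three expressions match the three branches in the statement, completing the argument.

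Since every ingredient is already available, there is no genuine obstacle; the proof is really just one invocation of Lemma \ref{auss13} per congruence class. The only points demanding care are the correct count of $2n+2$ variables (it is easy to overlook the two extra vertices $y_{n+1},y_{n+2}$ that distinguish $C_n$ from $A_n$) and the faithful transcription of the ceiling expressions across the three cases of Lemma \ref{diamond}. In particular, the result holds over an arbitrary field $K$ and uses no property of $C_n$ beyond the depth formula already proved.
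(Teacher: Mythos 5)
Your proposal is correct and takes exactly the same route as the paper, which likewise derives the corollary in one step from the Auslander--Buchsbaum formula (Lemma \ref{auss13}) combined with the depth values of Lemma \ref{diamond}. The explicit count of $2n+2$ variables and the three-case ceiling arithmetic you write out are precisely the routine details the paper leaves implicit, and all three computations check out.
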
 \begin{proof}
    By using  Lemma \ref{auss13} and Lemma \ref{diamond}, the  result follows.
\end{proof}
	\begin{Lemma}\label{dotfamily}
		    Let $n\geq 2$ and $S=K[V(D_n)]$. Then 
			
			\begin{equation*}
				\depth(S/I(D_n))=\sdepth(S/I(D_n))=\left\{\begin{matrix}
				\lceil\frac{n+1}{2} \rceil+1, & \quad \text{if}\, \, n\equiv 0,1\, (\mod\, 4);\\ \\
					\lceil\frac{n+1}{2} \rceil, &\quad \text{if}\, \, n\equiv 2,3 \, (\mod\, 4).
				\end{matrix}\right.
			\end{equation*}
		\end{Lemma}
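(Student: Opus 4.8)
The plan is to establish the depth formula first and then read off the Stanley depth from the very same computation, just as in Lemma~\ref{The1} and Lemma~\ref{diamond}; because Lemma~\ref{diamond} gives $\depth=\sdepth$ for every $S/I(C_m)$, the Stanley version goes through verbatim once one replaces Lemma~\ref{exacther} by Lemma~\ref{exacthersdepth} and invokes Lemma~\ref{111} (which is exact for a single new variable in both invariants). The key decision is which variable to split on. Peeling off a pendant ($x_{n+1}$ or $y_{n+1}$) turns the right-hand term into $K[V(B_n)]/I(B_n)$, whose depth $\lceil\frac{n+1}{2}\rceil$ is one short of the target in two of the four residue classes, so the Depth Lemma alone is inconclusive there. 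Instead I would split on the \emph{attachment} vertex $x_1$, via
\begin{equation*}
0\longrightarrow S/(I(D_n):x_1)\xrightarrow{\;\cdot x_1\;} S/I(D_n)\longrightarrow S/(I(D_n),x_1)\longrightarrow 0 .
\end{equation*}

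First I would verify, after relabeling, the two $K$-algebra isomorphisms
\begin{equation*}
S/(I(D_n):x_1)\cong K[V(C_{n-2})]/I(C_{n-2})\otimes_K K[x_1],\qquad S/(I(D_n),x_1)\cong K[V(C_{n-1})]/I(C_{n-1})\otimes_K K[x_{n+1}].
\end{equation*}
Both come from the combinatorics of $D_n$: killing $x_1$ (the quotient) isolates $x_{n+1}$ and turns the rung $\{x_1,y_1\}$ into a pendant, leaving the ladder on columns $2,\dots,n$ with a $y$-pendant at each of its two top ends, i.e.\ $C_{n-1}$; while the colon $(I(D_n):x_1)=(y_1,x_2,x_{n+1})+(\dots)$ additionally kills $x_2$ and $y_1$, leaving the ladder on columns $3,\dots,n$ with a $y$-pendant at each top end, i.e.\ $C_{n-2}$, and frees $x_1$. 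Using Lemma~\ref{diamond} and Lemma~\ref{111} these give $\depth(S/(I(D_n):x_1))=\depth(S/I(C_{n-2}))+1$ and $\depth(S/(I(D_n),x_1))=\depth(S/I(C_{n-1}))+1$.

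The point of splitting on $x_1$ is that \emph{both} sides are $C$-graphs, so the value can be extracted with a single application of Lemma~\ref{exacther}: a short case check on $n\,(\mod 4)$ against the formula of Lemma~\ref{diamond} shows $\depth(S/I(C_{n-1}))\geq \depth(S/I(C_{n-2}))$ for every $n$, hence $\depth(S/(I(D_n),x_1))\geq \depth(S/(I(D_n):x_1))$ and therefore
\begin{equation*}
\depth(S/I(D_n))=\depth(S/(I(D_n):x_1))=\depth(S/I(C_{n-2}))+1 .
\end{equation*}
Substituting the four values of $\depth(S/I(C_{n-2}))$ from Lemma~\ref{diamond} collapses this to $\lceil\frac{n+1}{2}\rceil+1$ when $n\equiv 0,1\,(\mod 4)$ and to $\lceil\frac{n+1}{2}\rceil$ when $n\equiv 2,3\,(\mod 4)$, which is the claim; the identical argument with Lemma~\ref{exacthersdepth} yields the Stanley depth equality. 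The small cases $n=2,3$, where $C_{n-2}$ degenerates, must be handled separately using Remark~\ref{rem1} and the values for $\mathbb{S}_4$ and $\mathbb{P}_2$.

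The only genuine work I anticipate is the two graph isomorphisms above: correctly tracking how the colon and the quotient by $x_1$ create the degenerate pendants $y_1$ and $y_2$ and recognizing the resulting ladders-with-two-top-pendants as $C_{n-1}$ and $C_{n-2}$. The remaining monotonicity check $\depth(S/I(C_{n-1}))\geq \depth(S/I(C_{n-2}))$, which legitimizes Lemma~\ref{exacther} uniformly, is a routine comparison of the piecewise formula of Lemma~\ref{diamond} and is where the four-case shape of the final answer originates.
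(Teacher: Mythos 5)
Your proposal is correct, and it follows a genuinely more uniform route than the paper's. Both of your isomorphisms are right: since $N_{D_n}(x_1)=\{y_1,x_2,x_{n+1}\}$, the colon kills $y_1,x_2,x_{n+1}$ and leaves the ladder on columns $3,\dots,n$ with a top pendant at each end ($y_2$ at $y_3$, $y_{n+1}$ at $y_n$), i.e.\ $C_{n-2}$ tensored with the free variable $x_1$, while the quotient isolates $x_{n+1}$ and leaves the ladder on columns $2,\dots,n$ with top pendants $y_1$ and $y_{n+1}$, i.e.\ $C_{n-1}\otimes_K K[x_{n+1}]$. Your monotonicity claim also checks out against Lemma \ref{diamond}: writing $d(m)=\depth(K[V(C_m)]/I(C_m))$, one finds $d(n-1)=d(n-2)$ for $n\equiv 0,1,2\,(\mod 4)$ and $d(n-1)=d(n-2)+2$ for $n\equiv 3\,(\mod 4)$, so Lemma \ref{exacther} applies in every residue class and yields $\depth(S/I(D_n))=d(n-2)+1$, which reproduces the stated piecewise formula in all four cases; the Stanley side transfers verbatim via Lemma \ref{exacthersdepth}, because Lemma \ref{diamond} gives the exact equality $\sdepth=\depth$ for the $C$-graphs and Lemma \ref{111} is exact for a single adjoined variable. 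By contrast, the paper argues in four cases with three different devices: for $n\equiv 2$ it splits at the pendant $x_{n+1}$, whose quotient is $B_n$ (so it needs Lemma \ref{The1}); for $n\equiv 3$ it obtains the lower bound from the Depth Lemma and the upper bound separately from Lemma \ref{Cor7} applied to $(I(D_n):y_n)$; and for $n\equiv 0,1$ it splits at $y_n$, where colon and quotient become exactly $C_{n-2}$ and $C_{n-1}$ --- which, under the symmetry $x_i\mapsto y_{n+1-i}$, $y_i\mapsto x_{n+1-i}$ of $D_n$, is the mirror image of your splitting at $x_1$. So your argument is in effect the paper's Case 3--4 device promoted to all residues; what it buys is the elimination of the case analysis and of the auxiliary inputs $B_n$ and Lemma \ref{Cor7}, at the cost only of the degenerate cases you already flag: for $n=3$, Remark \ref{rem1} gives $C_1\cong\mathbb{S}_4$, hence depth $1+1=2=\lceil\frac{4}{2}\rceil$, and for $n=2$ the colon becomes a $\mathbb{P}_2$-piece tensored with $K[x_1]$ (depth $2$) while the quotient becomes $\mathbb{S}_4$ plus an isolated vertex (depth $2$), so Lemma \ref{exacther} again gives $2=\lceil\frac{3}{2}\rceil$, as required.
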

		
		\begin{proof}  We consider the following cases:
  \begin{description}

\item[Case 1] Let $n\equiv 2(\mod 4).$ Consider the short exact sequence
	\begin{equation}\label{es3133}
	0\longrightarrow S/(I(D_{n}):x_{n+1})\xrightarrow{\cdot x_{n+1}} S/I(D_{n})\longrightarrow S/(I(D_{n}),x_{n+1})\longrightarrow 0.
	\end{equation} We have
 \begin{equation}\label{f11}
     S/(I(D_{n}):x_{n+1})\cong K[V(C_{n-1})]/I(C_{n-1})\tensor_K K[x_{n+1}],
 \end{equation}
\begin{equation}\label{f22}
     S/(I(D_{n}),x_{n+1})\cong K[V(B_{n})]/I(B_{n}),
\end{equation}
Since $n-1\equiv 1(\mod 4).$ By using  Lemma \ref{111}, Lemma \ref{diamond} and Remark \ref{rem1}
\begin{equation*}
     \depth{( S/(I(D_{n}):x_{n+1}))} =\depth{(K[V(C_{n-1})]/I(C_{n-1}))}+1=\lceil\frac{n+2}{2}\rceil,
\end{equation*} and by Lemma \ref{The1} and Lemma \ref{LEMMA1.5}
\begin{equation}\label{f33}
    \depth{( S/(I(D_{n}),x_{n+1}))}=\depth{(K[V(B_{n})]/I(B_{n}))}=\lceil\frac{n+1}{2}\rceil.
\end{equation} Here $\lceil\frac{n+1}{2}\rceil=\lceil\frac{n+2}{2}\rceil,$ therefore  by Lemma \ref{exacther}, we get the required result. The proof for Stanley depth is similar by using Lemma \ref{exacthersdepth} in place of Lemma \ref{exacther}.

\item[Case 2] Let $n\equiv 3(\mod 4).$ 
If we consider Equation (\ref{es3133}), then by Depth Lemma \begin{equation*} \depth(S/I(D_{n}))\geq \min\{\depth(S/(I(D_{n}):x_{n+1})), \depth(S/(I(D_{n}),x_{n+1})\}. \end{equation*}
In this case $n-1\equiv 2(\mod 4),$ thus by using Equation (\ref{f11}) and applying  Lemma \ref{111} and Lemma \ref{diamond}, we get
\begin{equation*}
     \depth{( S/(I(D_{n}):x_{n+1}))} =\depth{(K[V(C_{n-1})]/I(C_{n-1}))}+1=\lceil\frac{n}{2}\rceil+2=\lceil\frac{n+2}{2}\rceil+1.
\end{equation*} Thus by using Equation (\ref{f33}) and Depth Lemma we get $\depth(S/I(D_{n}))\geq \lceil\frac{n+1}{2}\rceil.$ For the other inequality, since  $y_{n} \notin I(D_n),$ after suitable numbering of the variables, we have the following $K$- algebra isomorphism: \begin{equation}\label{E1}
    S/(I(D_{n}):y_{n}) \cong K[V(C_{n-2})]/I(C_{n-2}) \tensor_K K[y_{n}].
\end{equation} Since $n-2\equiv 1(\mod 4),$ by applying  Lemma \ref{Cor7}, Lemma \ref{111},  Lemma \ref{diamond} and Remark \ref{rem1} on Equation (\ref{E1}), $\depth(S/I(D_{n}))\leq \depth(S/(I(D_{n}):y_{n}))=\depth{(K[V(C_{n-2})]/I(C_{n-2}) )}+\depth{(K[y_{n}])} =\lceil\frac{n-2+1}{2}\rceil+1=\lceil\frac{n+1}{2}\rceil.$ For Stanley depth, by using  Lemma \ref{Pro7} instead of Lemma \ref{Cor7} and a similar strategy for depth, the required result is obtained. \item[Case 3] Let $n\equiv 0(\mod 4).$ We  consider the short exact sequence
\begin{equation*}\label{es31}
0\longrightarrow S/(I(D_n):y_{n})\xrightarrow{\cdot y_{n}} S/I(D_n)\longrightarrow S/(I(D_n),y_{n})\longrightarrow 0,
\end{equation*} 
After renumbering the variables, we have \begin{equation}\label{E2}
    S/(I(D_n),y_{n}) \cong K[V(C_{n-1})]/I(C_{n-1}) \tensor_K K[y_{n+1}].
\end{equation} 
 Since $n-1\equiv 3(\mod 4)$ and $n-2\equiv 2(\mod 4).$ By using Lemma  \ref{111}, Lemma \ref{diamond} on Equations (\ref{E1}) and (\ref{E2}), we get \begin{equation*} 
     \depth(S/(I(D_{n}):y_{n}))=\depth{(K[V(C_{n-2})]/I(C_{n-2}) )}+\depth{(K[y_{n}])}=\lceil\frac{n+1}{2}\rceil+1,
\end{equation*} and 
\begin{equation*}
     \depth(S/(I(D_{n}),y_{n}))=\depth{(K[V(C_{n-1})]/I(C_{n-1}) )}+\depth{(K[y_{n+1}])} =\lceil\frac{n+1}{2}\rceil+1.
\end{equation*} We have  $ \depth(S/(I(D_{n}),y_{n}))=\depth(S/(I(D_{n}):y_{n})),$ thus  by Lemma \ref{exacther}, we get $\depth(S/I(D_n))=\lceil\frac{n+1}{2}\rceil+1.$ The proof for Stanley depth is similar by using Lemma \ref{exacthersdepth} in place of Lemma \ref{exacther}.
\item[Case 4] Let $n\equiv 1(\mod 4).$ In this case $n-1\equiv 0(\mod 4)$ and $n-2\equiv 3(\mod 4).$ The proof is similar to Case 3.

\end{description} This completes the proof.
\end{proof} \begin{Corollary}
     Let $n\geq 2$ and $S=K[V(D_n)]$. Then 
			
			\begin{equation*}
				\pdim(S/I(D_n))=\left\{\begin{matrix}
				2n-\lceil\frac{n+1}{2} \rceil+1, & \quad \text{if}\, \, n\equiv 0,1\, (\mod\, 4);\\ \\
					2n-\lceil\frac{n+1}{2} \rceil+2, &\quad \text{if}\, \, n\equiv 2,3 \, (\mod\, 4).
				\end{matrix}\right.
			\end{equation*}
\end{Corollary}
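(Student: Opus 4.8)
The plan is to mirror the proofs of the two preceding corollaries (for $B_n$ and $C_n$) and obtain the statement as an immediate consequence of the Auslander--Buchsbaum formula (Lemma \ref{auss13}) together with the depth values established in Lemma \ref{dotfamily}. Since $I(D_n)$ is a monomial ideal in the polynomial ring $S=K[V(D_n)]$, the quotient $S/I(D_n)$ is a finitely generated module of finite projective dimension over $S$, so Lemma \ref{auss13} applies with $R=S$.

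First I would record the number of variables. By construction $V(D_n)=V(A_n)\cup\{x_{n+1},y_{n+1}\}$ and $V(A_n)=\bigcup_{i=1}^{n}\{x_i,y_i\}$, so $|V(D_n)|=2n+2$; hence $S$ is a polynomial ring in $2n+2$ indeterminates and $\depth(S)=2n+2$. Substituting into the Auslander--Buchsbaum formula gives
\begin{equation*}
\pdim(S/I(D_n))=\depth(S)-\depth(S/I(D_n))=(2n+2)-\depth(S/I(D_n)).
\end{equation*}

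Next I would split into the same residue classes used in Lemma \ref{dotfamily} and carry out the (trivial) arithmetic. For $n\equiv 0,1\,(\mathrm{mod}\,4)$ we have $\depth(S/I(D_n))=\lceil\frac{n+1}{2}\rceil+1$, whence $\pdim(S/I(D_n))=2n+2-(\lceil\frac{n+1}{2}\rceil+1)=2n-\lceil\frac{n+1}{2}\rceil+1$. For $n\equiv 2,3\,(\mathrm{mod}\,4)$ we have $\depth(S/I(D_n))=\lceil\frac{n+1}{2}\rceil$, whence $\pdim(S/I(D_n))=2n+2-\lceil\frac{n+1}{2}\rceil=2n-\lceil\frac{n+1}{2}\rceil+2$. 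These match the two branches of the claimed formula exactly.

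There is no genuine obstacle here: the only thing to watch is the bookkeeping of the variable count (correctly accounting for the two extra vertices $x_{n+1},y_{n+1}$ adjoined to $A_n$, giving $2n+2$ rather than $2n$), since an off-by-one in $\depth(S)$ would shift every value. Once that count is fixed, the result is a direct substitution, and I would simply write that the conclusion follows from Lemma \ref{auss13} and Lemma \ref{dotfamily}.
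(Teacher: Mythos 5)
Your proposal is correct and matches the paper's proof, which likewise derives the result directly from the Auslander--Buchsbaum formula (Lemma \ref{auss13}) and the depth values in Lemma \ref{dotfamily}. Your explicit bookkeeping of the $2n+2$ variables and the case-by-case arithmetic is exactly the (unwritten) substitution behind the paper's one-line proof, so there is nothing to add.
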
 \begin{proof}
    One can get the required result by using Lemma \ref{auss13} and Lemma \ref{dotfamily}.
\end{proof}
\section{Invariants of cyclic modules associated to cubic circulant graphs}\label{CUBE}
All the cubic circulant graphs  has the form  $C_{2n}(a,n)$ with integers $1\leq a\leq n.$ Davis and Domke proved the following result:
\begin{Theorem}[{\cite{DD}}]\label{rr}
Let $ 1\leq a< n $ and $ t = \gcd(2n,a) $.
\begin{itemize}
	\item[(a)] If $ \frac{2n}{t} $ is even, then $ C_{2n}(a, n) $ is isomorphic to $ t $ copies of $ C_{\frac{2n}{t}}(1, \frac{n}{t})$. 
	\item[(b)] If $ \frac{2n}{t} $ is odd, then $ C_{2n}(a, n) $ is isomorphic to $ \frac{t}{2} $ copies of $ C_{\frac{4n}{t}}(2, \frac{2n}{t})$.
\end{itemize}
\end{Theorem}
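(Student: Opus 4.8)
The plan is to work in the additive group $\mathbb{Z}_{2n}$, identifying the vertex $x_{i+1}$ with $i\in\mathbb{Z}_{2n}$, so that $C_{2n}(a,n)$ becomes the Cayley graph of $\mathbb{Z}_{2n}$ with connection set $\{a,-a,n\}$ (note $-n\equiv n$, so the generator $n$ contributes a single edge and the graph is $3$-regular because $1\le a<n$). First I would analyze the subgraph formed by the $\pm a$ edges alone: these are the edges of the Cayley graph of the subgroup $\langle a\rangle=\langle t\rangle$, so they partition the vertex set into the $t$ cosets of $\langle a\rangle$, and on each coset the $\pm a$ edges trace out a single cycle of length $L:=\lvert\langle a\rangle\rvert=2n/t$. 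The whole problem then reduces to understanding how the perfect matching given by the $n$-edges sits relative to this coset partition, and the dichotomy in the theorem is precisely the dichotomy $n\in\langle a\rangle$ versus $n\notin\langle a\rangle$, which I would first record is equivalent to $t\mid n$ versus $t\nmid n$, i.e.\ to $L=2n/t$ being even versus odd. Throughout, I would use that the translations $x\mapsto x+r$ are automorphisms of the Cayley graph, so all components arising from cosets are mutually isomorphic.

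In case (a), $L$ is even iff $t\mid n$ iff $n\in\langle a\rangle$, so every $n$-edge stays inside its own coset; hence the $t$ cosets are exactly the connected components, giving $t$ isomorphic copies. To identify one component I would use the group isomorphism $\phi\colon\mathbb{Z}_L\to\langle a\rangle$, $\phi(j)=ja$. Under $\phi$ the $\pm a$ edges become the $\pm 1$ edges of $\mathbb{Z}_L$, while the $n$-edge from $ja$ to $ja+n$ becomes $\{j,j+c\}$, where $c$ satisfies $ca\equiv n\pmod{2n}$, i.e.\ $c\,(a/t)\equiv n/t\pmod L$. The key observation is that $n/t=L/2$ is the antipode of $\mathbb{Z}_L$; since $\gcd(a/t,L)=1$ the element $a/t$ is a unit modulo $L$, and because $L$ is even every unit is odd, so multiplication by $(a/t)^{-1}$ fixes $L/2$. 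Hence $c\equiv n/t\pmod L$ and each component is \emph{exactly} $C_L(1,n/t)=C_{2n/t}(1,n/t)$.

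In case (b), $L$ is odd, so $n\notin\langle a\rangle$. Since $2n\equiv 0$, the image of $n$ in the quotient $\mathbb{Z}_{2n}/\langle a\rangle\cong\mathbb{Z}_t$ is a nonzero element of order two, hence equals $t/2$ (forcing $t$ even). Thus the $n$-edges pair the $t$ cosets into $t/2$ pairs $\{\langle a\rangle+r,\ \langle a\rangle+r+n\}$, and each such pair together with the matching between the two cosets is one connected component, giving $t/2$ copies. A representative component has vertex set $\{ja\}\cup\{ja+n\}$ with $j\in\mathbb{Z}_L$: two $L$-cycles joined by the perfect matching $ja\leftrightarrow ja+n$. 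I would then write down the explicit bijection $\psi$ onto $\mathbb{Z}_{2L}$ defined by $\psi(ja)=2j$ and $\psi(ja+n)=2j+L$. A direct check shows each $\pm a$ edge maps to a $\pm 2$ edge and each matching edge maps to a difference of $L=(2L)/2$; since $L$ is odd the two cosets land on the even and odd residues respectively, so $\psi$ is a bijection carrying the component onto the Cayley graph of $\mathbb{Z}_{2L}$ with connection set $\{\pm 2,\,L\}$, namely $C_{2L}(2,L)=C_{4n/t}(2,2n/t)$.

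The main obstacle is pinning down the chord in case (a): it is tempting to argue merely that $C_L(1,c)\cong C_L(1,n/t)$ via an \'Ad\'am-type multiplier, but the clean route is the antipode observation $n/t=L/2$, which makes $c$ literally equal to $n/t$ rather than only isomorphic to it. Once that is in hand, both cases reduce to routine verifications that a proposed vertex bijection preserves all three edge types; the only real bookkeeping is tracking the correspondence between cosets, parities, and the residues of $\mathbb{Z}_{2n/t}$ or $\mathbb{Z}_{4n/t}$, together with the elementary counts $t\mid n\Leftrightarrow 2\mid(2n/t)$ and the order-two computation in $\mathbb{Z}_t$.
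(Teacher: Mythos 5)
Your proposal is correct, but note that the paper does not prove this statement at all: Theorem 4.1 is quoted verbatim from Davis and Domke \cite{DD} as a known classification, so there is no internal proof to compare against. Your argument is a complete, self-contained replacement for that citation, and the key steps all check out. The dichotomy $n\in\langle a\rangle \Leftrightarrow t\mid n \Leftrightarrow 2n/t$ even is exactly right, and it correctly forces the $n$-matching to be intra-coset in case (a) and inter-coset in case (b). In case (a), your antipode observation is the genuinely nontrivial point and you handle it cleanly: writing $c\equiv (a/t)^{-1}(L/2)\pmod L$ with $L=2n/t$, the fact that every unit modulo an even $L$ is odd gives $c\equiv L/2=n/t$ literally, so each component \emph{is} $C_{2n/t}(1,n/t)$ rather than merely \'Ad\'am-equivalent to it --- this is precisely the trap you flag, and circumventing it keeps the proof elementary (no appeal to multiplier theorems, which in general are delicate). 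In case (b), the computation that $n$ maps to the unique involution $t/2$ in $\mathbb{Z}_{2n}/\langle a\rangle\cong\mathbb{Z}_t$ (forcing $t$ even) correctly yields the pairing of cosets, and your bijection $\psi(ja)=2j$, $\psi(ja+n)=2j+L$ is well defined and surjective onto $\mathbb{Z}_{2L}$ exactly because $L$ is odd, sending the three edge types to differences $\pm 2$ and $L$, i.e., onto $C_{4n/t}(2,2n/t)$.

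One small piece of bookkeeping you use implicitly and could state: since $t=\gcd(2n,a)$ divides $a$ and $1\le a<n$, one has $t\le a<n$, hence $L=2n/t>2$; this rules out degenerate coset cycles (and shows $n/t=L/2\ne 1$ in case (a), so the matching edges never coincide with cycle edges). With that remark added, the proof is complete and, if anything, more informative than the paper's bare citation, since it exhibits the component isomorphisms explicitly via translation automorphisms of the Cayley graph.
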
 \noindent Therefore, the only connected cubic circulant graphs are those circulant graphs that are isomorphic to either  $ C_{2n}(1,n) $ for $ n\geq 2 $ or to $ C_{2n}(2, n) $ with $ n $ is odd and $n\geq 3$ (for the second circulant graph, if $n$  is not odd, then Theorem \ref{rr} implies that this circulant is not connected).   
See   Figure \ref{fig:33} for $ C_{2n}(1,n) $ and  $ C_{2n}(2, n).$ 

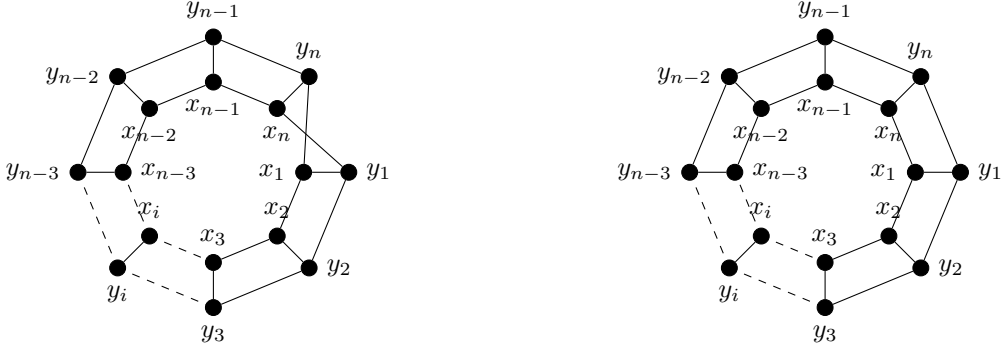
\begin{figure}[H]
	\centering
	\begin{subfigure}[b]{0.45\textwidth}
		\centering
	\[\begin{tikzpicture}[x=0.6cm, y=0.6cm]
\vertex[fill] (1) at (45:2) [label=below:${x_{n}}$]{};
\vertex[fill] (2) at (90:2) [label=below:${x_{n-1}}$]{};
\vertex[fill] (3) at (135:2) [label=below:${x_{n-2}}$]{};
\vertex[fill] (4) at (180:2) [label=right:${x_{n-3}}$]{};
\vertex[fill] (5) at (225:2) [label=above:${x_{i}}$]{};
\vertex[fill] (6) at (270:2) [label=above:${x_{3}}$]{};
\vertex[fill] (7) at (315:2) [label=above:${x_{2}}$]{};
\vertex[fill] (8) at (360:2) [label=left:${x_{1}}$]{};
\vertex[fill] (a1) at (45:3) [label=above:${y_{n}}$]{};
\vertex[fill] (a2) at (90:3) [label=above:${y_{n-1}}$]{};
\vertex[fill] (a3) at (135:3) [label=left:${y_{n-2}}$]{};
\vertex[fill] (a4) at (180:3) [label=left:${y_{n-3}}$]{};
\vertex[fill] (a5) at (225:3) [label=below:${y_{i}}$]{};
\vertex[fill] (a6) at (270:3) [label=below:${y_{3}}$]{};
\vertex[fill] (a7) at (315:3) [label=right:${y_{2}}$]{};
\vertex[fill] (a8) at (360:3) [label=right:${y_{1}}$]{};	
\draw (225:2) node {} -- (180:2) [dashed] node {};
\draw (225:3) node {} -- (270:3) [dashed] node {};
\draw (225:3) node {} -- (180:3) [dashed] node {};	
\draw (225:2) node {} -- (270:2) [dashed] node {};	
\path 
(1) edge (a8)
(7) edge (8)
(6) edge (7)
(4) edge (3)
(2) edge (3)
(1) edge (2)
(a8) edge (8)
(7) edge (a7)
(6) edge (a6)
(a4) edge (4)
(a3) edge (3)
(2) edge (a2)
(1) edge (a1)	
(a1) edge (8)
(a7) edge (a8)
(a6) edge (a7)
(a4) edge (a3)
(a2) edge (a3)
(a1) edge (a2)	
(5) edge (a5)	
;
\end{tikzpicture}\]
	\end{subfigure}
	\hfill
	\begin{subfigure}[b]{0.45\textwidth}
				\centering
	\[\begin{tikzpicture}[x=0.6cm, y=0.6cm]
\vertex[fill] (1) at (45:2) [label=below:${x_{n}}$]{};
\vertex[fill] (2) at (90:2) [label=below:${x_{n-1}}$]{};
\vertex[fill] (3) at (135:2) [label=below:${x_{n-2}}$]{};
\vertex[fill] (4) at (180:2) [label=right:${x_{n-3}}$]{};
\vertex[fill] (5) at (225:2) [label=above:${x_{i}}$]{};
\vertex[fill] (6) at (270:2) [label=above:${x_{3}}$]{};
\vertex[fill] (7) at (315:2) [label=above:${x_{2}}$]{};
\vertex[fill] (8) at (360:2) [label=left:${x_{1}}$]{};
\vertex[fill] (a1) at (45:3) [label=above:${y_{n}}$]{};
\vertex[fill] (a2) at (90:3) [label=above:${y_{n-1}}$]{};
\vertex[fill] (a3) at (135:3) [label=left:${y_{n-2}}$]{};
\vertex[fill] (a4) at (180:3) [label=left:${y_{n-3}}$]{};
\vertex[fill] (a5) at (225:3) [label=below:${y_{i}}$]{};
\vertex[fill] (a6) at (270:3) [label=below:${y_{3}}$]{};
\vertex[fill] (a7) at (315:3) [label=right:${y_{2}}$]{};
\vertex[fill] (a8) at (360:3) [label=right:${y_{1}}$]{};	
\draw (225:2) node {} -- (180:2) [dashed] node {};
\draw (225:3) node {} -- (270:3) [dashed] node {};
\draw (225:3) node {} -- (180:3) [dashed] node {};	
\draw (225:2) node {} -- (270:2) [dashed] node {};	
\path 
(1) edge (8)
(7) edge (8)
(6) edge (7)
(4) edge (3)
(2) edge (3)
(1) edge (2)
(a8) edge (8)
(7) edge (a7)
(6) edge (a6)
(a4) edge (4)
(a3) edge (3)
(2) edge (a2)
(1) edge (a1)	
(a1) edge (a8)
(a7) edge (a8)
(a6) edge (a7)
(a4) edge (a3)
(a2) edge (a3)
(a1) edge (a2)	
(5) edge (a5)	
;
\end{tikzpicture}\]
	\end{subfigure}
	\hfill
	\caption{From left to right $ C_{2n}(1,n) $ and $ C_{2n}(2,n) $.}\label{fig:33}
\end{figure}		

 \noindent  By using  Lemma \ref{LEMMA1.5} and Theorem \ref{rr},  it suffices to find the depth, projective dimension and lower bound for Stanley depth of the quotient rings of the edge ideals of $C_{2n}(1,n)$  and $C_{2n}(2,n)$ with $n$ odd. Therefore, in this section,  we first find the values of depth and projective dimension of cyclic modules $K[V(C_{2n}(1,n)]/I(C_{2n}(1,n))$  and $K[V(C_{2n}(2,n))/I(C_{2n}(2,n)).$  We give values and bounds for Stanley depth  of such modules. At the end, we compute the values of depth, projective dimension and lower bounds for Stanley depth of all cubic circulant graphs.

The following example will be helpful in understanding the  proofs of this section. 
Using Figure \ref{f3}, it is easy to see that we have the following isomorphism:
	\begin{equation*}
	   	K[V(C_{16}(1,8))]/(I(C_{16}(1,8)):x_{8}) \cong K[V(D_{5})]/I(D_5) \tensor_K K[x_8].
	\end{equation*}

 \begin{figure}[H]
			\centering
			\begin{subfigure}[b]{0.45\textwidth}
				\centering
		\[\begin{tikzpicture}[x=0.6cm, y=0.6cm]
	\vertex[fill] (1) at (45:2) [label=below:${x_{8}}$]{};
	\vertex[fill] (2) at (90:2) [label=below:${x_{7}}$]{};
	\vertex[fill] (3) at (135:2) [label=below:${x_{6}}$]{};
	\vertex[fill] (4) at (180:2) [label=right:${x_{5}}$]{};
	\vertex[fill] (5) at (225:2) [label=above:${x_{4}}$]{};
	\vertex[fill] (6) at (270:2) [label=above:${x_{3}}$]{};
	\vertex[fill] (7) at (315:2) [label=above:${x_{2}}$]{};
	\vertex[fill] (8) at (360:2) [label=left:${x_{1}}$]{};
	\vertex[fill] (a1) at (45:3) [label=above:${y_{8}}$]{};
	\vertex[fill] (a2) at (90:3) [label=above:${y_{7}}$]{};
	\vertex[fill] (a3) at (135:3) [label=left:${y_{6}}$]{};
	\vertex[fill] (a4) at (180:3) [label=left:${y_{5}}$]{};
	\vertex[fill] (a5) at (225:3) [label=below:${y_{4}}$]{};
	\vertex[fill] (a6) at (270:3) [label=below:${y_{3}}$]{};
	\vertex[fill] (a7) at (315:3) [label=right:${y_{2}}$]{};
	\vertex[fill] (a8) at (360:3) [label=right:${y_{1}}$]{};	
		
	\path 
(6) edge (5)
(a6) edge (a5)
(5) edge (4)
(a5) edge (a4)
(1) edge (a8)
(1) edge (2)
 (1) edge (a1)
	(7) edge (8)
	(6) edge (7)
	(4) edge (3)
	(2) edge (3)
	(a8) edge (8)
	(7) edge (a7)
	(6) edge (a6)
	(a4) edge (4)
	(a3) edge (3)
	(2) edge (a2)	
	(a1) edge (8)
	(a7) edge (a8)
	(a6) edge (a7)
	(a4) edge (a3)
	(a2) edge (a3)
	(a1) edge (a2)	
	(5) edge (a5)	
		;
		\end{tikzpicture}\]
				
			\end{subfigure}
			\hfill
			\begin{subfigure}[b]{0.45\textwidth}
				\centering
		 \[\begin{tikzpicture}[x=0.6cm, y=0.6cm]
\vertex[fill] (2) at (90:2) [label=below:${x_{7}}$]{};
\vertex[fill] (3) at (135:2) [label=below:${x_{6}}$]{};
\vertex[fill] (4) at (180:2) [label=right:${x_{5}}$]{};
\vertex[fill] (5) at (225:2) [label=above:${x_{4}}$]{};
\vertex[fill] (6) at (270:2) [label=above:${x_{3}}$]{};
\vertex[fill] (7) at (315:2) [label=above:${x_{2}}$]{};
\vertex[fill] (8) at (360:2) [label=left:${x_{1}}$]{};
\vertex[fill] (a1) at (45:3) [label=above:${y_{8}}$]{};
\vertex[fill] (a2) at (90:3) [label=above:${y_{7}}$]{};
\vertex[fill] (a3) at (135:3) [label=left:${y_{6}}$]{};
\vertex[fill] (a4) at (180:3) [label=left:${y_{5}}$]{};
\vertex[fill] (a5) at (225:3) [label=below:${y_{4}}$]{};
\vertex[fill] (a6) at (270:3) [label=below:${y_{3}}$]{};
\vertex[fill] (a7) at (315:3) [label=right:${y_{2}}$]{};
\vertex[fill] (a8) at (360:3) [label=right:${y_{1}}$]{};

\path 
(6) edge (5)
(a6) edge (a5)
(5) edge (4)
(a5) edge (a4)

(7) edge (8)
(6) edge (7)
(4) edge (3)
(7) edge (a7)
(6) edge (a6)
(a4) edge (4)
(a3) edge (3)
(a6) edge (a7)
(a4) edge (a3)
(a2) edge (a3)	
(5) edge (a5)	
;
		\end{tikzpicture}\]
			\end{subfigure}
			\caption{From left to right $G_{(I(C_{16}(1,8)))}$ and $G_{(I(C_{16}(1,8)): x_{8})}.$}
			\label{f3}
		\end{figure}
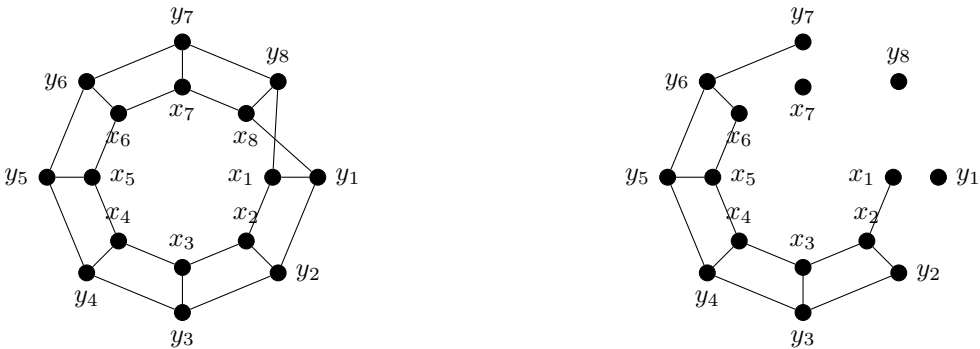 	
\begin{Proposition}\label{mobious}
    For $n\geq 2,$ let $ G=C_{2n}(1,n) $ and  $S=K[V(G)].$  Then
\begin{equation*}
				\depth{(S/I(G))}=\left\{\begin{matrix}
					\lceil\frac{n}{2}\rceil, & \quad \quad \quad \text{if}\, \, n\equiv 1\,(\mod 4);\\\\
					\lceil\frac{n-1}{2}\rceil, &    \text{otherwise}.
				\end{matrix}\right.
			\end{equation*}
\end{Proposition}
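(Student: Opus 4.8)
The plan is to exploit the fact that $G=C_{2n}(1,n)$ is the Möbius ladder: with the labelling of Figure \ref{fig:33} one has $I(G)=I(A_n)+(x_1y_n,x_ny_1)$, so $G$ is the ladder $A_n$ together with the two ``crossing'' edges $\{x_1,y_n\}$ and $\{x_n,y_1\}$. The computation rests on one structural observation, already illustrated by the example preceding the statement: deleting the closed neighbourhood $N_G[x_n]=\{x_n,x_{n-1},y_n,y_1\}$ leaves precisely the graph $D_{n-3}$, so that after renumbering
\[S/(I(G):x_n)\cong K[V(D_{n-3})]/I(D_{n-3})\tensor_K K[x_n].\]
By Lemma \ref{dotfamily} together with Lemma \ref{111} this determines $\depth(S/(I(G):x_n))=\depth(K[V(D_{n-3})]/I(D_{n-3}))+1$ in every residue class modulo $4$. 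I would argue by induction on $n$, handling the small values $n=2,3,4$ directly (these are the base cases, verifiable by hand or with CoCoA), and splitting the induction step into the four classes $n\equiv 0,1,2,3\ (\mathrm{mod}\ 4)$.

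For the lower bound I would use the short exact sequence
\[0\longrightarrow (I(G):x_n)/I(G)\xrightarrow{\ \cdot x_n\ } S/I(G)\longrightarrow S/(I(G):x_n)\longrightarrow 0\]
together with Lemma \ref{lemiso}, which decomposes the submodule $(I(G):x_n)/I(G)$ as a direct sum over the three neighbours $x_{n-1},y_n,y_1$ of $x_n$; each summand is the edge-ideal quotient of a ladder-type graph (an $A_\bullet$, $B_\bullet$, or $D_\bullet$) with a single polynomial variable adjoined, whose depth is known from Theorem \ref{The2}, Lemma \ref{The1} and Lemma \ref{dotfamily}. Taking the minimum of these depths and applying the Depth Lemma to the sequence above yields the asserted lower bound. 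In the ``generic'' classes $n\equiv 0,2,3$ it is cleaner to instead peel off $x_n$ and then $y_n$ by nested sequences: the innermost quotient is the pure ladder $A_{n-1}$ (of depth $\lceil\frac{n-1}{2}\rceil$), while the colon terms reduce to $D_{n-3}$, and the Depth Lemma gives the bound $\lceil\frac{n-1}{2}\rceil$.

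For the upper bound I would invoke Lemma \ref{Cor7}, choosing a monomial $u\notin I(G)$ for which $(I(G):u)$ splits as a known quotient tensored with a polynomial ring. When $n\equiv 1,2\ (\mathrm{mod}\ 4)$ the single variable $u=x_n$ already suffices, since the value $\depth(S/(I(G):x_n))$ computed above coincides with the target; when $n\equiv 0,3\ (\mathrm{mod}\ 4)$ I would instead colon by a degree-two squarefree monomial supported on two non-adjacent vertices, so that $(I(G):u)$ becomes a $D$-type (or smaller Möbius-ladder) quotient tensored with $K[u]$, chosen carefully so that the resulting depth equals $\lceil\frac{n-1}{2}\rceil$; the induction hypothesis feeds in here. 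Matching these upper bounds to the lower bounds closes every case. The companion Stanley-depth lower bound promised in the introduction would follow by the same machinery, replacing the Depth Lemma and Lemma \ref{Cor7} by Lemma \ref{le1} and Lemma \ref{Pro7}.

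The main obstacle is the exceptional class $n\equiv 1\ (\mathrm{mod}\ 4)$, where the answer jumps by one to $\lceil\frac{n}{2}\rceil$. A naive peeling of $x_n$ followed by $y_n$ exposes the pure-ladder quotient $A_{n-1}$, whose depth $\frac{n-1}{2}$ is one below the target, so the Depth Lemma gives only $\geq\frac{n-1}{2}$ and is too weak. The remedy---and the reason Lemma \ref{lemiso} is essential---is to bound the \emph{submodule} $(I(G):x_n)/I(G)$ directly: decomposing it via Lemma \ref{lemiso} and checking that every summand has depth at least $\frac{n+1}{2}$ forces $\depth(S/I(G))\geq\frac{n+1}{2}$ from the displayed sequence, which then matches the upper bound obtained from $u=x_n$. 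Verifying the depths of all three summands, and correctly identifying each removed-neighbourhood graph with a named family $A_\bullet,B_\bullet,C_\bullet,D_\bullet$ across the four residues, is the bookkeeping that carries the real weight of the proof.
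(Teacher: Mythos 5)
Your proposal is correct and takes essentially the same route as the paper: the paper colons at the vertex $y_1$ (equivalent to your $x_n$ by vertex-transitivity), identifies $S/(I(G):y_1)$ with a $D_{n-3}$-quotient, decomposes $(I(G):y_1)/I(G)$ via Lemma \ref{lemiso} into exactly the $D_{n-3}$-, $B_{n-3}$- and $D_{n-4}$-type summands you describe, handles $n=2,3,4$ separately, and concludes with the Depth Lemma case-by-case modulo $4$, including your key observation that in the exceptional class $n\equiv 1 \pmod{4}$ all three summands of the submodule have depth $\lceil\frac{n}{2}\rceil$. The only packaging difference is that your separate Lemma \ref{Cor7} upper-bound step with a degree-two monomial for $n\equiv 0,3 \pmod{4}$ is unnecessary: in every residue class the computed depth of the submodule $(I(G):y_1)/I(G)$ is at most that of the quotient $S/(I(G):y_1)$, so the Depth Lemma applied to the single exact sequence already forces the exact value.
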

\begin{proof}  If $n=2,$ we have $C_4(1,2)\cong \mathbb{K}_4,$ then by Lemma \ref{com} the required result follows. If $n=3,$ one can see that the required result holds by using Macaulay2 \cite{macaulay}.   If $n=4,$ we consider the following short exact sequence
\begin{equation}\label{es2qee}
0\longrightarrow (I(G):y_{1})/I(G)\xrightarrow{\cdot y_{1}} S/I(G)\longrightarrow S/(I(G):y_{1})\longrightarrow 0.
\end{equation} Here 
\begin{equation}\label{gg1eehh}
     K[V(G)]/(I(G):y_1)\cong  \frac{K[x_2,x_3,y_3,y_4]}{(x_2x_3,x_3y_3,y_3y_4)}[y_1]\cong K[V(\mathbb{P}_{4})]/I(\mathbb{P}_{4})\tensor_K K[y_{1}],
 \end{equation} and  $N_{G}(y_1)=\{x_{4},x_1,y_2\},$ $S_1=K[V(G)\backslash N_{G}(x_{4})],$ $S_2=K[V(G)\backslash (N_{G}(x_{1})\cup \{x_{4}\})],$ $S_3=K[V(G)\backslash (N_{G}(y_{2})\cup \{x_{4},x_1\})],$ $J_1=(S_1\cap I(G)),$ $J_2=(S_2\cap I(G)),$ $J_3=(S_3\cap I(G)),$ then by using Lemma \ref{lemiso}, we have  
 \begin{equation}\label{gg2eehh}
 \begin{split}
     (I(G):y_{1})/I(G) &\cong S_1/J_1[x_{4}] \oplus S_2/J_2[x_{1}] \oplus S_3/J_3[y_{2}]\\&\cong \frac{K[x_1,x_2,y_2,y_3]}{(x_1x_2,x_2y_2,y_2y_3)}[x_4] \oplus \frac{K[x_3,y_2,y_3]}{(x_3y_3,y_3y_2)}[x_1] \oplus \frac{K[x_3,y_4]}{(0)}[y_2]\\&\cong K[V(\mathbb{P}_{4})]/I(\mathbb{P}_{4})\tensor_K K[x_{4}]\oplus \big(K[V(\mathbb{P}_{3})]/I(\mathbb{P}_{3})\tensor_K K[x_{1}]\big)\oplus K[x_3,y_4,y_2].\end{split} \end{equation}  By applying Lemma \ref{111} and Lemma \ref{paath} on Equations (\ref{gg1eehh}) and  (\ref{gg2eehh}),  we get $\depth(K[V(G)]/(I(G):y_{1}))=\depth(K[V(\mathbb{P}_{4})]/I(\mathbb{P}_{4}))+\depth(K[y_1])=3$  and \begin{equation*}
         \begin{split}
             \depth((I(G):y_{1})/I(G))&=\min\{\depth(K[V(\mathbb{P}_{4})]/I(\mathbb{P}_{4}))+1,\depth(K[V(\mathbb{P}_{3})]/I(\mathbb{P}_{3}))+1,\\& \quad\quad\quad \quad\depth(K[x_3,y_4,y_2])\}=2.
         \end{split}
     \end{equation*}
 By Depth Lemma on Equation (\ref{es2qee}), we have $\depth(S/I(G))=2.$  Let $n\geq 5.$  We have the following $K$-algebra isomorphisms:
\begin{equation}\label{w3}
    S/(I(G):y_{1})\cong K[V(D_{n-3})]/I(D_{n-3})\tensor_K K[y_{1}],
\end{equation} and if $N_{G}(y_1)=\{x_{n},x_1,y_2\},$ $S_1=K[V(G)\backslash N_{G}(x_{n})],$ $S_2=K[V(G)\backslash (N_{G}(x_{1})\cup \{x_{n}\})],$ $S_3=K[V(G)\backslash (N_{G}(y_{2})\cup \{x_{n},x_1\})],$ $J_1=(S_1\cap I(G)),$ $J_2=(S_2\cap I(G)),$ $J_3=(S_3\cap I(G)),$ then by using Lemma \ref{lemiso}, we get  \begin{equation}\label{w4} \begin{split}
				        (I(G):y_{1})/I(G) &\cong S_1/J_1[x_{n}] \oplus S_2/J_2[x_{1}] \oplus S_3/J_3[y_{2}]\\& \cong  \frac{K[x_{1},\dots,x_{n-2},y_{2},\dots,y_{n-1}]}{\big(\cup^{n-3}_{i=2}\{x_{i}y_{i},x_{i}x_{i+1},y_{i}y_{i+1}\}\cup\{x_{n-2}y_{n-2},x_1x_2,y_{n-2}y_{n-1} \}\big)}[x_{n}]
\\ &\quad \oplus \frac{K[x_{3},\dots,x_{n-1},y_{2},\dots,y_{n-1}]}{\big(\cup^{n-2}_{i=3}\{x_{i}y_{i},x_{i}x_{i+1},y_{i}y_{i+1}\}\cup\{x_{n-1}y_{n-1},y_2y_3\}\big)}[x_{1}]\\ &\quad \oplus \frac{K[x_{3},\dots,x_{n-1},y_{4},\dots,y_{n}]}{\big(\cup^{n-2}_{i=4}\{x_{i}y_{i},x_{i}x_{i+1},y_{i}y_{i+1}\}\cup\{x_{n-1}y_{n-1},x_{3}x_{4},y_{n-1}y_n\}\big)}[y_{2}]\\\\&\cong  K[V(D_{n-3})]/I(D_{n-3}) \tensor_K K[x_n] \oplus K[V(B_{n-3})]/I(B_{n-3})  \tensor_K K[x_1] \\& \quad  \oplus  K[V(D_{n-4})]/I(D_{n-4}) \tensor_K K[y_2].
				    \end{split}
				\end{equation}  By using Equations (\ref{w3}),  (\ref{w4}) and Lemma \ref{111}, we have \begin{equation}\label{u12a}
    \depth{( S/(I(G):y_{1}))}=\depth{K[V(D_{n-3})]/I(D_{n-3})}+\depth{ K[y_{1}]},
\end{equation} 
\begin{equation}\label{u13a}
\begin{split}
    \depth\big((I(G):y_{1})/I(G)\big) & =\min\Big\{\depth{K[V(D_{n-3})]/I(D_{n-3})}+1, \depth{K[V(B_{n-3})]/I(B_{n-3})}+1, \\& \quad \quad \quad  \quad \quad \quad \depth{K[V(D_{n-4})]/I(D_{n-4})}+1\Big\}.
\end{split}
\end{equation} 
 Now, if $n\equiv 1\,(\mod 4),$ then $n-3\equiv 2\,(\mod 4)$  and  $n-4\equiv 1\,(\mod 4).$  By using Lemma \ref{dotfamily} in Equation (\ref{u12a}), we get
\begin{equation*}
    \depth{( S/(I(G):y_{1}))}=\depth{K[V(D_{n-3})]/I(D_{n-3})}+1=\lceil\frac{n-3+1}{2}\rceil +1=\lceil\frac{n}{2}\rceil.
\end{equation*} 
    By applying Lemma \ref{The1}, Lemma \ref{dotfamily} and Remark \ref{rem1} on Equation (\ref{u13a}), we get 
    \begin{equation*}
        \begin{split}
            \depth\big((I(G):y_{1})/I(G)\big)&=\min\Big\{\lceil\frac{n-3+1}{2}\rceil +1, \lceil\frac{n-3+1}{2}\rceil +1,\lceil\frac{n-4+1}{2}\rceil +1+1\Big\}\\ &= \min\Big\{\lceil\frac{n}{2}\rceil , \lceil\frac{n}{2}\rceil ,\lceil\frac{n+1}{2}\rceil \Big\}\\&=\lceil\frac{n}{2}\rceil.
        \end{split}
    \end{equation*} As $ \depth{( S/(I(G):y_{1}))}=\depth\big((I(G):y_{1})/I(G)\big)=\lceil\frac{n}{2}\rceil,$ therefore  by applying  Depth Lemma on Equation (\ref{es2qee}), we get $\depth(S/I(G)=\lceil\frac{n}{2}\rceil.$ 	  If $n\equiv 2\,(\mod 4),$ then $n-3\equiv 3\,(\mod 4)$  and  $n-4\equiv 2\,(\mod 4).$ To prove the result, we use  similar strategy of the previous case and get the required result that is  $\depth{(S/I(G))}=\lceil\frac{n-1}{2}\rceil.$  If    $n\equiv 0, 3\,(\mod 4),$ the proof is similar. 
\end{proof}
\begin{Corollary} 
		  For $n\geq 2,$ let $ G=C_{2n}(1,n) $ and  $S=K[V(G)].$ Then
    
\begin{equation*}
				\pdim{(S/I(G))}=\left\{\begin{matrix}
					2n-\lceil\frac{n}{2}\rceil, & \quad \quad \quad \text{if}\, \, n\equiv 1\,(\mod 4);\\\\
					2n-\lceil\frac{n-1}{2}\rceil, & \quad  \text{otherwise}.
				\end{matrix}\right.
			\end{equation*} 
		\end{Corollary}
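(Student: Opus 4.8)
The plan is to obtain the projective dimension directly from the depth already computed in Proposition \ref{mobious}, using the Auslander--Buchsbaum formula. First I would observe that the polynomial ring $S=K[V(G)]$ associated to $G=C_{2n}(1,n)$ has exactly $2n$ variables, since $V(G)=\{x_1,\dots,x_n,y_1,\dots,y_n\}$ by the labeling in Figure \ref{fig:33}; hence $\depth(S)=2n$. Because $S$ is a polynomial ring over the field $K$, it is regular, so the finitely generated module $S/I(G)$ automatically has finite projective dimension. This is precisely the hypothesis needed to invoke Lemma \ref{auss13}.

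Next I would apply the Auslander--Buchsbaum formula in the form $\pdim(S/I(G))=\depth(S)-\depth(S/I(G))=2n-\depth(S/I(G))$ and substitute the two cases of Proposition \ref{mobious}. When $n\equiv 1\,(\mod 4)$ we have $\depth(S/I(G))=\lceil\frac{n}{2}\rceil$, which yields $\pdim(S/I(G))=2n-\lceil\frac{n}{2}\rceil$; in every other residue class modulo $4$ we have $\depth(S/I(G))=\lceil\frac{n-1}{2}\rceil$, which yields $\pdim(S/I(G))=2n-\lceil\frac{n-1}{2}\rceil$. This reproduces the claimed piecewise formula exactly.

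There is essentially no genuine obstacle here: the statement is a purely formal consequence of Proposition \ref{mobious} together with Lemma \ref{auss13}. The only two points that require care are the correct count of $2n$ variables (so that $\depth(S)=2n$) and the verification that Auslander--Buchsbaum applies, and both are immediate once one notes that $S$ is regular. No fresh case-by-case graph analysis is needed, since all of the combinatorial and homological work has already been carried out in establishing the depth in Proposition \ref{mobious}.
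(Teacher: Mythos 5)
Your proof is correct and is exactly the paper's argument: the authors likewise deduce the corollary by applying the Auslander--Buchsbaum formula (Lemma \ref{auss13}) to the depth values of Proposition \ref{mobious}, with $\depth(S)=2n$. Your additional remarks on the variable count and the applicability of Auslander--Buchsbaum are sound but routine, so nothing further is needed.
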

		\begin{proof}
			The required result follows by using Lemma \ref{auss13} and Proposition \ref{mobious}.
		\end{proof}
\begin{Proposition}\label{mobiousstan}
    For $n\geq 2,$ let $ G=C_{2n}(1,n) $ and  $S=K[V(G)].$  Then

\begin{equation*}
				\sdepth{(S/I(G))}=\left\{\begin{matrix}
					\lceil\frac{n}{2}\rceil, & \quad \text{if}\, \, n\equiv 1\,(\mod 4);\\\\
					\lceil\frac{n-1}{2}\rceil, & \quad \text{if}\, \, n\equiv 2\,(\mod 4).
				\end{matrix}\right.
			\end{equation*}
     If  $ n\equiv 0,3\,(\mod 4),$ then
      $$\lceil\frac{n-1}{2}\rceil\leq \sdepth{(S/I(G))}\leq \lceil\frac{n}{2}\rceil+1.$$ 
\end{Proposition}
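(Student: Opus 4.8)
The plan is to mirror the proof of Proposition~\ref{mobious} at the level of Stanley depth, substituting Lemma~\ref{le1} for the Depth Lemma and Lemma~\ref{Pro7} for Lemma~\ref{Cor7}, and replacing each depth value of the auxiliary graphs $B_m$ and $D_m$ by the corresponding Stanley depth from Lemma~\ref{The1} and Lemma~\ref{dotfamily}. Since those lemmas assert $\sdepth=\depth$ for $B_m$ and $D_m$, \emph{all the residue arithmetic is identical to that already carried out for depth}; only the logical structure of the inequalities changes. I would dispose of the small cases first: $n=2$ gives $C_4(1,n)\cong\mathbb{K}_4$, so Lemma~\ref{com} yields $\sdepth=1=\lceil\frac{n-1}{2}\rceil$; $n=3$ is checked on Macaulay2~\cite{macaulay}; and $n=4$ is handled through the explicit isomorphisms~(\ref{gg1eehh}) and~(\ref{gg2eehh}) exactly as in the depth proof, now reading off Stanley depths of paths via Lemma~\ref{paath}.

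For the \textbf{lower bound}, take $n\geq 5$ and apply Lemma~\ref{le1} to the short exact sequence~(\ref{es2qee}), which gives $\sdepth(S/I(G))\geq\min\{\sdepth((I(G):y_1)/I(G)),\,\sdepth(S/(I(G):y_1))\}$. The quotient term is computed \emph{exactly} from~(\ref{w3}) by Lemma~\ref{111} and Lemma~\ref{dotfamily}. For the submodule term I would apply Lemma~\ref{111} to each of the three tensor summands of the decomposition~(\ref{w4}), obtaining their Stanley depths exactly, and then use that concatenating Stanley decompositions gives $\sdepth(M_1\oplus M_2\oplus M_3)\geq\min_i\sdepth(M_i)$. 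The resulting estimates coincide numerically with those in Proposition~\ref{mobious} and yield $\sdepth(S/I(G))\geq\lceil\frac{n-1}{2}\rceil$ in every residue class, improving to $\lceil\frac{n}{2}\rceil$ when $n\equiv 1\,(\mod 4)$.

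For the \textbf{upper bound}, the key point is that $y_1\notin I(G)$, so Lemma~\ref{Pro7} gives $\sdepth(S/I(G))\leq\sdepth(S/(I(G):y_1))$. By~(\ref{w3}), Lemma~\ref{111} and Lemma~\ref{dotfamily}, the right-hand side equals $\sdepth(K[V(D_{n-3})]/I(D_{n-3}))+1$, which is $\lceil\frac{n}{2}\rceil$ when $n\equiv 1$, $\lceil\frac{n-1}{2}\rceil$ when $n\equiv 2$, and $\lceil\frac{n}{2}\rceil+1$ when $n\equiv 0,3\,(\mod 4)$. Thus for $n\equiv 1,2$ the single quotient by $y_1$ already produces an upper bound matching the lower bound, pinning down the exact value; for $n\equiv 0,3$ it delivers precisely the claimed upper bound $\lceil\frac{n}{2}\rceil+1$.

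The \textbf{main obstacle} is exactly the asymmetry between depth and Stanley depth: the Depth Lemma has three parts, but its Stanley-depth counterpart (Lemma~\ref{le1}) supplies only the one-sided estimate $\sdepth(\mathcal{V})\geq\min\{\sdepth(\mathcal{U}),\sdepth(\mathcal{W})\}$. Consequently the short exact sequence~(\ref{es2qee}) can never, by itself, force equality the way it did for depth; every upper bound must be manufactured independently through Lemma~\ref{Pro7}. The exact value therefore survives only where the $y_1$-quotient upper bound meets the lower bound, namely $n\equiv 1,2\,(\mod 4)$, while for $n\equiv 0,3$ the genuine gap between $\lceil\frac{n-1}{2}\rceil$ and $\lceil\frac{n}{2}\rceil+1$ is all that the method can be expected to give.
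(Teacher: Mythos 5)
Your proposal is correct and follows essentially the same route as the paper: the paper likewise obtains the lower bound by applying Lemma \ref{le1} to the exact sequence (\ref{es2qee}), using the decomposition (\ref{w4}) together with Lemmas \ref{111}, \ref{The1} and \ref{dotfamily}, and gets the upper bound from Lemma \ref{Pro7} applied to $(I(G):y_1)$ via (\ref{w3}), so that the two bounds meet exactly when $n\equiv 1,2\ (\mod 4)$. The only cosmetic difference is at $n=3$, where the paper derives the lower bound from Lemma \ref{stan1} (using $\depth(S/I(G))=1$) rather than a machine computation, which is the safer choice since Macaulay2 does not compute Stanley depth.
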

\begin{proof}  If $n=2,$ we have $C_4(1,2)\cong \mathbb{K}_4,$ then the  result follows by Lemma \ref{com} that is $\sdepth{(S/I(G))}\geq 1.$ If $n=3,$  we find the required lower bound by using Lemma \ref{stan1}. For the upper bound, since $y_1\notin I(G),$ by Lemma \ref{Pro7}, we have $\sdepth(S/I(G))\leq \sdepth(S/(I(G):y_{1}).$  Here $$K[V(G)]/(I(G):y_1)\cong  \frac{K[x_2,y_3]}{(0)}[y_1],$$ and by Lemma \ref{111},  $\sdepth(S/I(G))\leq \sdepth(K[x_1,x_3,y_2]) =3.$ If $n=4,$  one can find a lower bound for Stanley depth in a similar way as depth in Proposition \ref{mobious} just by using  Lemma \ref{le1} in place of  Depth Lemma, that is $\sdepth(S/I(G))\geq 2.$ For the upper bound, by using Equation (\ref{gg1eehh}), Lemma \ref{Pro7} and Lemma \ref{111}, we have  $\sdepth(S/I(G))\leq \sdepth(S/(I(G):y_{1})=\sdepth(K[V(\mathbb{P}_{4})]/I(\mathbb{P}_{4}))+1 =3.$  Let $n\geq 5.$ By using a similar strategy of depth as in Proposition \ref{mobious} and applying Lemma \ref{le1} in place of Depth Lemma on Equation (\ref{es2qee}), we get the required lower bound for Stanley depth.  For other inequality, by using Lemma \ref{Pro7}, Lemma \ref{111} and Lemma \ref{dotfamily} on Equation (\ref{w3}), we get the required result. This completes the proof.
\end{proof}

 \begin{Proposition}\label{circular}
    For $n\geq 3,$  let $ G=C_{2n}(2,n) $  and $S=K[V(G)].$ Then 
\begin{equation*}
				\depth{(S/I(G))}=\left\{\begin{matrix}
					\lceil\frac{n-1}{2}\rceil, & \quad \quad \text{if}\, \, n\equiv 1\,(\mod 4);\\\\
					\lceil\frac{n}{2}\rceil, &    \text{otherwise}.
				\end{matrix}\right.
			\end{equation*}
\end{Proposition}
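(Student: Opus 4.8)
The plan is to run the argument of Proposition \ref{mobious} almost verbatim, with the single structural change that the Möbius twist of $C_{2n}(1,n)$ is gone: deleting a closed neighbourhood from the prism $C_{2n}(2,n)$ opens both $n$-cycles into paths that are attached to a ladder on the \emph{same} side, so the graph $D_m$ which controlled the previous proposition is everywhere replaced by the two-sided extension $C_m$. First I would settle the small values $n=3,4,5$ separately (the case $n=3$ by a direct Macaulay 2 \cite{macaulay} computation, and $n=4,5$ by the short exact sequences below together with the degenerate-index conventions of Remark \ref{rem1}), so that in the generic step every auxiliary graph $B_{n-3},C_{n-3},C_{n-4}$ has index at least $2$ and Lemmas \ref{The1} and \ref{diamond} apply without adjustment.

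For the generic step, with $S=K[V(G)]$, I would use the short exact sequence
\begin{equation*}
0\longrightarrow (I(G):y_{1})/I(G)\xrightarrow{\cdot y_{1}} S/I(G)\longrightarrow S/(I(G):y_{1})\longrightarrow 0 .
\end{equation*}
Here $N_G(y_1)=\{x_1,y_2,y_n\}$, and deleting $N[y_1]$ leaves the ladder on $x_3,\dots,x_{n-1}$ and $y_3,\dots,y_{n-1}$ with the two extra rungs $x_2x_3$ and $x_{n-1}x_n$, which after renumbering is $C_{n-3}$; hence
\begin{equation*}
S/(I(G):y_{1})\cong K[V(C_{n-3})]/I(C_{n-3})\tensor_K K[y_1].
\end{equation*}
Applying Lemma \ref{lemiso} to $y_1$ with its neighbours ordered as $x_1,y_2,y_n$ and identifying each induced subgraph by the same delete-and-renumber bookkeeping, I expect the submodule to split as
\begin{equation*}
(I(G):y_{1})/I(G)\cong \big(K[V(C_{n-3})]/I(C_{n-3})\big)\tensor_K K[x_1]\oplus \big(K[V(B_{n-3})]/I(B_{n-3})\big)\tensor_K K[y_2]\oplus \big(K[V(C_{n-4})]/I(C_{n-4})\big)\tensor_K K[y_n].
\end{equation*}
Verifying these two isomorphisms --- in particular that the leftover vertices really assemble into $C$- and $B$-type graphs of the asserted indices --- is the step that carries all the geometric content, and is where I expect the bookkeeping to be most error-prone.

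Once the summands are identified the rest is arithmetic. Lemma \ref{111} contributes the extra free variables, and Lemmas \ref{The1} and \ref{diamond} supply $\depth(K[V(B_m)]/I(B_m))=\lceil\frac{m+1}{2}\rceil$ and the four-branch value of $\depth(K[V(C_m)]/I(C_m))$, so I would separate the residue classes of $n$ modulo $4$ (the branch of Lemma \ref{diamond} forces this). In each class I would compute $\depth\big(S/(I(G):y_1)\big)=\depth(K[V(C_{n-3})]/I(C_{n-3}))+1$ and $\depth\big((I(G):y_1)/I(G)\big)$ as the minimum of the three summand depths, and then invoke the Depth Lemma. The genuine obstacle is getting an \emph{equality} rather than a bound. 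When the two outer terms have unequal depth --- which is what occurs for $n\equiv 1\,(\mod 4)$, where the submodule is the strictly smaller one --- parts (1) and (2) of the Depth Lemma already force $\depth(S/I(G))$ to equal the smaller value $\lceil\frac{n-1}{2}\rceil$. When instead the two outer depths coincide, which is the case $n\equiv 3\,(\mod 4)$, the Depth Lemma yields only the lower bound $\lceil\frac{n}{2}\rceil$, and I would obtain the matching upper bound from Lemma \ref{Cor7} applied to $u=y_1$, since $\depth\big(S/(I(G):y_1)\big)$ equals $\lceil\frac{n}{2}\rceil$ in that class. Collecting the surviving values gives exactly the stated formula.
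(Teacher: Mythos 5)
Your proposal is correct and follows essentially the same route as the paper's own proof of Proposition \ref{circular}: the same short exact sequence obtained by coloning out a single outer-cycle vertex (your $y_1$ is the paper's $y_n$ up to the circulant symmetry), the same Lemma \ref{lemiso} splitting of $(I(G):y)/I(G)$ into $C_{n-3}$-, $B_{n-3}$- and $C_{n-4}$-type summands with one free variable each, the same identification $S/(I(G):y)\cong K[V(C_{n-3})]/I(C_{n-3})\tensor_K K[y]$, and the same residue-class arithmetic via Lemmas \ref{111}, \ref{The1} and \ref{diamond}. Your endgame also matches the paper's (with $n\equiv 2\,(\mod 4)$ falling into your ``unequal'' mechanism and $n\equiv 0\,(\mod 4)$ into the ``equal'' one), and you correctly make explicit the point the paper buries in ``the proof follows in a similar way'': when the two outer depths coincide, the Depth Lemma gives only the lower bound and the matching upper bound must come from Lemma \ref{Cor7} applied to the colon module.
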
 \begin{proof}  If $n=3,$ one can easily see that the result holds by  Macaulay2 \cite{macaulay}. If $n=4,$ we  consider the following short exact sequence
\begin{equation}\label{es2qeer}
0\longrightarrow (I(G):y_{4})/I(G)\xrightarrow{\cdot y_{4}} S/I(G)\longrightarrow S/(I(G):y_{4})\longrightarrow 0. 
\end{equation}
We have 
\begin{equation}\label{gg1eer}
     K[V(G)]/(I(G):y_4)\cong  \frac{K[x_1,x_2,x_3,y_2]}{(x_1x_2,x_2x_3,x_2y_2)}[y_4]\cong K[V(\mathbb{S}_4)]/I(\mathbb{S}_4)\tensor_K K[y_4],
 \end{equation} and if we have   $N_{G}(y_4)=\{y_{3},x_4,y_1\},$ $S_1=K[V(G)\backslash N_{G}(y_{3})],$ $S_2=K[V(G)\backslash (N_{G}(x_{4})\cup \{y_{3}\})],$ $S_3=K[V(G)\backslash (N_{G}(y_{1})\cup \{y_3, x_{4}\})],$ $J_1=(S_1\cap I(G)),$ $J_2=(S_2\cap I(G)),$ $J_3=(S_3\cap I(G)),$ then by using Lemma \ref{lemiso}, we get 
 \begin{equation}\label{gg2eer}\begin{split}
      (I(G):y_{4})/I(G)  &\cong S_1/J_1[y_{3}] \oplus S_2/J_2[x_{4}] \oplus S_3/J_3[y_{1}]\\&\cong \frac{K[x_1,x_2,x_4,y_1]}{(x_1x_2,x_1x_4,x_1y_1)} [y_3] \oplus \frac{K[x_2,y_1,y_2]}{(y_1y_2,y_2x_2)}[x_4] \oplus \frac{K[x_2,x_3]}{(x_2x_3)} [y_1]\\& \cong   K[V(\mathbb{S}_4)]/I(\mathbb{S}_4)\tensor_K K[y_3] \oplus \big(K[V(\mathbb{P}_3)]/I(\mathbb{P}_3)\tensor_K K[x_4]\big)\\&  \quad \oplus  K[V(\mathbb{P}_2)]/I(\mathbb{P}_2)\tensor_K K[y_1 ].
 \end{split}\end{equation}  We apply Lemma \ref{111}, Lemma \ref{paath} and Lemma \ref{leAli} on Equation (\ref{gg1eer}), we get $\depth(K[V(G)]/(I(G):y_{4}))=\depth( K[V(\mathbb{S}_4)]/I(\mathbb{S}_4))+1=2$  and by Equation (\ref{gg2eer})
 \begin{equation*}\begin{split}
       \depth((I(G):y_{4})/I(G))&=\min\{\depth(K[V(\mathbb{S}_4)]/I(\mathbb{S}_4))+1,\depth(K[V(\mathbb{P}_3)]/I(\mathbb{P}_3))+1, \\& \quad\quad \quad\quad \depth(K[V(\mathbb{P}_2)]/I(\mathbb{P}_2))+1\}\\&=2.
 \end{split}\end{equation*}
 By using Depth Lemma on Equation (\ref{es2qeer}), $\depth(S/I(G))=2.$ Let $n\geq 5.$ Consider the following  short exact sequence
\begin{equation}\label{u11r}
0\longrightarrow (I(G):y_{n})/I(G)\xrightarrow{\cdot y_{n}} S/I(G)\longrightarrow S/(I(G):y_{n})\longrightarrow 0.
\end{equation}   Here
\begin{equation}\label{j22r}
    S/(I(G):y_{n})\cong K[V(C_{n-3})]/I(C_{n-3})\tensor_K K[y_{n}], 
\end{equation} and  if   $N_{G}(y_n)=\{y_{n-1},x_n,y_1\},$ $S_1=K[V(G)\backslash N_{G}(y_{n-1})],$ $S_2=K[V(G)\backslash (N_{G}(x_{n})\cup \{y_{n-1}\})],$ $S_3=K[V(G)\backslash (N_{G}(y_{1})\cup \{y_{n-1}, x_{n}\})],$ $J_1=(S_1\cap I(G)),$ $J_2=(S_2\cap I(G)),$ $J_3=(S_3\cap I(G)),$ then by using Lemma \ref{lemiso}, we have   \begin{equation} \begin{split}\label{nnnr}
				        (I(G):y_{n})/I(G)  &\cong S_1/J_1[y_{n-1}] \oplus S_2/J_2[x_{n}] \oplus S_3/J_3[y_{1}]\\& \cong  \frac{K[x_{1},\dots,x_{n-2},x_n,y_{1},\dots,y_{n-3}]}{\big(\cup^{n-4}_{i=1}\{x_{i}y_{i},x_{i}x_{i+1},y_{i}y_{i+1}\}\cup\{x_{n-3}y_{n-3},x_1x_n,x_{n-3}x_{n-2}\} \big)}[y_{n-1}]
\\ &\quad \oplus \frac{K[x_{2},\dots,x_{n-2},y_{1},\dots,y_{n-2}]}{\big(\cup^{n-3}_{i=2}\{x_{i}y_{i},x_{i}x_{i+1},y_{i}y_{i+1}\}\cup\{x_{n-2}y_{n-2},y_1y_2\}\big)}[x_{n}]\\ &\quad \oplus \frac{K[x_{2},\dots,x_{n-1},y_{3},\dots,y_{n-2}]}{\big(\cup^{n-3}_{i=2}\{x_{i}y_{i},x_{i}x_{i+1},y_{i}y_{i+1}\}\cup\{x_{n-2}y_{n-2},x_{n-2}x_{n-1},x_2x_3\}\big)}[y_{1}]\\\\&\cong K[V(C_{n-3})]/I(C_{n-3})  \tensor_K K[y_{n-1}] \oplus K[V(B_{n-3})]/I(B_{n-3})  \tensor_K K[x_n]\\& \quad \oplus K[V(C_{n-4})]/I(C_{n-4})  \tensor_K K[y_1]
				    \end{split}
				\end{equation}   By Equations (\ref{nnnr}),  (\ref{j22r})  and Lemma \ref{111}, we get \begin{equation}\label{u12}
    \depth{( S/(I(G):y_{n}))}=\depth{K[V(C_{n-3})]/I(C_{n-3})}+\depth{ K[y_{n}]},
\end{equation} and 

\begin{equation}\label{u13}
\begin{split}
    \depth\big((I(G):y_{n})/I(G)\big)&=\min\Big\{\depth{K[V(C_{n-3})]/I(C_{n-3})}+1, \depth{K[V(B_{n-3})]/I(B_{n-3})}+1, \\ & \quad \quad \quad \quad\quad \quad\depth{K[V(C_{n-4})]/I(C_{n-4})}+1\Big\}.
\end{split}
\end{equation} If $n\equiv 1\,(\mod 4),$ then $n-3\equiv 2\,(\mod 4)$  and  $n-4\equiv 1\,(\mod 4).$  By applying Lemma \ref{diamond}, Lemma \ref{111} on Equation (\ref{u12}), we get
\begin{equation}
    \depth{( S/(I(G):y_{n}))}=\lceil\frac{n-3+1}{2}\rceil +1+1=\lceil\frac{n}{2}\rceil+1.
\end{equation} 
    By applying Lemma \ref{The1}, Lemma \ref{diamond} and Remark \ref{rem1}  on Equation (\ref{u13}), we get 
    \begin{equation}
        \begin{split}
            \depth\big((I(G):y_{n})/I(G)\big)&=\min\Big\{\lceil\frac{n-3+1}{2}\rceil +1+1, \lceil\frac{n-3+1}{2}\rceil +1,\lceil\frac{n-4+1}{2}\rceil +1\Big\}\\ &= \min\Big\{\lceil\frac{n}{2}\rceil +1, \lceil\frac{n}{2}\rceil ,\lceil\frac{n-1}{2}\rceil \Big\}\\&=\lceil\frac{n-1}{2}\rceil.
        \end{split}
    \end{equation} Since, $\depth\big((I(G):y_{n})/I(G)\big)< \depth{( S/(I(G):y_{n}))},$  thus the required result follows by applying Depth Lemma on Equation (\ref{u11r}). Similarly, if   $n\equiv 2\,(\mod 4),$ then $n-3\equiv 3\,(\mod 4)$  and  $n-4\equiv 2\,(\mod 4),$ then the required results follows by similar strategy that is $\depth{(S/I(G))}=\lceil\frac{n}{2}\rceil.$
    If  $ n\equiv 0,3\,(\mod 4),$  the proof follows in a similar way. 
\end{proof} 
\begin{Corollary} For $n\geq 3,$  let $ G=C_{2n}(2,n) $  and $S=K[V(G)].$ Then

\begin{equation*}
				\pdim{(S/I(G))}=\left\{\begin{matrix}
				2n	-\lceil\frac{n-1}{2}\rceil, & \quad \quad\text{if}\, \, n\equiv 1\,(\mod 4);\\\\
				2n-	\lceil\frac{n}{2}\rceil, &    \text{otherwise}.
				\end{matrix}\right.
			\end{equation*}
		\end{Corollary}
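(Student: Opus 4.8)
The plan is to derive this projective dimension formula directly from the depth computation in Proposition \ref{circular} by invoking the graded Auslander--Buchsbaum formula (Lemma \ref{auss13}). First I would record that the vertex set of $G = C_{2n}(2,n)$ consists of the $2n$ vertices $x_1,\dots,x_n,y_1,\dots,y_n$, so that $S = K[V(G)]$ is a polynomial ring in exactly $2n$ variables and hence $\depth(S) = 2n$. Since $S/I(G)$ is a nonzero finitely generated graded $S$-module, it has finite projective dimension, so Lemma \ref{auss13} applies and gives
\begin{equation*}
\pdim(S/I(G)) = \depth(S) - \depth(S/I(G)) = 2n - \depth(S/I(G)).
\end{equation*}

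Then I would substitute the two branches of Proposition \ref{circular}. When $n \equiv 1\,(\mod 4)$ we have $\depth(S/I(G)) = \lceil\frac{n-1}{2}\rceil$, which yields $\pdim(S/I(G)) = 2n - \lceil\frac{n-1}{2}\rceil$; in every other residue class $\depth(S/I(G)) = \lceil\frac{n}{2}\rceil$, which yields $\pdim(S/I(G)) = 2n - \lceil\frac{n}{2}\rceil$. These are precisely the two cases in the statement, so the proof is complete. I do not expect any genuine obstacle here: this is a routine consequence of the depth result, and the only small points worth confirming are that the variable count is $2n$ rather than $n$, and that the graded version of Auslander--Buchsbaum may legitimately be used for a graded module over the (non-local) polynomial ring $S$. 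The latter is standard and is exactly the mechanism already employed in the corollaries following Lemma \ref{The1} and Lemma \ref{diamond}, so the argument mirrors those verbatim.
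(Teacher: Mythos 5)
Your proposal is correct and matches the paper's proof exactly: the paper likewise deduces the formula by combining the Auslander--Buchsbaum formula (Lemma \ref{auss13}) with the depth values from Proposition \ref{circular}, noting that $\depth(S)=2n$. Your additional remarks on the variable count and the applicability of the graded Auslander--Buchsbaum formula are accurate and consistent with how the paper handles its other corollaries.
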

		\begin{proof}
			The proof follows by  Lemma \ref{auss13} and Proposition  \ref{circular}.
		\end{proof}
  \begin{Proposition}\label{circularstan}
    For $n\geq 3,$  let $ G=C_{2n}(2,n) $  and $S=K[V(G)].$  If  $ n\equiv 0,3\,(\mod 4),$ then $$\sdepth{(S/I(G))}  =\lceil\frac{n}{2}\rceil.$$ If $n\equiv 1\,(\mod 4),$ then $$ \lceil\frac{n-1}{2}\rceil \leq \sdepth{(S/I(G))} \leq \lceil\frac{n}{2}\rceil+1,$$ and if $n\equiv 2\,(\mod 4),$ we have $$ \lceil\frac{n-1}{2}\rceil \leq \sdepth{(S/I(G))} \leq \lceil\frac{n-1}{2}\rceil+1.$$ 
\end{Proposition}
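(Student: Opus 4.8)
The plan is to obtain the lower bounds exactly as in the depth computation of Proposition \ref{circular}, and then to supply the matching (or one-larger) upper bounds by a separate colon argument, since for Stanley depth a short exact sequence only yields a one-sided inequality. For the lower bound I would reuse the short exact sequence (\ref{u11r}) together with the isomorphisms (\ref{j22r}) and (\ref{nnnr}), but apply Lemma \ref{le1} in place of the Depth Lemma and Lemma \ref{LEMMA1.5} in place of the additivity of depth over tensor products. The crucial point is that for every module occurring at the two ends of (\ref{u11r}) the Stanley depth and depth coincide: by Lemma \ref{The1} this holds for the $B_m$-modules, by Lemma \ref{diamond} it holds for the $C_m$-modules, and Lemma \ref{111} handles the polynomial factors $K[y_n]$, $K[x_n]$, $K[y_1]$. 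Hence $\sdepth(S/(I(G):y_n))=\depth(S/(I(G):y_n))$, and, since the Stanley depth of a direct sum dominates the minimum of the Stanley depths of its summands (Lemma \ref{le1} applied to the split sequence), Lemma \ref{LEMMA1.5} gives $\sdepth\big((I(G):y_n)/I(G)\big)\ge\depth\big((I(G):y_n)/I(G)\big)$. Feeding these into Lemma \ref{le1} and using the monotonicity of $\min$ yields $\sdepth(S/I(G))\ge\depth(S/I(G))$, so the lower bounds reproduce the depth values of Proposition \ref{circular}: $\lceil\frac{n}{2}\rceil$ when $n\equiv 0,3\,(\mod 4)$ and $\lceil\frac{n-1}{2}\rceil$ when $n\equiv 1,2\,(\mod 4)$ (recall $\lceil\frac{n-1}{2}\rceil=\lceil\frac{n}{2}\rceil$ for even $n$).

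For the upper bound I would invoke Lemma \ref{Pro7}: since $y_n\notin I(G)$, we have $\sdepth(S/I(G))\le\sdepth(S/(I(G):y_n))$, and by (\ref{j22r}) together with Lemma \ref{111} this equals $\sdepth\big(K[V(C_{n-3})]/I(C_{n-3})\big)+1$. Evaluating this via Lemma \ref{diamond} in each residue class of $n$ modulo $4$ gives precisely the claimed upper bounds: $\lceil\frac{n}{2}\rceil$ for $n\equiv 0,3$, $\lceil\frac{n}{2}\rceil+1$ for $n\equiv 1$, and $\lceil\frac{n-1}{2}\rceil+1$ for $n\equiv 2$. In the two cases $n\equiv 0,3\,(\mod 4)$ this upper bound coincides with the lower bound, producing the stated exact value $\lceil\frac{n}{2}\rceil$; in the cases $n\equiv 1,2$ it sits one (respectively) above the lower estimate, which is why only a range can be asserted. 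The small cases $n=3,4$ are handled directly as in Proposition \ref{circular} and Proposition \ref{mobiousstan}: for $n=3$ the lower bound comes from Lemma \ref{stan1} and the upper bound from Lemma \ref{Pro7}, while for $n=4$ one runs the argument on (\ref{es2qeer})--(\ref{gg2eer}) with Lemma \ref{le1} replacing the Depth Lemma.

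The main obstacle is precisely the asymmetry between depth and Stanley depth. In the depth proof the exact value emerges automatically from the short exact sequence because the Depth Lemma controls the middle term from both sides; for Stanley depth only the lower estimate of Lemma \ref{le1} is available, so the exact value survives exactly when the independent colon upper bound from Lemma \ref{Pro7} happens to meet the lower bound. I expect the only delicate part to be the bookkeeping of the residues of $n-3$ and $n-4$ modulo $4$ and verifying that $\sdepth\big(K[V(C_{n-3})]/I(C_{n-3})\big)+1$ lands on the correct ceiling in each class; I do not expect these tools to close the gap for $n\equiv 1,2\,(\mod 4)$, which is consistent with the ranges recorded in the statement.
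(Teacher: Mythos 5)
Your argument for $n\geq 4$ coincides with the paper's proof: the lower bounds come from the short exact sequence (\ref{u11r}) with Lemma \ref{le1} replacing the Depth Lemma, using the decomposition (\ref{nnnr}) from Lemma \ref{lemiso} together with Lemma \ref{The1}, Lemma \ref{diamond}, Lemma \ref{111} and Lemma \ref{LEMMA1.5} (and the fact that the Stanley depth of a direct sum dominates the minimum over the summands), while the upper bounds come from Lemma \ref{Pro7} applied to the colon $(I(G):y_n)$ via (\ref{j22r}); your residue-class bookkeeping for $\sdepth\big(K[V(C_{n-3})]/I(C_{n-3})\big)+1$ is correct in all four classes and matches the stated upper bounds.

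There is, however, one genuine gap: the base case $n=3$. Since $3\equiv 3\,(\mathrm{mod}\,4)$, the statement asserts the \emph{exact} value $\sdepth(S/I(G))=\lceil\frac{3}{2}\rceil=2$, so you need the lower bound $\sdepth(S/I(G))\geq 2$. Lemma \ref{stan1} cannot deliver this: it only relates the vanishing of depth and Stanley depth, so from $\depth(S/I(G))=2>0$ it yields merely $\sdepth(S/I(G))\geq 1$. (In Proposition \ref{mobiousstan} the appeal to Lemma \ref{stan1} at $n=3$ works precisely because there the claimed lower bound is $\lceil\frac{n-1}{2}\rceil=1$.) Nor can you fall back on the general machinery at $n=3$: the reduction behind (\ref{j22r}) and (\ref{nnnr}) would involve $C_{0}$, $C_{-1}$ and $B_{0}$, and the conventions of Remark \ref{rem1} do not cover the degenerate pieces needed for the direct-sum decomposition, which is why the paper instead verifies $\sdepth(S/I(C_6(2,3)))\geq 2$ by a direct computation in CoCoA. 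So your proof is complete once you replace the Lemma \ref{stan1} citation at $n=3$ by an explicit Stanley decomposition of $S/I(C_6(2,3))$ of Stanley depth $2$ (or a computer check, as in the paper); your upper bound $\sdepth(S/I(G))\leq \sdepth(S/(I(G):y_3))=2$ for $n=3$ is fine.
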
 \begin{proof}  If $n=3,$ one can get lower bound  by using CoCoA \cite{cocoa} that is $\sdepth(S/I(G))\geq 2.$  For the upper bound, by Lemma \ref{Pro7}, we have $\sdepth(S/I(G))\leq \sdepth(S/(I(G):y_{3}).$ Here
\begin{equation}\label{gg1eerr}
     K[V(G)]/(I(G):y_3)\cong  \frac{K[x_1,x_2]}{(x_1x_2)}[y_3],
 \end{equation}  by Lemma \ref{111} and Lemma \ref{paath}, we get $\sdepth(S/I(G))\leq 2.$ Let $n=4.$  For the upper bound, since $y_4\notin I(G),$ by Lemma \ref{Pro7}, we have $\sdepth(S/I(G))\leq \sdepth(S/(I(G):y_{4}).$  Here $$K[V(G)]/(I(G):y_4)\cong  \frac{K[x_1,x_2,x_3,y_2]}{(x_1x_2,x_2y_2,x_2x_3)}[y_4]\cong K[V(\mathbb{S}_4)]/I(\mathbb{S}_4)\tensor_K K[y_4],$$ and by Lemma \ref{111} and Lemma \ref{leAli},  $\sdepth(S/I(G))\leq \sdepth(K[V(\mathbb{S}_4)]/I(\mathbb{S}_4))+1 =2.$ For other inequality, one can find Stanley depth in a similar way as depth in Proposition \ref{circular} just by using  Lemma \ref{le1} in place of  Depth Lemma, that is $\sdepth(S/I(G))\geq 2.$  Let $n\geq 5.$  We get the required lower bound for Stanley depth by using a similar strategy of depth as in Proposition \ref{circular} and applying Lemma \ref{le1} in-place of Depth Lemma on Equation (\ref{u11r}).   For other inequality, by using Lemma \ref{Pro7}, Lemma \ref{111} and Lemma \ref{diamond}  on Equation (\ref{j22r}), the required result follows.\end{proof}
\noindent Before proving Theorem \ref{cy1},  we make the following remark.
\begin{Remark}\label{ww}
  \em{Let  $n\geq 2,$ $ t = \gcd(2n,a) $ and  $ 1\leq a< n .$  Note that by Theorem \ref{rr}(b),  $C_{2n}(a,n)$ is isomorphic to $\frac{t}{2}$ copies of $C_{\frac{4n}{t}}(2,\frac{2n}{t}).$  For $C_{\frac{4n}{t}}(2, \frac{2n}{t}),$ we only need to consider the case when $\frac{2n}{t}$ is odd. If $\frac{2n}{t}$ is even, then by Theorem \ref{rr}}(a), we have $t$ disjoint copies of $C_{\frac{2n}{t}}(1, \frac{n}{t}).$
\end{Remark}
\begin{Theorem}\label{cy1}
 Let $n\geq 2,$ $ t = \gcd(2n,a) $ and  $ 1\leq a< n .$ \begin{itemize}
	\item[(a)] If $ \frac{2n}{t} $ is even, then 

\begin{equation*}
				\depth{(K[V(C_{2n}(a,n))]/I(C_{2n}(a,n)))}=\left\{\begin{matrix}
				\lceil\frac{n}{2t}\rceil \cdot t	, & \quad\quad \text{if}\, \, \frac{n}{t}\equiv 1\,(\mod 4) ;\\\\
					\lceil\frac{n-t}{2t}\rceil \cdot t, &  \text{otherwise}.
				\end{matrix}\right.
			\end{equation*}
 \item[(b)] If $ \frac{2n}{t} $ is odd, then 

\begin{equation*}
				\depth{(K[V(C_{2n}(a,n))]/I(C_{2n}(a,n)))}=\left\{\begin{matrix}
					\lceil\frac{2n-t}{2t}\rceil \cdot \frac{t}{2}, & \quad \text{if}\, \,\frac{2n}{t}\equiv 1\,(\mod 4) ;\\\\
					\lceil\frac{n}{t}\rceil \cdot \frac{t}{2}, & \quad \text{if}\, \,\frac{2n}{t}\equiv 3\,(\mod 4).
				\end{matrix}\right.
			\end{equation*}
 \end{itemize}
\end{Theorem}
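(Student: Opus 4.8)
The plan is to reduce the computation to the two base cases already settled in Proposition \ref{mobious} and Proposition \ref{circular}, by combining the structural decomposition of Davis and Domke (Theorem \ref{rr}) with the additivity of depth over disjoint unions furnished by Lemma \ref{LEMMA1.5}. The first step is to record the general principle that when a graph is the disjoint union of finitely many graphs on pairwise disjoint vertex sets, its edge ideal is the sum of the edge ideals of the components, each supported on its own block of variables; consequently the associated quotient ring is the $K$-tensor product of the quotient rings of the components. Iterating Lemma \ref{LEMMA1.5} then shows that the depth of the whole equals the sum of the depths of the components, which for identical copies is simply the number of copies multiplied by the depth of a single copy.

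For part (a), where $\frac{2n}{t}$ is even, Theorem \ref{rr}(a) presents $C_{2n}(a,n)$ as $t$ disjoint copies of $C_{2m}(1,m)$ with $m:=\frac{n}{t}$. I would first check that $m\geq 2$, which holds because $t=\gcd(2n,a)\leq a<n$ forces $\frac{n}{t}>1$, so that Proposition \ref{mobious} is applicable. Substituting its value and multiplying by $t$ yields $t\lceil\frac{m}{2}\rceil=t\lceil\frac{n}{2t}\rceil$ when $m\equiv 1\,(\mod 4)$ and $t\lceil\frac{m-1}{2}\rceil=t\lceil\frac{n-t}{2t}\rceil$ otherwise, which matches the claim once one rewrites $\lceil\frac{(n/t)-1}{2}\rceil=\lceil\frac{n-t}{2t}\rceil$.

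For part (b), where $\frac{2n}{t}$ is odd, Theorem \ref{rr}(b) presents the graph as $\frac{t}{2}$ disjoint copies of $C_{2m}(2,m)$ with $m:=\frac{2n}{t}$; here $t$ is necessarily even, since $2n$ is even while $\frac{2n}{t}$ is odd, and $m$ is odd with $m\geq 3$, so Proposition \ref{circular} applies. Multiplying its value by $\frac{t}{2}$ gives $\frac{t}{2}\lceil\frac{m-1}{2}\rceil=\frac{t}{2}\lceil\frac{2n-t}{2t}\rceil$ when $m\equiv 1\,(\mod 4)$ and $\frac{t}{2}\lceil\frac{m}{2}\rceil=\frac{t}{2}\lceil\frac{n}{t}\rceil$ when $m\equiv 3\,(\mod 4)$, again matching the stated formula.

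The result is in essence a corollary of the earlier propositions, so I do not expect any serious obstacle beyond careful bookkeeping. The only points demanding attention are verifying that the reduced parameter $m$ lands in the admissible range of the invoked proposition (that is, $m\geq 2$ in (a) and $m\geq 3$ odd in (b)), confirming that $t$ is even in case (b) so that $\frac{t}{2}$ is an integer, and performing the elementary simplification of the nested ceiling expressions such as $\lceil\frac{(n/t)-1}{2}\rceil=\lceil\frac{n-t}{2t}\rceil$ and $\lceil\frac{(2n/t)-1}{2}\rceil=\lceil\frac{2n-t}{2t}\rceil$.
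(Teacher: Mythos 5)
Your proposal is correct and follows essentially the same route as the paper's proof: the Davis--Domke decomposition (Theorem \ref{rr}) reduces $C_{2n}(a,n)$ to $t$ copies of $C_{\frac{2n}{t}}(1,\frac{n}{t})$ or $\frac{t}{2}$ copies of $C_{\frac{4n}{t}}(2,\frac{2n}{t})$, and iterating Lemma \ref{LEMMA1.5} makes depth additive over the copies, so the values follow from Propositions \ref{mobious} and \ref{circular} after the same ceiling simplifications. Your explicit verifications that $\frac{n}{t}\geq 2$ in case (a), that $t$ is even and $\frac{2n}{t}\geq 3$ is odd in case (b), are slightly more careful than the paper's brief remarks but amount to the same bookkeeping.
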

\begin{proof}
Let  $ \frac{2n}{t} $ is even. Since $t = \gcd(2n,a),$ therefore $ \frac{n}{t}\geq 2$ and a positive integer. Now by using Proposition \ref{mobious}, we have 

\begin{equation*}
				\depth{\big(K[V(C_{\frac{2n}{t}}(1,\frac{n}{t}))]/I(C_{\frac{2n}{t}}(1,\frac{n}{t}))\big)}=\left\{\begin{matrix}
				\big\lceil\frac{\frac{n}{t}}{2}\big\rceil 	, & \quad \quad \text{if}\, \, \frac{n}{t}\equiv 1\,(\mod 4) ;\\\\
					\big\lceil\frac{\frac{n}{t}-1}{2}\big\rceil , &  \text{otherwise}.
				\end{matrix}\right.
			\end{equation*} By Theorem \ref{rr}, $C_{2n}(a,n)$ is isomorphic to $t$ copies of $C_{\frac{2n}{t}}(1,\frac{n}{t}).$ Therefore, by  Lemma \ref{LEMMA1.5}, 

\begin{equation*}
				\depth{(K[V(C_{2n}(a,n))]/I(C_{2n}(a,n)))}=\left\{\begin{matrix}
				\big\lceil\frac{n}{2t}\big\rceil \cdot  t	, &\quad \quad \text{if}\, \, \frac{n}{t}\equiv 1\,(\mod 4) ;\\\\
					\big\lceil\frac{\frac{n}{t}-1}{2}\big\rceil \cdot t, & \text{otherwise}.
				\end{matrix}\right.
			\end{equation*} Now, if  $ \frac{2n}{t} $ is odd, then $ \frac{2n}{t}> 2$ and a positive integer. By using a similar strategy, use Proposition  \ref{circular} in place of  Proposition \ref{mobious} and by Theorem \ref{rr},  $C_{2n}(a,n)$ is isomorphic to $\frac{t}{2}$ copies of $C_{\frac{4n}{t}}(2,\frac{2n}{t}).$ By Remark \ref{ww}, it is enough to consider the cases when  $\frac{2n}{t}$ is odd, therefore by  Lemma \ref{LEMMA1.5},  the required result follows 
\begin{equation*}
				\depth{(K[V(C_{2n}(a,n))]/I(C_{2n}(a,n)))}=\left\{\begin{matrix}
					\lceil\frac{\frac{2n}{t}-1}{2}\rceil \cdot \frac{t}{2}, & \quad \quad \text{if}\, \, \frac{2n}{t}\equiv 1\,(\mod 4) ;\\\\
					\lceil\frac{\frac{2n}{t}}{2}\rceil \cdot \frac{t}{2}, & \quad \quad \text{if}\, \, \frac{2n}{t}\equiv 3\,(\mod 4).
				\end{matrix}\right.
			\end{equation*}
\end{proof}
\begin{Corollary}\label{cy2}
 Let $n\geq 2,$ $ t = \gcd(2n,a) $ and  $ 1\leq a< n .$ \begin{itemize}
	\item[(a)] If $ \frac{2n}{t} $ is even, then 

\begin{equation*}
				\pdim(K[V(C_{2n}(a,n))]/I(C_{2n}(a,n)))=\left\{\begin{matrix}
				2n-\lceil\frac{n}{2t}\rceil \cdot t	, & \quad \quad \text{if}\, \, \frac{n}{t}\equiv 1\,(\mod 4) ;\\\\
					2n-\lceil\frac{n-t}{2t}\rceil \cdot t, &  \text{otherwise}.
				\end{matrix}\right.
			\end{equation*}
 \item[(b)] If $ \frac{2n}{t} $ is odd, then 

\begin{equation*}
				\pdim(K[V(C_{2n}(a,n))]/I(C_{2n}(a,n)))=\left\{\begin{matrix}
					2n-\lceil\frac{2n-t}{2t}\rceil \cdot \frac{t}{2}, & \quad \quad \text{if}\, \, \frac{2n}{t}\equiv 1\,(\mod 4) ;\\\\
					2n-\lceil\frac{n}{t}\rceil \cdot \frac{t}{2}, & \quad \quad \text{if}\, \, \frac{2n}{t}\equiv 3\,(\mod 4).
				\end{matrix}\right.
			\end{equation*}
 \end{itemize}
\end{Corollary}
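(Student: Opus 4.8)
The plan is to obtain this statement as an immediate consequence of Theorem \ref{cy1} together with the Auslander--Buchsbaum formula (Lemma \ref{auss13}), exactly mirroring the pattern used for the earlier corollaries attached to Lemma \ref{The1}, Lemma \ref{diamond}, Lemma \ref{dotfamily}, Proposition \ref{mobious} and Proposition \ref{circular}. The only structural observation needed at the outset is that the graph $C_{2n}(a,n)$ has exactly $2n$ vertices, so that $S=K[V(C_{2n}(a,n))]$ is a polynomial ring in $2n$ variables; hence, by the remark recorded in the excerpt, $\depth(S)=2n$.

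First I would apply Lemma \ref{auss13} to the finitely generated graded module $\mathcal{M}=S/I(C_{2n}(a,n))$ over $R=S$. Since $S/I(C_{2n}(a,n))$ is a nonzero module of finite projective dimension over the polynomial ring $S$, the formula gives
\begin{equation*}
\pdim\bigl(S/I(C_{2n}(a,n))\bigr)+\depth\bigl(S/I(C_{2n}(a,n))\bigr)=\depth(S)=2n,
\end{equation*}
so that $\pdim(S/I(C_{2n}(a,n)))=2n-\depth(S/I(C_{2n}(a,n)))$.

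It then remains to substitute the four depth values supplied by Theorem \ref{cy1}. In part (a), when $\frac{2n}{t}$ is even, the two subcases $\frac{n}{t}\equiv 1\,(\mod 4)$ and otherwise give $\depth=\lceil\frac{n}{2t}\rceil\cdot t$ and $\depth=\lceil\frac{n-t}{2t}\rceil\cdot t$ respectively, yielding the two displayed values $2n-\lceil\frac{n}{2t}\rceil\cdot t$ and $2n-\lceil\frac{n-t}{2t}\rceil\cdot t$. In part (b), when $\frac{2n}{t}$ is odd, the subcases $\frac{2n}{t}\equiv 1\,(\mod 4)$ and $\frac{2n}{t}\equiv 3\,(\mod 4)$ give $\depth=\lceil\frac{2n-t}{2t}\rceil\cdot\frac{t}{2}$ and $\depth=\lceil\frac{n}{t}\rceil\cdot\frac{t}{2}$, producing $2n-\lceil\frac{2n-t}{2t}\rceil\cdot\frac{t}{2}$ and $2n-\lceil\frac{n}{t}\rceil\cdot\frac{t}{2}$.

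There is no genuine obstacle in this argument: the content lies entirely in Theorem \ref{cy1}, and the corollary is a routine arithmetic transcription via a single application of Auslander--Buchsbaum. The only point deserving a line of care is the verification that the ambient ring has $2n$ variables so that $\depth(S)=2n$; once that is in place the four cases follow by direct substitution.
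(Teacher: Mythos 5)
Your proposal is correct and takes essentially the same route as the paper, whose entire proof is the single line that the result follows from Lemma \ref{auss13} and Theorem \ref{cy1}; your version just spells out the substitution $\pdim = 2n - \depth$ in each of the four cases. The one point you flag for care --- that $S=K[V(C_{2n}(a,n))]$ has exactly $2n$ variables so $\depth(S)=2n$ --- is exactly the implicit ingredient in the paper's argument, so nothing is missing.
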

\begin{proof}
    The proof follows by Lemma \ref{auss13} and Theorem \ref{cy1}.
\end{proof}

\begin{Theorem}\label{cy3}
    
 Let $n\geq 2,$ $ t = \gcd(2n,a) $ and  $ 1\leq a< n .$ \begin{itemize}
	\item[(a)] If $ \frac{2n}{t} $ is even, then 

\begin{equation*}
				\sdepth{(K[V(C_{2n}(a,n))]/I(C_{2n}(a,n)))}\geq \left\{\begin{matrix}
				\lceil\frac{n}{2t}\rceil \cdot t, & \quad \quad \text{if}\, \, \frac{n}{t}\equiv 1\,(\mod 4) ;\\\\
					\lceil\frac{n-t}{2t}\rceil \cdot t, & \text{otherwise}.
				\end{matrix}\right.
			\end{equation*}
 \item[(b)] If $ \frac{2n}{t} $ is odd, then 

\begin{equation*}
				\sdepth{(K[V(C_{2n}(a,n))]/I(C_{2n}(a,n)))}\geq\left\{\begin{matrix}
					\lceil\frac{2n-t}{2t}\rceil \cdot \frac{t}{2}, & \quad \quad \text{if}\, \, \frac{2n}{t}\equiv 1\,(\mod 4) ;\\\\
					\lceil\frac{n}{t}\rceil \cdot \frac{t}{2}, & \quad \quad \text{if}\, \, \frac{2n}{t}\equiv 3\,(\mod 4).
				\end{matrix}\right.
			\end{equation*}
 \end{itemize}
\end{Theorem}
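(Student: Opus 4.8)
The plan is to mirror the proof of Theorem \ref{cy1} almost verbatim, replacing each depth statement by its Stanley depth analogue and keeping careful track of the fact that Lemma \ref{LEMMA1.5} supplies only an inequality for Stanley depth (whereas it gives an equality for depth). First I would invoke Theorem \ref{rr} to reduce to the connected building blocks: when $\frac{2n}{t}$ is even, $C_{2n}(a,n)$ is a disjoint union of $t$ copies of $C_{\frac{2n}{t}}(1,\frac{n}{t})$, and when $\frac{2n}{t}$ is odd it is a disjoint union of $\frac{t}{2}$ copies of $C_{\frac{4n}{t}}(2,\frac{2n}{t})$, the inner parameter $\frac{2n}{t}$ being odd so that Proposition \ref{circularstan} applies. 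As in Remark \ref{ww}, the even subcase of Theorem \ref{rr}(b) need not be treated separately.

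A disjoint union of graphs on disjoint vertex sets has edge ideal equal to the sum of the edge ideals of its components inside the tensor product of the corresponding polynomial rings, so the quotient ring is the tensor product of the component quotient rings. By the Stanley depth part of Lemma \ref{LEMMA1.5}, the Stanley depth of such a tensor product is \emph{at least} the sum of the Stanley depths of the factors. This is precisely why the conclusion is a lower bound rather than an equality, in contrast to Theorem \ref{cy1}.

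For part (a) I would set $m=\frac{n}{t}\geq 2$ and feed each copy $C_{2m}(1,m)$ into Proposition \ref{mobiousstan}. When $m\equiv 1\,(\mod 4)$ that proposition gives $\sdepth=\lceil\frac{m}{2}\rceil$; in every other residue class it yields the lower bound $\lceil\frac{m-1}{2}\rceil$ (as an equality when $m\equiv 2$, and as the stated lower bound when $m\equiv 0,3$). Summing the $t$ equal contributions via Lemma \ref{LEMMA1.5} and rewriting $t\lceil\frac{m}{2}\rceil=\lceil\frac{n}{2t}\rceil\cdot t$ and $t\lceil\frac{m-1}{2}\rceil=\lceil\frac{n-t}{2t}\rceil\cdot t$ produces the two cases of part (a). For part (b) I would set $m=\frac{2n}{t}$ (odd) and apply Proposition \ref{circularstan} to each copy $C_{2m}(2,m)$: for $m\equiv 3\,(\mod 4)$ it gives the exact value $\lceil\frac{m}{2}\rceil$, and for $m\equiv 1\,(\mod 4)$ it gives the lower bound $\lceil\frac{m-1}{2}\rceil$. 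Summing the $\frac{t}{2}$ contributions and simplifying $\frac{t}{2}\lceil\frac{m-1}{2}\rceil=\lceil\frac{2n-t}{2t}\rceil\cdot\frac{t}{2}$ and $\frac{t}{2}\lceil\frac{m}{2}\rceil=\lceil\frac{n}{t}\rceil\cdot\frac{t}{2}$ gives part (b).

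The argument is essentially routine once Propositions \ref{mobiousstan}, \ref{circularstan} and Lemma \ref{LEMMA1.5} are in hand: no new short exact sequences or inductions are required at this stage, all the work having been done in the supporting propositions. The only point requiring care is the bookkeeping with the ceiling functions and residue classes, namely verifying that in each case the Stanley depth lower bound selected from the relevant proposition agrees with the closed-form expression obtained after multiplying by the number of isomorphic copies. I do not anticipate a genuine obstacle here.
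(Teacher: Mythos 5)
Your proposal is correct and follows essentially the same route as the paper's own proof: reduce via Theorem \ref{rr} (and Remark \ref{ww} for the even subcase of (b)) to disjoint copies of $C_{\frac{2n}{t}}(1,\frac{n}{t})$ or $C_{\frac{4n}{t}}(2,\frac{2n}{t})$, apply Propositions \ref{mobiousstan} and \ref{circularstan} to each copy, and combine with the Stanley depth inequality of Lemma \ref{LEMMA1.5}. Your ceiling-function simplifications ($t\lceil\frac{m-1}{2}\rceil=\lceil\frac{n-t}{2t}\rceil\cdot t$, etc.) and the observation that the tensor-product inequality is exactly why the result is only a lower bound both check out.
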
 \begin{proof}

Since $t=\gcd(2n,a),$ therefore if $\frac{2n}{t}$ is even, then $\frac{n}{t}$ is a positive integer  greater than or equals to 2. By 
Proposition \ref{mobiousstan}, we have 
  
\begin{equation*}
				\sdepth{(K[V(C_{\frac{2n}{t}}(1,\frac{n}{t}))]/I(C_{\frac{2n}{t}}(1,\frac{n}{t})))}\geq \left\{\begin{matrix}
				\lceil\frac{\frac{n}{t}}{2}\rceil, & \quad \quad \text{if}\, \, \frac{n}{t}\equiv 1\,(\mod 4) ;\\\\
					\lceil\frac{\frac{n}{t}-1}{2}\rceil , & \text{otherwise}.
				\end{matrix}\right.
			\end{equation*}
By using Theorem \ref{rr}, $C_{2n}(a,n)$ is isomorphic to $t$ copies of $C_{\frac{2n}{t}}(1,\frac{n}{t}),$ therefore by  Lemma \ref{LEMMA1.5}, 
   \begin{equation*}
				\sdepth{(K[V(C_{2n}(a,n))]/I(C_{2n}(a,n)))}\geq \left\{\begin{matrix}
				\lceil\frac{\frac{n}{t}}{2}\rceil \cdot t, &\quad \quad \text{if}\, \, \frac{n}{t}\equiv 1\,(\mod 4) ;\\\\
					\lceil\frac{\frac{n}{t}-1}{2}\rceil \cdot t, &    \text{otherwise}.
				\end{matrix}\right.
			\end{equation*} Similarly, if $\frac{2n}{t}$ is odd then  $\frac{2n}{t}$ is a positive integer strictly greater than 2. We get the required result just by replacing Proposition \ref{mobiousstan} with Proposition \ref{circularstan} and by Theorem \ref{rr},  $C_{2n}(a,n)$ is isomorphic to $\frac{t}{2}$ copies of $C_{\frac{4n}{t}}(2,\frac{2n}{t}). $ By using Remark \ref{ww}, it is enough to cater the cases when  $\frac{2n}{t}$ is odd, therefore by  Lemma \ref{LEMMA1.5},  we get
\begin{equation*}
				\sdepth{(K[V(C_{2n}(a,n))]/I(C_{2n}(a,n)))}\geq\left\{\begin{matrix}
					\lceil\frac{\frac{2n}{t}-1}{2}\rceil \cdot \frac{t}{2}, & \quad \quad \text{if}\, \, \frac{2n}{t}\equiv 1\,(\mod 4) ;\\\\
					\lceil\frac{\frac{2n}{t}}{2}\rceil \cdot \frac{t}{2}, & \quad \quad \text{if}\, \, \frac{2n}{t}\equiv 3\,(\mod 4).
				\end{matrix}\right.
			\end{equation*}
\end{proof}
\begin{Remark}
  \em{ Let $n\geq 2,$ $ t = \gcd(2n,a) $ and  $ 1\leq a< n .$ Then    Stanley’s inequality holds for $K[V(C_{2n}(a,n))]/I(C_{2n}(a,n)).$}
\end{Remark}

\end{document}